\documentclass[hidelinks,onefignum,onetabnum]{siamart220329}



\usepackage{lipsum}
\usepackage{amsfonts}
\usepackage{graphicx}
\usepackage{epstopdf}
\usepackage{algorithmic}
\ifpdf
  \DeclareGraphicsExtensions{.eps,.pdf,.png,.jpg}
\else
  \DeclareGraphicsExtensions{.eps}
\fi


\newsiamremark{remark}{Remark}
\newsiamremark{hypothesis}{Hypothesis}
\crefname{hypothesis}{Hypothesis}{Hypotheses}
\newsiamthm{claim}{Claim}

\headers{Floquet stability of periodic laser pulses}{V. Shinglot and J. Zweck}

\title{Floquet stability of periodically stationary pulses in a  short-pulse fiber laser\thanks{Submitted to the editors DATE.
\funding{This work was funded by the NSF under DMS-2106203}}}

\author{Vrushaly Shinglot\thanks{Department of Mathematical Sciences, The University of Texas at Dallas, Richardson, TX
  (\email{vrushaly92296@gmail.com,zweck@utdallas.edu)}.}
\and John Zweck\footnotemark[2] 
}

\usepackage{amsopn}


\usepackage{amsmath,amssymb}
\usepackage{physics}
\usepackage{verbatim}

\renewcommand\norm[1]{\left\lVert#1\right\rVert}
\usepackage{csquotes}
\newtheorem*{rem}{Remark}

\ifpdf
\hypersetup{
  pdftitle={Floquet stability of periodically stationary
pulses in a  short-pulse fiber laser},
  pdfauthor={V. Shinglot and J. Zweck}
}
\fi




\newcommand\StartRed{
  \begingroup           
 \color{red}            
  \let\EndRed\endgroup 
}

\begin{document}

\maketitle

\begin{abstract}
The quantitative modeling and design of modern short-pulse fiber lasers
cannot be performed with averaged models  because of large variations in the pulse parameters within each round trip. Instead, lumped models obtained by concatenating models for the various components of the laser are required. 
Since the optical pulses in lumped models are periodic,
their linear stability is investigated using the monodromy operator, which is the linearization of the roundtrip operator about the pulse.
A gradient-based optimization method is developed to 
discover periodic pulses.  The computation of the gradient of the objective function involves numerical computation of the action of both the round trip operator and the adjoint of the monodromy operator.
A novel Fourier split-step method is introduced to compute solutions
of the linearization of the nonlinear, nonlocal, stiff equation
that models optical propagation in the fiber amplifier.
This method is derived by  linearizing the two solution operators
in a split-step method for the nonlinear equation.
The spectrum of the monodromy operator consists of the essential
spectrum, for which there is an analytical formula, and the eigenvalues. 
There is a multiplicity two eigenvalue at $\lambda=1$, which is
 due to phase and translation invariance. 
The remaining eigenvalues are determined from a matrix discretization of the monodromy operator. 
Simulation results verify the accuracy of the numerical methods, show examples of periodically stationary pulses, their spectra and eigenfunctions, and discuss their stability.
\end{abstract}

\begin{keywords}
Fiber lasers, Floquet stability analysis,  monodromy operator, nonlinear optics,
split-step methods
\end{keywords}

\begin{MSCcodes}
35B10, 35Q56, 37L15, 47D06, 78A60
\end{MSCcodes}

\section{Introduction}

Since the advent of the soliton laser~\cite{mollenauer1984soliton}, researchers have invented 
several generations of short pulse fiber lasers including dispersion-managed lasers~\cite{kim2014sub,tamura199377},
similariton lasers \cite{fermann2000self, hartl2007cavity},
and the  Mamyshev oscillator \cite{rochette2008multiwavelength,sidorenko2018self,tarasov2016mode}. 
The pulses in these lasers typically have durations on the order of 100~fs,
peak powers on the order of 1~MW, and energy in the 1-50~nJ range.
Applications of femtosecond laser technology include frequency comb generation,
highly accurate measurement of time, frequency, and distance, optical waveform generation, and laser surgery~\cite{diddams2010evolving,fu2018several}.

Traditionally, the modeling  of short pulse lasers has been
based on averaged models,
 in which each of the physical effects that act on the light pulse
is averaged over one round trip of the laser loop to obtain a constant coefficient
partial differential equation such as the cubic-quintic complex Ginzburg-Landau
equation (CQ-CGLE) or the Haus master equation (HME) (see \cite{kutz2006mode} for a review).
This approach has been successfully applied to
soliton lasers for which the pulse  maintains its shape as it propagates.
However, as is highlighted by Turitsyn et al.~\cite{turitsyn2012dispersion}, averaged models cannot be used for
the quantitative modeling and design of  recent generations of 
short pulse lasers due to large variations in the pulse within each round trip.

Instead, the computational modeling of modern short pulse lasers should 
be based on lumped models obtained by concatenating models for the various
components of the laser.  
Typically, short pulse lasers   include an optical fiber amplifier, segments
of  single-mode fiber, a  saturable absorber, a dispersion compensating 
element, a spectral filter, and an output coupler.  
With a lumped model, the pulse  changes shape as it propagates through the various components of the laser system, returning to the same shape
once per round trip. We call such pulses  periodically stationary to distinguish them from the stationary pulses in a soliton laser.

Building on  work of Kaup~\cite{kaup1990perturbation} and Haus~\cite{haus1975theory,haus2000mode},
Menyuk~\cite{menyuk2016spectral}  developed a computational approach to the modeling of stationary pulse solutions of averaged models. 
With this method, stationary pulses are found using a root finding method and their linear stability is determined by computing the spectrum of the
linearization of the governing equation about the pulse. 
While there is an analytical formula for the essential spectrum,
the eigenvalues must be numerically computed, either by solving a 
(possibly nonlinear) eigenproblem
involving a matrix discretization of the differential operator~\cite{shen2016spectra,wang2014boundary}, or by using 
Evans function methods~\cite{bridges2002stability,SINUM53p2329,kapitula2004evans,kapitula1998instability,kapitula1998stability}.  

In this paper, we extend this approach to  periodically stationary pulses in lumped laser models. To keep the presentation concrete, we focus on a
dispersion-managed laser of Kim et al~\cite{kim2014sub}. 
However, the methodology can readily be adapted to other lumped laser models.
First, in \cref{sec:Model}, we describe our lumped model of the Kim laser. The
single-mode fiber segments are modeled by the nonlinear Schr\"odinger equation
(NLSE) and the fiber amplifier is modeled  by the HME, which
is a generalization of the NLSE that includes a nonlocal saturable gain term.
We also introduce the round trip operator, $\mathcal R$, which models 
propagation once around the laser loop, and define a pulse, $\psi$, to be periodically stationary if $\mathcal R \psi= e^{i\theta}\psi$, for some constant phase $\theta$. 
In \cref{sec:Linearization of the Round Trip Operator}, we introduce the monodromy operator, $\mathcal M$, which is the
linearization of $\mathcal R$ about a pulse, $\psi$. 
We formulate the equations for the linearization of each component
of the model, focusing special attention on the linearization of the  HME. Because the nonlinear partial differential equations in the model involve 
the complex conjugate of $\psi$, we choose to define 
$\mathcal M$ to act on $\mathbb R^2$-valued  functions, which should be thought
of as the real and imaginary parts of  $\mathbb C$-valued functions. 

In \cref{sec:PeriodicPulse}, we develop a computational 
method for discovering periodically stationary pulses.
This method, which involves using gradient-based optimization 
to minimize the $L^2$-error between $\mathcal R \psi$ and $e^{i\theta}\psi$,
is an adaptation of a method of Ambrose and Wilkening for computing periodic solutions of partial differential equations~\cite{ambrose2012computing}.
In particular, we provide an analytical formula for the optimal
phase, $\theta$, in terms of the optimal pulse, $\psi$. 
The computation of the gradient of the objective function involves
numerical computation of the action  of both 
the round trip operator, $\mathcal R$, and 
the adjoint, $\mathcal M^*$, of the monodromy operator.

In \cref{sec:SplitStep}, 
we describe the Fourier split-step methods we use to solve the
HME and its linearization. 
For the HME,  we use a method
of Wang et al~\cite{wang2013comparison} designed to handle the frequency filtering term  in the equation,  which is nonlinear, nonlocal, and stiff.
In particular, we provide formulae for
 locally third-order accurate solution operators
for the two steps in the method.
Then, we derive the split-step method for the linearized equation by linearizing these two solution operators. To the best of our knowledge, this approach is novel even in the special case of the linearized NLSE. Finally, the solver for the linearization
is then used to obtain one for its adjoint. The derivation of these methods is not completely straightforward  due to  the nonlocal nature of the saturable gain term in the HME.

In analogy with the Floquet theory of periodic
solutions of ordinary differential equations~\cite{teschl2012ordinary}, 
we expect that the linear stability of a periodically stationary pulse will be determined by the spectrum of the monodromy operator.
The spectrum of $\mathcal M$ is the union of the essential spectrum and the
eigenvalues. In~\cite{shinglot2023essential}, we derived a formula for the essential spectrum.
In \cref{sec:Spectrum}, we show that the monodromy operator has a multiplicity
two eigenvalue at $\lambda=1$, which is due to the phase and time-translation
invariances of $\mathcal R$. These eigenvalues are analogous to the well-known  
eigenvalues of stationary pulses at $\lambda=0$~\cite{kaup1990perturbation}.
As in~\cite{cuevas2017floquet,shinglot2022continuous}, we determine the remaining  eigenvalues from a matrix discretization of $\mathcal M$. 
Finally, in \cref{sec:SimResults} we present simulation results that verify the accuracy of the numerical methods, show examples of periodically stationary pulses, their spectra and eigenfunctions, and discuss their stability.

Some of the results in this paper were announced in~\cite{shinglot2023essential,shinglot2022continuous}. However, the spectra shown here are shown for a new modified version of the monodromy operator introduced in \cref{sec:Spectrum}. 

\section{Mathematical Model}
\label{sec:Model}

In the left panel of \cref{fig:Short pulse laser model}, we show a system diagram 
for the lumped model of the stretched pulse laser of Kim et al.~\cite{kim2014sub}. A light pulse circulates around the loop, passing through a saturable absorber (SA), a segment of single mode fiber (SMF1), a fiber amplifier (FA), a second segment of single mode fiber (SMF2), a dispersion compensation element (DCF), and an output coupler (OC).
After several round trips, the light circulating in the loop forms into a pulse 
that changes shape as it propagates through the different components,
returning to the same shape each time it returns to the same position in the loop.
In the right panel of Figure~\ref{fig:Short pulse laser model} we show the profile
of such a periodically stationary pulse at the output of each component.
The goal of this paper is to study the spectral stability of such 
periodically stationary pulses in lumped  models of fiber lasers.

\begin{figure}[H]
    \centering
    \includegraphics[width=0.3\linewidth]{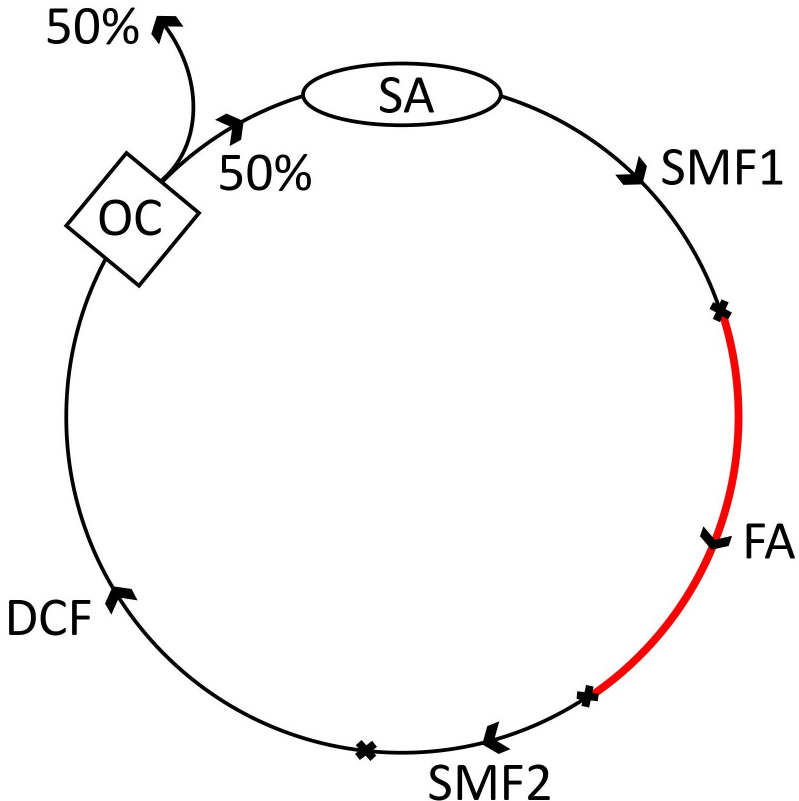}
    \hskip40pt
    \includegraphics[width=0.35\linewidth]{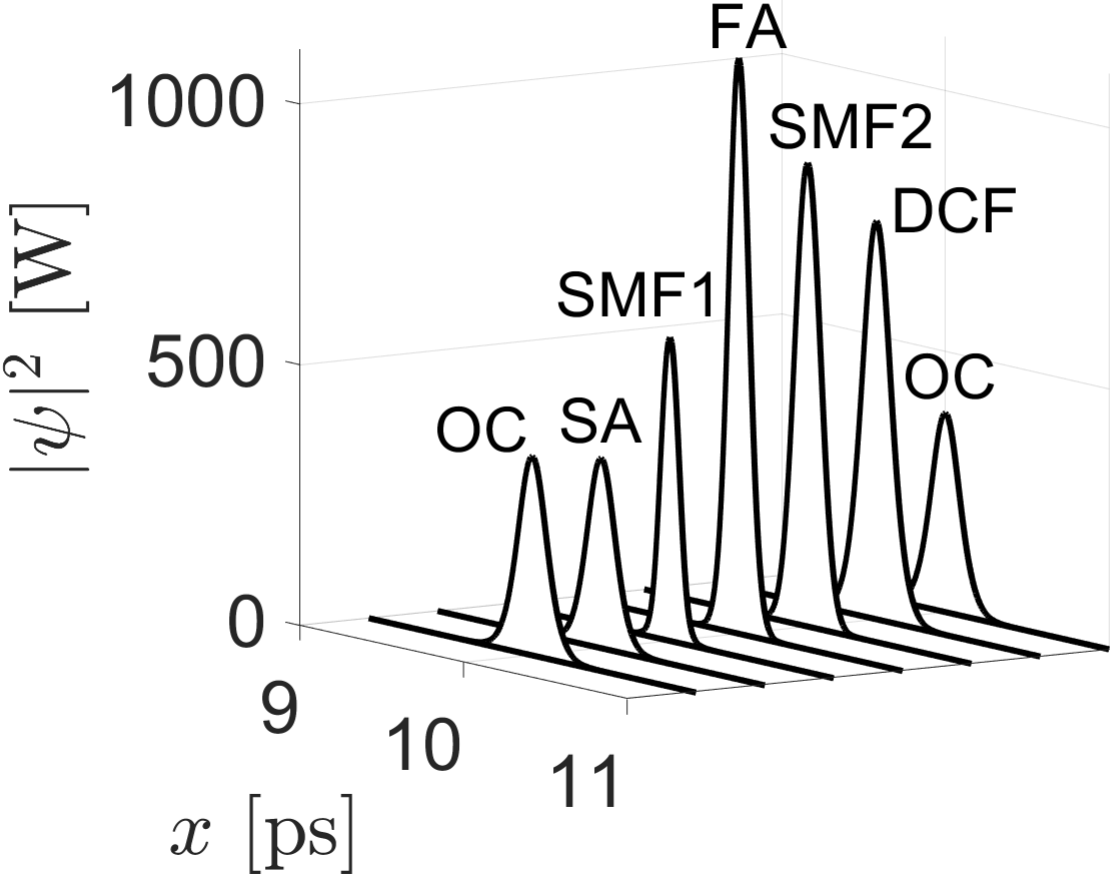}
    \caption{Left: System diagram of the stretched pulse laser of Kim et al.~\cite{kim2014sub}.
    Right: Instantaneous power of the periodically stationary pulse exiting each component of the laser.
    }
    \label{fig:Short pulse laser model}
\end{figure}

At each position in the loop, we model the complex electric field envelope of the light as a function, $\psi = \psi(x)$.
 Physically speaking,  $x$ is a fast-time variable
defined relative to a frame moving at the group
velocity~\cite{menyuk2016spectral,yang2010nonlinear}.
Since the length of the optical fiber in the loop is  on the order of 1~m
and the loop contains a single pulse with  duration  on the order
of 100~fs, the pulse duration is about one ten-thousandth of the round trip
time. Consequently,  it is reasonable to assume that the fast-time variable,
$x$, varies over the entire real line, $\mathbb R$, rather than being periodic. Of course, in numerical computations, we truncate $\mathbb R$ 
to a finite interval.
The pulse is normalized so that $|\psi(x)|^2$ is the instantaneous power. 
 We assume that the function, $\psi$, is an element of the Lebesgue space, $L^2(\mathbb{R}, \mathbb{C})$,
of square integrable, complex-valued functions on $\mathbb R$.
We model each component of the laser as a transfer function, 
$\mathcal{P}:  L^2(\mathbb{R}, \mathbb{C})\rightarrow L^2(\mathbb{R}, \mathbb{C})$, so that
\begin{equation}
    \psi_{\text{out}} = \mathcal{P} \psi_{\text{in}},
    \label{eq:Transfer function}
\end{equation}
where $\psi_{\text{in}}$ and $\psi_{\text{out}}$ are the pulses entering and exiting the component. 
The components in the model come in two flavors: discrete and continuous.  
By a discrete component we mean one in which the action of the operator, 
$\mathcal P$, on the input pulse, $\psi_{\text{in}}$,  is  essentially
obtained in one step, for example by the application of an 
explicit formula. In our model of the Kim laser,  the discrete components are the
saturable absorber, dispersion compensation  element,  and  output coupler.
Short-pulse fiber lasers sometimes also  include a spectral filter
that is modeled as a discrete component. 
By a  continuous component, we mean one in which the action 
of the operator, $\mathcal P$, on the input pulse, $\psi_{\text{in}}$,
is modeled by solving a nonlinear wave equation with initial condition,
$\psi_{\text{in}}$, from the input to the output of the component.
In fiber lasers the continuous components are those 
that involve the propagation of a light pulse through 
a segment of nonlinear optical fiber. For our model of the Kim laser these are
the fiber amplifier and the two segments of single mode fiber.
Note that we have chosen to model the dispersion compensation  element as 
a discrete component, since it is modeled by a constant-coefficient 
linear partial differential equation  which has an analytical solution in the Fourier domain.

With a lumped model, the propagation of a light pulse once around the laser loop
is modeled by the round trip operator, $\mathcal{R}: L^2(\mathbb{R}, \mathbb{C}) \rightarrow L^2(\mathbb{R}, \mathbb{C})$, which is given by the 
composition of the transfer functions of all the components. 
For our model of the Kim laser,  the round trip operator is given by 
\begin{equation}
    \mathcal{R} = \mathcal{P}^{\text{OC}} \circ \mathcal{P}^{\text{DCF}}\circ\mathcal{P}^{\text{SMF2}} \circ\mathcal{P}^{\text{FA}}\circ\mathcal{P}^{\text{SMF1}} \circ \mathcal{P}^{\text{SA}}.
    \label{eq:Round trip operator}
\end{equation}
We say that $\psi_0 \in L^2(\mathbb{R}, \mathbb{C})$ 
is a \emph{periodically stationary pulse} if 
\begin{equation}
 \mathcal{R}(\psi_0) = e^{i\theta} \psi_0,
 \label{eq:PeriodicPulse}
\end{equation}
for some constant phase, $\theta \in [0,2\pi )$. 
For the Kim laser, $\psi_0$ is the pulse at the input to the saturable absorber. For each component, we let $\psi_{\text{in}}$ denote the pulse obtained by propagating the periodically stationary pulse, $\psi_0$, from 
the input of the SA  to the input of that component.
For the continuous fiber 
components we let $\psi$ denote the pulse propagating through that fiber. 

We now describe the model for the propagation of  a light pulse, $\psi= \psi(t,x)$, through the fiber amplifier. 
Here $t$ denotes position along the fiber, with $0\leq t \leq L_{\text{FA}}$, where $L_{\text{FA}}$ is the length of the fiber
amplifier.  We note that  $t$ is a local evolution variable that is 
only defined within the fiber amplifier.  
Our model for propagation in the fiber amplifier is based on the Haus master equation~\cite{haus1975theory}, which is a generalization of the NLSE that includes gain that saturates at high energy and is of finite bandwidth. Specifically, we model the transfer function, $\mathcal{P}^{\text{FA}}$, of a fiber amplifier of length, $L_{\text{FA}}$, as
 $   \psi_{\text{out}} = \mathcal{P}^{\text{FA}}\psi_{\text{in}}$,
 where $\psi_{\text{out}} = \psi({L}_{\text{FA}},\cdot)$ is  obtained by solving the initial value problem
\begin{equation}
\begin{aligned}
    \partial_t \psi 
    &= \left[ \frac{ g(\psi)}{2} \left(1+ \frac{1}{\Omega_g^2}\partial_x^2
    \right) 
    - \tfrac{i}{2}\beta_{\text{FA}} \partial_x^2 
    + i\gamma|\psi|^2\right]\,\psi,
    \qquad \text{for } 0\leq t \leq L_{\text{FA}},
    \\
    \psi(0,\cdot) &= \psi_{\text{in}}.
    \label{eq:fiber amplifier}
\end{aligned}
\end{equation}
Here, $g(\psi)$ is the saturable gain given by
\begin{equation}
    g(\psi) {}={} \frac{g_0}{1+E(\psi)/E_{\text{sat}}},
    \label{eq:Saturable gain}
\end{equation}
where $g_{0}$ is the unsaturated gain, $E_{\text{sat}}$ is the saturation energy, and $E(\psi)$ is the pulse energy, which is given by 
\begin{equation}
    E(\psi) = \int_{\mathbb{R}} |\psi(\cdot,x)|^2 dx.
    \label{eq:PulseEnergy}
\end{equation}
The saturable gain is modeled as a nonlocal function of the  
fast-time variable, $x$, since the response time of the fiber amplifier is 
on the order of 1~ms~\cite{mu2000comparison},
which is much longer than the  pulse duration, which  is on the order of 100~fs.
We also observe that the energy and saturable gain depend on the evolution variable, $t$, since $\psi$ does.
The finite bandwidth of the amplifier is modeled using a Gaussian filter with bandwidth, $\Omega_{g}$. In \eqref{eq:fiber amplifier}, $\beta_{\text{FA}}$ is the chromatic dispersion coefficient and $\gamma$ is the nonlinear Kerr coefficient.

Similarly, we model the transfer function, $\mathcal{P}^{\text{SMF}}$, of a segment of single mode fiber of length, $L_{\text{SMF}}$, as
 $   \psi_{\text{out}} = \mathcal{P}^{\text{SMF}}\psi_{\text{in}}$,
 where $\psi_{\text{out}} = \psi({L}_{\text{SMF}},\cdot)$ is  obtained by solving the initial value problem for the NLSE given by
\begin{equation}
\begin{aligned}
    \partial_t \psi 
    &= 
    - \tfrac{i}{2}\beta_{\text{SMF}}  \partial_x^2 \psi
    + i\gamma|\psi|^2 \psi,
    \qquad \text{for } 0\leq t \leq L_{\text{SMF}},
    \\
    \psi(0,\cdot) &= \psi_{\text{in}}.
    \label{eq:SMF_JZ}
\end{aligned}
\end{equation}
We model the dispersion compensation element as $\mathcal P_{\text{DCF}}
= \mathcal F^{-1} \circ \widehat{\mathcal P}^{\text{DCF}} \circ \mathcal F$, where 
$\mathcal F$ is the Fourier transform and 
 \begin{equation}
 \widehat{\psi}_{\text{out}} (\omega) =    (\widehat{\mathcal{P}}^{\text{DCF}} \widehat{\psi}_{\text{in}})(\omega) = 
    \exp \left( i {\omega^2} \beta_{\text{DCF}}/2  \right)
    \widehat{\psi}_{\text{in}}(\omega),
    \label{eq:DCF_JZ}
\end{equation}
with $\widehat{\psi} = \mathcal F (\psi)$.
We observe that \eqref{eq:DCF_JZ} is the solution to the  initial value problem for the
linear equation obtained by setting $\gamma = 0$, $\beta_{\text{SMF}}=
\beta_{\text{DCF}}$ and $L_{\text{SMF}}=1$ in \eqref{eq:SMF_JZ}.

We model the saturable absorber using the fast saturable loss transfer function~\cite{wang2016comparison}, $\mathcal{P}^{\text{SA}}$, given by
\begin{equation}
    \psi_\text{out} = \mathcal{P}^{\text{SA}}(\psi_{\text{in}}) = \left( 1 - \frac{\ell_0}{1+|\psi_\text{in}|^2/ P_{\text{sat}}}\right)\psi_\text{in},\\
    \label{eq:Saturable absorber}
\end{equation}
where $\ell_0$ is the unsaturated loss and $P_{\text{sat}}$ is the saturation power. With this model,  $\psi_\text{out}$ at  $x$  only depends on $\psi_\text{in}$ at the same value of $x$. Finally, we model the transfer function, $\mathcal{P}^{\text{OC}}$, of the output coupler as 
\begin{equation}
    \psi_{\text{out}} = \mathcal{P}^{\text{OC}}\psi_{\text{in}}
    = \ell_{\operatorname{OC}}\,\psi_{\text{in}},
    \label{eq:Output coupler}
\end{equation}
where  $ (\ell_{\operatorname{OC}})^2$ is the power loss at the output coupler.


\section{Linearization of the Round Trip Operator}
\label{sec:Linearization of the Round Trip Operator}

In this section, we derive formulae for  the 
linearizations, $\mathcal{U}$, about a  pulse 
of each of the  operators, 
$\mathcal{P}$, defined in Section~\ref{sec:Model}. 
By the chain rule, 
the linearization, $\mathcal{M}$, of the round trip operator, $\mathcal{R}$, 
about a periodically stationary pulse, $\psi_0$,
is equal to the composition of the linearized transfer functions, $\mathcal{U}$, 
of each component of the system, i.e., 
\begin{equation}
    \mathcal{M} = \mathcal{U}^{\text{OC}} \circ \mathcal{U}^{\text{DCF}} \circ \mathcal{U}^{\text{SMF2}} \circ \mathcal{U}^{\text{FA}} \circ \mathcal{U}^{\text{SMF1}} \circ \mathcal{U}^{\text{SA}}.
    \label{eq:Linearization of the round trip operator}
\end{equation}
In analogy with the monodromy matrix  in the Floquet theory of periodic solutions of ODE's~\cite{teschl2012ordinary}, we call  $\mathcal M$ the \emph{monodromy operator} of the linearization
of the round trip operator, $\mathcal R$,
 about the periodically stationary pulse, $\psi_0$. 
 In~\cite{shinglot2023essential}, we provide conditions on the smoothness
and decay of the pulse which ensure that the monodromy operator exists on an appropriate Lebesgue function space.

Because the linearization of the partial differential equations in the model involves the complex conjugate of $\psi$, we reformulate the model as a system of equations for the column vector $\boldsymbol{\psi} = [\Re(\psi), \Im(\psi)]^T \in \mathbb R^2$. 
For example, the transfer function of the fiber amplifier
is reformulated as the operator,
 $\mathcal{P}^{\text{FA}} :  L^2(\mathbb{R}, \mathbb{R}^2) \to  L^2(\mathbb{R}, \mathbb{R}^2)$, given by
$ \boldsymbol{\psi}_{\text{out}} = \mathcal{P}^{\text{FA}}\boldsymbol{\psi}_{\text{in}}$, 
where $\boldsymbol{\psi}_{\text{out}} = \boldsymbol{\psi}(\text{L}_{\text{FA}},\cdot)$ is  obtained by solving the initial value problem
\begin{equation}
\begin{aligned}
    \partial_t \boldsymbol{\psi} 
    &= \left[ \tfrac{ g(\boldsymbol{\psi})}{2} \left(1+ \tfrac{1}{\Omega_g^2}\partial_x^2
    \right) 
    - \tfrac{\beta}{2} \textbf{J} \partial_x^2 
    + \gamma \norm{\boldsymbol{\psi}}^2 \textbf{J} \right]\,\boldsymbol{\psi},
    \\
    \boldsymbol{\psi}(0,\cdot) &= \boldsymbol{\psi}_{\text{in}},
    \label{eq:fiber amplifier new}
\end{aligned}
\end{equation}
where $\textbf{J} = \begin{bmatrix} 0&-1\\1&0 \end{bmatrix}$, and $\norm{\cdot}$ is the standard Euclidean norm on $\mathbb{R}^2$.

The linearized transfer function, $\mathcal{U}^{\text{FA}}:  L^2(\mathbb{R}, \mathbb{R}^2) \to  L^2(\mathbb{R}, \mathbb{R}^2)$, in the fiber amplifier 
is given by $\boldsymbol{u}_{\text{out}} = \mathcal{U}^{\text{FA}}\boldsymbol{u}_{\text{in}}$, 
where $\boldsymbol{u}_{\text{out}} = \boldsymbol{u}({L}_{\text{FA}},\cdot)$ is obtained by solving the linearized initial value problem
\begin{equation}
    \begin{aligned}
    \partial_t \boldsymbol{u} &= 
    \mathcal L^{\text{FA}}(\boldsymbol\psi)(\mathbf u) = 
    \left[ g(\boldsymbol{\psi})\mathcal{K} + \mathcal{L} + \mathcal{M}_1(\boldsymbol{\psi}) + \mathcal{M}_2(\boldsymbol{\psi})
    + \mathcal{P}(\boldsymbol{\psi})
    \right] \boldsymbol{u},   \text{ for } 0\leq t \leq L_{\text{FA}},
    \\
    \boldsymbol{u}(0,\cdot) &= \boldsymbol{u}_{\text{in}},
    \end{aligned}
    \label{eq:FA linearized equation}
\end{equation}
where
\begin{equation}
    \begin{aligned}
    \mathcal{K} &= \tfrac{1}{2}\left(1 + \tfrac{1}{\Omega_{g}^{2}}\partial_x^2\right), 
    \qquad 
    &\mathcal{L} &= -\tfrac{\beta}{2}\textbf{J}\partial_{x}^2,\\
    \mathcal{M}_1(\boldsymbol{\psi}) &= \gamma\norm{\boldsymbol{\psi}}^2\textbf{J},
    \qquad 
    &\mathcal{M}_2(\boldsymbol{\psi}) &= 2\gamma\textbf{J}\boldsymbol{\psi}\boldsymbol{\psi}^T,
    \end{aligned}
    \label{eq:Operators in linearized equation}
\end{equation}
and 
\begin{equation}
\mathcal{P}(\boldsymbol{\psi}) \boldsymbol{u} = -\tfrac{g^{2}(\boldsymbol{\psi})}{g_{0}E_{\text{sat}}}\left[\left(1 + \tfrac{1}{\Omega_{g}^{2}}\partial_x^2\right)\boldsymbol{\psi}\right]\int_{-\infty}^{\infty} \boldsymbol{\psi}^T(x) \boldsymbol{u}(x) dx
\label{eq:OperatorP}
\end{equation}
is a nonlocal operator. 
The non-locality of $\mathbf P$, which arises because the gain saturation depends on the total energy of the pulse, makes the analysis more challenging for the fiber
amplifier than for a segment of single mode fiber.
The linearized transfer function, $\mathcal{U}^{\text{SMF}}$, of a segment of single mode fiber  is obtained by setting $g(\boldsymbol\psi) = 0$  in
\eqref{eq:FA linearized equation} and \eqref{eq:OperatorP}.

The linearized transfer function, $\mathcal{U}^{\text{SA}}$,  for the saturable absorber is given by
\begin{equation}
    \boldsymbol{u}_{\text{out}} = \mathcal{U}^{\text{SA}}(\boldsymbol{\psi}_{\text{in}}) \boldsymbol{u}_{\text{in}} = \left( 1 - \ell(\boldsymbol{\psi}_{\text{in}}) - \frac{2 \ell^2(\boldsymbol{\psi}_{\text{in}})}{\ell_0 P_{\text{sat}}} \boldsymbol{\psi}_{\text{in}} \boldsymbol{\psi}_{\text{in}}^T \right) \boldsymbol{u}_{\text{in}},
    \label{eq:SA linearized transfer function}
\end{equation}
where
\begin{equation}
    \ell(\boldsymbol{\psi}_\text{in}) = \frac{\ell_0}{1+\norm{\boldsymbol{\psi}_{\text{in}}}^2/ P_{\text{sat}}}.
    \label{eq:l(psi) in SA transfer function}
\end{equation}
The remaining components, i.e. dispersion compensation fiber and output coupler, already have linear transfer functions, and so $\mathcal U^{\text{DCF}} = \mathcal P^{\text{DCF}}$ and $\mathcal U^{\text{OC}} = \mathcal P^{\text{OC}}$.

Because eigenvalues and eigenfunctions can be complex valued, we extend the linearized system to act on complex-valued functions, $\boldsymbol{u} \in L^2(\mathbb{R}, \mathbb{C}^2)$, where 
\begin{equation}
    L^2(\mathbb{R}, \mathbb{C}^2) = \{ \boldsymbol{u} = \boldsymbol{v} + i \boldsymbol{w} \,\,: \,\, \boldsymbol{v}, \boldsymbol{w} \in L^2(\mathbb{R}, \mathbb{R}^2) \},
\end{equation}
is the space of $\mathbb{C}^2$-valued functions on $\mathbb{R}$ with the standard Hermitian inner product.
Let $\mathcal T$ be an operator that acts on  $\mathbb{R}^2$-valued functions.
We extend $\mathcal T$ to act on $\mathbb{C}^2$-valued functions by defining
$  \mathcal{T} \boldsymbol{u} = \mathcal{T} \boldsymbol{u}_1 + i \mathcal{T} \boldsymbol{u}_2$,
where $\boldsymbol{u} = \boldsymbol{u}_1 + i \boldsymbol{u}_2$ with $\boldsymbol{u}_1, \boldsymbol{u}_2 \in L^2(\mathbb{R}, \mathbb{R}^2)$. 
Note that the formulae above 
for the action of the differential operators and transfer functions 
on $\mathbb{C}^2$-valued functions, $\boldsymbol{u}$,
are the same as for their action
on $\mathbb{R}^2$-valued functions, since in both cases we only require
$\boldsymbol\psi$ to be  $\mathbb{R}^2$-valued. The only difference is our
interpretation of the function spaces on which they act.

\section{Computation of Periodically Stationary Pulses}
\label{sec:PeriodicPulse}

We formulate the problem of finding periodically stationary pulses as that of finding a zero of the Poincar\'e map functional, 
  $\mathcal{E}:L^2(\mathbb{R}, \mathbb{R}^2) \times [0,2\pi ) \rightarrow\mathbb{R}$, 
 given by
\begin{equation}
    \mathcal{E}(\boldsymbol\psi_0, \theta) = \frac{1}{2} \norm{\mathcal{R}(\boldsymbol\psi_0) - \mathbf R(\theta) \boldsymbol\psi_0}^2_{L^2(\mathbb{R}, \mathbb{R}^2)},
    \label{eq:Poincare map functional}
\end{equation}
where $\mathbf R(\theta)$ is the  rotation matrix on $\mathbb R^2$ that corresponds to the operator of
multiplication by $e^{i\theta}$ on $\mathbb C$.\footnote{We note that, for a given set of system parameters, there is no guarantee that 
a periodically stationary pulse exists.}

Next,  we describe the two-stage method 
we use to compute periodically stationary pulse solutions, $\boldsymbol\psi_0$, of the laser system model in \cref{sec:Model}. In the first (evolutionary) stage we propagate 
a Gaussian pulse over sufficiently many round trips of the laser to obtain a good
initial guess for the second (optimization) stage. In the optimization stage, we 
use a gradient-based  method to minimize the objective function
given by the ratio of the Poincar\'e map functional~\eqref{eq:Poincare map functional} and the pulse energy, \eqref{eq:PulseEnergy},
\begin{equation}
\widetilde{\mathcal E}(\psi_0,\theta)\,\,= \,\, \frac{\mathcal E(\psi_0,\theta)}{E(\psi_0)}.
\label{eq:PoinObjFun}
\end{equation}
We note that if $\psi_0$ is a nonzero periodically stationary pulse, then
there is a $\theta$ so that
$\widetilde{\mathcal E}$ has a global minimum value of zero 
at $(\psi_0,\theta)$.
Therefore, to find nontrivial periodically stationary pulses it makes sense to use an optimization algorithm to drive $\widetilde{\mathcal E}$ to zero.
In parameter continuation studies, the first stage
can be omitted if the optimal pulse computed with the previous set of system parameters is a good enough initial guess for optimization with the current
set of parameters. 

In the following theorem,  we adapt a method of Ambrose and Wilkening~\cite{ambrose2012computing} 
for computing the gradient of $\mathcal E$ with respect to the pulse.
With this method, the cost of
computing a directional derivative of $\mathcal E$ is comparable to that
of propagating a pulse and its linearization for one round trip of the laser. 

\begin{theorem}
\label{thm:GradientPoincareMap}
The variational derivative of $\mathcal{E}$ with respect to $\boldsymbol\psi_0$ is given by
\begin{equation}
    D_{\boldsymbol\psi_0} \mathcal{E}(\mathbf u_0) =  \left \langle \frac{\delta \mathcal{E}}{\delta \boldsymbol\psi_0}, \,\, \mathbf  u_0 \right \rangle_{L^2(\mathbb{R}, \mathbb{R}^2)},
    \label{eq:Variational derivative of Poincare map}
\end{equation}
where
\begin{equation}
    \frac{\delta \mathcal{E}}{\delta \boldsymbol\psi}(\boldsymbol\psi_0) = \mathcal{M}^*(\mathbf v_0) - \mathbf R(-\theta) \mathbf v_0,
    \label{eq:Gradient of Poincare map wrt pulse}    
\end{equation}
where $\mathbf  v_0 := \mathcal{R}(\boldsymbol\psi_0) - \mathbf R(\theta) \boldsymbol\psi_0$ 
 is a measure of how far $\boldsymbol\psi_0$ is from being periodically stationary,
and  the adjoint of $\mathcal M$ is given by
\begin{equation}
    \mathcal{M}^* = \left(\mathcal{U}^{\emph{SA}}\right)^* \circ \left(\mathcal{U}^{\emph{SMF1}}\right)^* \circ \left(\mathcal{U}^{\emph{FA}}\right)^* \circ \left(\mathcal{U}^{\emph{SMF2}}\right)^* \circ \left(\mathcal{U}^{\emph{DCF}}\right)^* \circ \left(\mathcal{U}^{\emph{OC}}\right)^*,
\end{equation}
where for each component $\mathcal{U}^*$ is the adjoint of the corresponding operator $\mathcal{U}$.

In a fiber segment of length, $L$,  the adjoint of the linearized solution operator, $\mathcal U$, for the fiber is given by
\begin{equation}
    \mathbf  v_L = \mathcal{U}^* \mathbf v_0,
\end{equation}
with $\mathbf v_L = \mathbf v(L,\cdot)$. Here, $\mathbf v = \mathbf v(s,\cdot)$ is obtained by solving the adjoint linearized initial value problem given by
\begin{equation}
\begin{aligned}
     \partial_s \mathbf v(s,\cdot) &= \mathcal{L}^*(\boldsymbol\psi(L-s,\cdot)) 
    \mathbf v(s,\cdot),\\
     \mathbf v(0,\cdot) &= \mathbf v_0,
\end{aligned}
\label{eq:Adjoint linearized IVP}
\end{equation}
where $\mathcal{L}^*(\boldsymbol\psi)$ is the adjoint of the linearized differential operator $\mathcal{L}(\boldsymbol\psi)$,  as in \eqref{eq:FA linearized equation}. 
\end{theorem}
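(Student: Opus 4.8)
The plan is to write the functional as a quadratic form composed with a residual map and differentiate by the chain rule, transferring each linear operator onto the residual through its adjoint. Set $\mathbf v_0 = \mathcal R(\boldsymbol\psi_0) - \mathbf R(\theta)\boldsymbol\psi_0$, so that $\mathcal E(\boldsymbol\psi_0,\theta) = \tfrac12\langle \mathbf v_0,\mathbf v_0\rangle_{L^2(\mathbb R,\mathbb R^2)}$. First I would compute the directional derivative of $\mathcal E$ in a direction $\mathbf u_0$. Since $\mathcal M$ is by definition the linearization of $\mathcal R$ about $\boldsymbol\psi_0$, and $\mathbf R(\theta)$ is linear, the derivative of the residual in the direction $\mathbf u_0$ is $\mathcal M\mathbf u_0 - \mathbf R(\theta)\mathbf u_0$, and hence
\[
 D_{\boldsymbol\psi_0}\mathcal E(\mathbf u_0)
 = \langle \mathbf v_0,\; \mathcal M\mathbf u_0 - \mathbf R(\theta)\mathbf u_0\rangle_{L^2(\mathbb R,\mathbb R^2)}.
\]
Transferring $\mathcal M$ and $\mathbf R(\theta)$ onto the first slot via their adjoints then yields the variational derivative \eqref{eq:Gradient of Poincare map wrt pulse}, where I would use that the rotation matrix is orthogonal, so that $\mathbf R(\theta)^* = \mathbf R(\theta)^T = \mathbf R(-\theta)$ as a pointwise-multiplication operator on $L^2(\mathbb R,\mathbb R^2)$.

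The factorization of $\mathcal M^*$ is immediate from the composition formula \eqref{eq:Linearization of the round trip operator} together with the identity $(\mathcal A\circ\mathcal B)^*=\mathcal B^*\circ\mathcal A^*$ for bounded operators, applied repeatedly to reverse the order of the six factors; the boundedness of each $\mathcal U$ on $L^2$ needed here is supplied by the decay and smoothness hypotheses recalled from~\cite{shinglot2023essential}.

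The substantive part is the adjoint of the fiber solution operator $\mathcal U$, which I would obtain by a duality (co-state) argument for the time-dependent linear evolution $\partial_t\mathbf u = \mathcal L(\boldsymbol\psi(t,\cdot))\mathbf u$ on $[0,L]$, whose time-$L$ flow is $\mathcal U$. Let $\mathbf u(t)$ solve this forward equation and introduce an auxiliary co-state $\mathbf p(t)$ governed by the backward equation $\partial_t\mathbf p = -\mathcal L^*(\boldsymbol\psi(t,\cdot))\mathbf p$. Differentiating the pairing and using the two equations gives $\tfrac{d}{dt}\langle \mathbf p(t),\mathbf u(t)\rangle = \langle -\mathcal L^*\mathbf p,\mathbf u\rangle + \langle \mathbf p,\mathcal L\mathbf u\rangle = 0$, so $\langle \mathbf p(t),\mathbf u(t)\rangle$ is constant on $[0,L]$. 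Equating its values at $t=L$ and $t=0$ and choosing the terminal condition $\mathbf p(L)=\mathbf v_0$ gives $\langle \mathbf v_0,\mathcal U\mathbf u(0)\rangle = \langle \mathbf p(0),\mathbf u(0)\rangle$ for every initial datum $\mathbf u(0)$, whence $\mathcal U^*\mathbf v_0 = \mathbf p(0)$. Finally, reversing time via $s=L-t$ and setting $\mathbf v(s):=\mathbf p(L-s)$ turns the backward problem for $\mathbf p$ into the forward initial value problem \eqref{eq:Adjoint linearized IVP}, with $\mathbf v(0,\cdot)=\mathbf v_0$ and $\mathcal U^*\mathbf v_0 = \mathbf v(L,\cdot)$, as claimed.

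I expect the main obstacle to be the rigorous justification of the duality identity rather than its formal statement: one must verify that $t\mapsto\langle\mathbf p(t),\mathbf u(t)\rangle$ is differentiable and that the product rule applies, and that the integration by parts used to move the spatial operators in $\mathcal L$ onto $\mathbf p$ produces no boundary contributions, so that the formal (transpose) adjoint $\mathcal L^*$ coincides with the genuine Hilbert-space adjoint. The nonlocal gain term $\mathcal P(\boldsymbol\psi)$ in \eqref{eq:OperatorP} requires particular care, since its adjoint is itself a nonlocal, rank-one-type operator; confirming that the decay hypotheses of~\cite{shinglot2023essential} make all of these steps legitimate on $L^2(\mathbb R,\mathbb R^2)$ is where the real work lies.
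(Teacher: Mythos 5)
Your proposal is correct and follows essentially the same route as the paper: the directional derivative is computed by linearizing the residual and moving $\mathcal M$ and $\mathbf R(\theta)$ to the other slot of the inner product, and the fiber adjoint is obtained by showing the duality pairing between the forward solution and a co-state is constant in time. Your backward co-state $\mathbf p(t)$ with terminal condition is just the paper's $\mathbf v(s)$ under the substitution $s=L-t$, so the two arguments coincide up to this relabeling (and, like the paper, you state rather than resolve the analytic justification of the formal adjoint computation).
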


\begin{rem}
Note that here we have $s = L-t$ so that solving the adjoint equation from $s=0$ to $s=L$ propagates the initial pulse $\mathbf v_0$ backwards in $t$ from $t=L$ to $t=0$.
The formula for the operator, $\mathcal{L}^*$, in a fiber segment is obtained
from the formula for $\mathcal{L}$ in \eqref{eq:FA linearized equation} by taking the transposes of all matrices. The operator, $\mathcal{U}^{\emph{SA}}$, is self-adjoint. 
\end{rem}

\begin{proof} The variational derivative of $\mathcal{E}$ with respect to $\boldsymbol\psi_0$ is given by
\begin{align}
    D_{\boldsymbol\psi_0} \mathcal{E}(\mathbf u_0) &= \lim_{\epsilon \to 0} \frac{1}{\epsilon}\left(\mathcal{E}(\boldsymbol\psi_0+\epsilon \mathbf u_0,\theta)-
    \mathcal{E}(\boldsymbol\psi_{0},\theta)\right)
    \\
         &= \langle \mathcal{R}(\boldsymbol\psi_0) - \mathbf R (\theta)\boldsymbol \psi_0, \mathcal{M}(\mathbf u_0) - \mathbf R (\theta) \mathbf  u_0 \rangle_{L^2(\mathbb{R}, \mathbb{R}^2)},
 \label{eq:Derivative of P} 
    \end{align}
where we have used the fact that $\mathcal{R}(\psi_0 + \epsilon u_0) \approx \mathcal{R}(\psi_0) + \epsilon \mathcal{M}(u_0)$.
Setting $\mathbf v_0 := \mathcal{R}(\boldsymbol\psi_0) - \mathbf R(\theta)\boldsymbol\psi_0$ we find that
\begin{equation}
    D_{\boldsymbol\psi_0} \mathcal{E}(\mathbf u_0) =   \langle \mathcal{M}^*(\mathbf v_0) - \mathbf R(-\theta)\mathbf v_0, 
  \mathbf u_0 \rangle_{L^2(\mathbb{R}, \mathbb{R}^2)},
 \label{eq:Derivative of P new}
\end{equation}
which proves~\eqref{eq:Variational derivative of Poincare map}.

To derive~\eqref{eq:Adjoint linearized IVP}, we invoke the defining formula for
$\mathcal U^*$, 
\begin{equation}
    \langle\mathbf  v_0, \mathbf u_L \rangle = 
    \langle \mathbf v_0, \mathcal{U}(\mathbf u_0) \rangle = 
    \langle \mathcal{U}^*(\mathbf v_0), \mathbf u_0 \rangle =
     \langle \mathbf v_L, \mathbf u_0 \rangle.
   \label{eq:Inner product equality}
\end{equation}
Next, we set $s = L-t$ and introduce a function $\mathbf v = \mathbf v(s,x)$ to be chosen so that
\begin{equation}
    h(s) = \langle \mathbf v(s,\cdot),\,\, \mathbf u(L-s, \cdot) \rangle_{L^2(\mathbb{R}, \mathbb{R}^2)}
\end{equation}
is constant.
 Then $\mathbf v_L = \mathbf v(L,\cdot)$  will satisfy \eqref{eq:Inner product equality} as required.
 To derive an equation for $\mathbf v$, we differentiate $h$ to obtain
\begin{align*}
    h^{\prime}(s) &= \langle \partial_s \mathbf v(s,\cdot),\,\, \mathbf u(L-s,\cdot) \rangle_{L^2(\mathbb{R}, \mathbb{R}^2)} - \langle \mathbf v(s,\cdot), 
    \,\, \partial_t u(L-s,\cdot) \rangle_{L^2(\mathbb{R}, \mathbb{R}^2)}\\
    &= \langle \partial_s \mathbf v(s,\cdot),\,\, \mathbf u(L-s,\cdot)\rangle_{L^2(\mathbb{R}, \mathbb{R}^2)} - \langle \mathbf v(s,\cdot), 
    \,\,\mathcal{L}(\boldsymbol\psi(L-s,\cdot)) \mathbf u(L-s,\cdot)\rangle_{L^2(\mathbb{R}, \mathbb{R}^2)}\\
    &= \langle \partial_s \mathbf v(s,\cdot) - \mathcal{L}^*(\boldsymbol\psi(L-s,
    \cdot)) \mathbf v(s,\cdot),\,\, \mathbf u(L-s,\cdot)\rangle_{L^2(\mathbb{R}, \mathbb{R}^2)},
\end{align*}
which is zero provided that $\mathbf v$ satisfies the initial value problem \eqref{eq:Adjoint linearized IVP}.
\end{proof}

Next, we derive an analytical formula for the derivative of $\mathcal E$ with respect to $\theta$. For this result, it is easier to work over $\mathbb C$ than
$\mathbb R^2$.

\begin{proposition}
Suppose that $(\psi_0, \theta)$ is a local minimum of 
\begin{equation}
\mathcal{E}(\psi_0,\theta) =  
\frac 12 \| \mathcal R(\psi_0) - e^{i\theta}\psi_0 \|_{L^2(\mathbb R, \mathbb C)}.
\label{eq:DefEcomplex}
\end{equation}
Then $\theta = \theta(\psi_0)$ is given in terms of $\psi_0$ by
\begin{equation}
    (\cos{\theta}, \sin{\theta}) = \frac{1}{\sqrt{G^2(\psi_0) + H^2(\psi_0)}}(G(\psi_0), H(\psi_0)),
\end{equation}
where 
\begin{equation}
\begin{aligned}
    &F(\psi_0) = \frac{1}{2}\left\{ \norm{\mathcal{R}(\psi_0)}_{L^2(\mathbb{R}, \mathbb{C})}^2 + \norm{\psi_0}_{L^2(\mathbb{R}, \mathbb{C})}^2 \right\},\\
    &G(\psi_0) = \Re \langle \mathcal{R}(\psi_0), \psi_0 \rangle, \quad H(\psi_0) = \Im \langle \mathcal{R}(\psi_0), \psi_0 \rangle.
\end{aligned}
\end{equation}
Let
\begin{equation}
    {\mathcal{F}}(\psi_0) := \mathcal{E}(\psi_0, \theta(\psi_0)) = F(\psi_0) - \sqrt{G^2(\psi_0) + H^2(\psi_0)}.
    \label{eq:Poincare map functional 2}
\end{equation}
Then
\begin{equation}
    \frac{\delta {\mathcal{F}}}{\delta \psi}(\psi_0) = \frac{\delta \mathcal{E}}{\delta \psi}(\psi_0, \theta(\psi_0)),
    \label{eq:EDersEqual}
\end{equation}
where $\frac{\delta \mathcal{E}}{\delta \psi}(\psi_0, \theta(\psi_0))$ is given by \eqref{eq:Gradient of Poincare map wrt pulse}. Furthermore, $(\psi_0, \theta)$ is a local minimum of $\mathcal{E}$ iff $\psi_0$ is a local minimum of ${\mathcal{F}}$.
\end{proposition}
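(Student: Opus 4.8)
The plan is to first reduce the two-variable functional to a one-variable minimization over $\theta$ by expanding the squared $L^2$-norm in \eqref{eq:DefEcomplex} with the Hermitian inner product. Writing $\langle \mathcal{R}(\psi_0), e^{i\theta}\psi_0\rangle = e^{-i\theta}\langle \mathcal{R}(\psi_0), \psi_0\rangle$ and taking real parts, I would obtain the key identity
\begin{equation*}
\mathcal{E}(\psi_0, \theta) = F(\psi_0) - \left(G(\psi_0)\cos\theta + H(\psi_0)\sin\theta\right),
\end{equation*}
in which the cross term is linear in $(\cos\theta, \sin\theta)$. Thus minimizing $\mathcal{E}$ over $\theta$ reduces to maximizing $G\cos\theta + H\sin\theta$.

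Next I would maximize $G\cos\theta + H\sin\theta$ over $\theta$, either by Cauchy--Schwarz applied to the vectors $(G,H)$ and $(\cos\theta,\sin\theta)$ or by setting the $\theta$-derivative to zero. Both show the maximum value is $\sqrt{G^2 + H^2}$, attained exactly when $(\cos\theta,\sin\theta)$ is the unit vector in the direction of $(G,H)$, which is the claimed formula for $\theta(\psi_0)$. Substituting back immediately yields $\mathcal{F}(\psi_0) = F(\psi_0) - \sqrt{G^2(\psi_0) + H^2(\psi_0)}$, establishing \eqref{eq:Poincare map functional 2}. One caveat to record is that this needs $G^2 + H^2 \neq 0$; since a genuine periodically stationary pulse satisfies $\langle \mathcal{R}(\psi_0), \psi_0\rangle = e^{i\theta}\|\psi_0\|^2 \neq 0$, this nondegeneracy holds on a neighborhood of any nontrivial pulse of interest.

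The identity \eqref{eq:EDersEqual} is an instance of the envelope theorem. Since $\mathcal{F}(\psi_0) = \mathcal{E}(\psi_0, \theta(\psi_0))$, the chain rule gives
\begin{equation*}
\frac{\delta \mathcal{F}}{\delta \psi}(\psi_0) = \frac{\delta \mathcal{E}}{\delta \psi}(\psi_0, \theta(\psi_0)) + \frac{\partial \mathcal{E}}{\partial \theta}(\psi_0, \theta(\psi_0))\, \frac{\delta \theta}{\delta \psi}(\psi_0).
\end{equation*}
Because $\theta(\psi_0)$ is defined to minimize $\mathcal{E}(\psi_0, \cdot)$, the first-order condition $\partial_\theta \mathcal{E}(\psi_0, \theta(\psi_0)) = 0$ holds, so the second term vanishes and \eqref{eq:EDersEqual} follows, with $\tfrac{\delta \mathcal{E}}{\delta \psi}$ already computed in \eqref{eq:Gradient of Poincare map wrt pulse}. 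The step requiring care is justifying differentiability of $\theta(\psi_0)$ so that the chain rule applies; this follows from the explicit formula for $\theta(\psi_0)$ together with the smoothness of $G$ and $H$ and the nondegeneracy $G^2 + H^2 \neq 0$.

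Finally, for the equivalence of local minima, I would exploit the sandwich relation $\mathcal{E}(\psi,\theta) \geq \mathcal{E}(\psi,\theta(\psi)) = \mathcal{F}(\psi)$, valid for all $\theta$ because $\theta(\psi)$ globally minimizes the second argument. If $\psi_0$ is a local minimum of $\mathcal{F}$, then for $(\psi,\theta)$ near $(\psi_0,\theta(\psi_0))$ we get $\mathcal{E}(\psi,\theta) \geq \mathcal{F}(\psi) \geq \mathcal{F}(\psi_0) = \mathcal{E}(\psi_0,\theta(\psi_0))$, so the pair is a local minimum of $\mathcal{E}$. Conversely, if $(\psi_0,\theta(\psi_0))$ is a local minimum of $\mathcal{E}$, then by continuity of $\theta(\cdot)$ the point $(\psi,\theta(\psi))$ stays near $(\psi_0,\theta(\psi_0))$ for $\psi$ near $\psi_0$, whence $\mathcal{F}(\psi) = \mathcal{E}(\psi,\theta(\psi)) \geq \mathcal{E}(\psi_0,\theta(\psi_0)) = \mathcal{F}(\psi_0)$, so $\psi_0$ is a local minimum of $\mathcal{F}$. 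I expect the main obstacle to be the rigorous justification of the envelope-theorem step in the infinite-dimensional $L^2$ setting and the attendant smooth dependence of $\theta$ on $\psi_0$, rather than any of the algebraic identities, which are routine.
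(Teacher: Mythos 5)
Your proposal is correct and follows essentially the same route as the paper: expand $\mathcal{E}$ as $F(\psi_0) - \left(G(\psi_0)\cos\theta + H(\psi_0)\sin\theta\right)$ via the Hermitian inner product, optimize over $\theta$ (you use Cauchy--Schwarz where the paper uses the second-derivative test to select the correct sign of the critical point), and then apply the chain rule together with $\partial_\theta \mathcal{E}(\psi_0,\theta(\psi_0)) = 0$ to identify the two variational derivatives. The only differences are additions on your side: the explicit nondegeneracy caveat $G^2 + H^2 \neq 0$ and the sandwich argument proving the final ``local minimum iff'' claim, which the paper's proof leaves unaddressed.
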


\begin{proof}
By \eqref{eq:DefEcomplex},
\begin{align}
    \mathcal{E}(\psi_0, \theta) 
    &\,\,=\,\, \frac{1}{2}\left\{ \norm{\mathcal{R}(\psi_0)}_{L^2(\mathbb{R}, \mathbb{C})}^2 + \norm{\psi_0}_{L^2(\mathbb{R}, \mathbb{C})}^2 \right\} - \Re \langle e^{-i\theta} \mathcal{R}(\psi_0), \psi_0 \rangle_{L^2(\mathbb{R}, \mathbb{C})}
    \nonumber
    \\
    &\,\,=\,\, F(\psi_0) - \left( G(\psi_0) \cos{\theta} + H(\psi_0) \sin{\theta} \right).
\label{eq:EwrtFGH}
\end{align}    
Therefore, $\frac{\partial \mathcal{E}}{\partial \theta} = 0$ iff
\begin{equation}
    (\cos{\theta}, \sin{\theta}) = \frac{\pm 1}{\sqrt{G^2(\psi_0) + H^2(\psi_0)}}(G(\psi_0), H(\psi_0)).
    \label{eq:Critical point 1}
\end{equation}
To determine which of the signs in \eqref{eq:Critical point 1} 
corresponds to a local minimum of $\mathcal{E}(\theta)$ we observe that 
when $\theta$ satisfies \eqref{eq:Critical point 1}, 
the second derivative of $\mathcal E$ is given by
\begin{equation}
        \frac{\partial^2 \mathcal{E}}{\partial \theta^2} = G(\psi_0) \cos{\theta} + H(\psi_0) \sin{\theta} = \pm \sqrt{G^2(\psi_0) + H^2(\psi_0)}.
\end{equation}
Substituting the value of $\theta$ given by \eqref{eq:Critical point 1}
with the  $+$ sign into \eqref{eq:EwrtFGH}, we obtain
\eqref{eq:Poincare map functional 2}.
Finally, since
\begin{equation}
    \frac{\delta {\mathcal{F}}}{\delta \psi}(\psi_0) = \frac{\delta \mathcal{E}}{\delta \psi}(\psi_0, \theta(\psi_0)) + \frac{\delta \mathcal{E}}{\delta \theta}(\psi_0, \theta(\psi_0)) \frac{\delta \theta}{\delta \psi}(\psi_0)
\end{equation}
and  $\frac{\delta \mathcal{E}}{\delta \theta}(\psi_0, \theta(\psi_0)) = 0$, we obtain
\eqref{eq:EDersEqual}.
\end{proof}

\section{Fourier split-step method}
\label{sec:SplitStep}

In this section, we describe the Fourier split-step schemes we use to solve for the
nonlinear propagation of the pulse, $\boldsymbol\psi$, and its linearization, 
$\mathbf u$, in the fiber segments. These methods are based on the well known symmetric split-step scheme for the NLSE, which is globally second order accurate~\cite{weideman1986split}. 
Wang et al~\cite{wang2013comparison} show that, in addition to being nonlinear,
 the frequency filtering term,  $g(\boldsymbol\psi)\boldsymbol\psi_{xx}$,
 in the fiber amplifier
 equation~\eqref{eq:fiber amplifier}  is stiff. 
 Therefore,  we  make use of a numerical method they designed to handle this stiff term. 
With this method, we propagate the pulse for one time step
with the aid of a  frequency domain solution operator for the
stiff frequency filtering and chromatic dispersion  terms  and of a 
fast-time domain 
solution operator for the Kerr nonlinearity term.
We then derive a split-step method for the linearized equation \eqref{eq:FA linearized equation}
by linearizing these two solution operators. 
This approach yields explicit locally third-order accurate analytical formulae that do not involve the numerical computation of integrals over time. 
To the best of our knowledge, this
approach is novel even in the special case of the linearized NLSE.

\subsection{Operator splitting}

The level of rigor in discussions of  the symmetric split-step Fourier method for
nonlinear wave equations  varies widely~\cite{agrawal2006nonlinear,bao2002time,lu2015strang,muslu2003split,sinkin2003optimization,weideman1986split,yang2010nonlinear}. At one end of the spectrum is 
the rigorous convergence result of Lubich~\cite{lubich2008splitting}. 
At the other end are discussions that do not even explicitly address the sense in which the solution of $\partial_t \mathbf f = \mathcal C(t) \mathbf f$ is $\mathbf f(t) = \exp(\int_0^t \mathcal C(s)\, ds ) \mathbf f(0)$. Here we are thinking of $\mathcal C(t)$
as being the differential operator on the right hand side of the equation.
Indeed, equality  is not guaranteed to hold unless 
$\mathcal C(t_1)\,\mathcal C(t_2) = \mathcal C(t_2)\,\mathcal C(t_1)$ 
for all $t_1, t_2$, which is not even true  in the  case of the NLSE.
To provide an accessible explanation as to why the symmetric split-step Fourier methods for the fiber amplifier equation~\eqref{eq:fiber amplifier}  and its linearization~\eqref{eq:FA linearized equation} are  locally third-order accurate we begin with a discussion of operator splitting in this context.

\begin{proposition}
The solution to $\partial_t \mathbf f = \mathcal C(t) \mathbf f$ is of the form
\begin{equation}
\mathbf f(t+h) = \exp(\int_t^{t+h} \mathcal C(s)\, ds ) \mathbf f(t)\,\,+\,\,
\mathcal O(h^3).
\end{equation}
\end{proposition}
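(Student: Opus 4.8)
The plan is to prove the estimate by directly comparing the Taylor expansions in $h$ of the exact solution operator and of the proposed approximation $\exp(\int_t^{t+h}\mathcal C(s)\,ds)$, and showing that they agree through second order. First I would introduce the exact propagator $U(t+h,t)$ defined by $\partial_h U(t+h,t) = \mathcal C(t+h)\,U(t+h,t)$ with $U(t,t)=I$, so that $\mathbf f(t+h) = U(t+h,t)\,\mathbf f(t)$. The whole claim then reduces to the operator identity $U(t+h,t) = \exp(\int_t^{t+h}\mathcal C(s)\,ds) + \mathcal O(h^3)$, after which applying both sides to $\mathbf f(t)$ gives the result.

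Next I would compute the first Taylor coefficients of the exact propagator. Writing $U(t+h,t) = I + hA_1 + h^2 A_2 + \mathcal O(h^3)$, substituting into the defining equation together with $\mathcal C(t+h) = \mathcal C(t) + h\,\mathcal C'(t) + \mathcal O(h^2)$, and matching powers of $h$ yields
\begin{equation}
A_1 = \mathcal C(t), \qquad A_2 = \tfrac12\left(\mathcal C'(t) + \mathcal C(t)^2\right).
\end{equation}

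Then I would expand the approximation. With $B := \int_t^{t+h}\mathcal C(s)\,ds = h\,\mathcal C(t) + \tfrac{h^2}{2}\mathcal C'(t) + \mathcal O(h^3)$, the exponential series gives $\exp(B) = I + B + \tfrac12 B^2 + \mathcal O(h^3) = I + h\,\mathcal C(t) + \tfrac{h^2}{2}\mathcal C'(t) + \tfrac{h^2}{2}\mathcal C(t)^2 + \mathcal O(h^3)$, which matches $I + hA_1 + h^2 A_2$ exactly. Hence the two operators differ only at order $h^3$, proving the estimate. The conceptual point this exposes is that the non-commutativity of $\mathcal C$ at different times does \emph{not} enter at order $h^2$: since $B$ is built from a single integrated operator, $B^2$ contains no commutator, and the symmetric term $\tfrac12\mathcal C(t)^2$ it produces coincides with the $\mathcal C(t)^2$ appearing in $A_2$. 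The first discrepancy is the leading commutator (second Magnus) term, which vanishes on the diagonal $s_1=s_2$ and is therefore of order $h^3$.

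The main obstacle is rigor rather than algebra: here $\mathcal C(t)$ is an unbounded differential (and, for the amplifier, nonlocal) operator, so the Taylor expansion of $U$, the convergence of the series for $\exp(B)$, and the meaning of the remainder $\mathcal O(h^3)$ all require justification on an appropriate scale of function spaces, or via the analytic-semigroup framework already invoked for the existence of $\mathcal M$. Since the stated goal is an accessible explanation of why the split-step scheme is locally third-order accurate, I would present the computation formally as above and then remark that all identities can be interpreted as acting on sufficiently smooth, rapidly decaying initial data, for which the operators and their compositions are well defined and the remainder estimates hold.
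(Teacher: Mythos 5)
Your proposal is correct and takes essentially the same approach as the paper: both arguments Taylor-expand the exact solution in $h$ (you at the level of the propagator $U(t+h,t)$, the paper by expanding $\mathbf f(h)$ applied to the initial data), match coefficients against the differential equation, and verify agreement through order $h^2$ with the expansion of $\exp\left(\int_t^{t+h}\mathcal C(s)\,ds\right)$. Your closing remarks on non-commutativity entering only at order $h^3$ and on the formal treatment of unbounded operators are consistent with the paper's stated intent of giving an accessible, non-rigorous explanation.
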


\begin{proof}
For simplicity, we assume $t=0$. Substituting 
\begin{align}
\mathbf f(h) &= \mathbf f_0 + h \mathbf f_1 + h^2 \mathbf f_2 + \mathcal O(h^3),
\\
\mathcal C(h) &= \mathcal C_0 + h \mathcal C_1 + h^2 \mathcal C_2 + \mathcal O(h^3),
\end{align}
into the differential equation and equating coefficients of $h$, we find that 
\begin{align}
\mathbf f(h) &= 
\mathbf f_0 + h \mathcal C_0 \mathbf f_0 + \tfrac 12 h^2 
\left( \mathcal C_0^2 \mathbf f_0  + \mathcal C_1 \mathbf f_0 \right)
+ \mathcal O(h^3)
\\
&= \left[ \exp(\mathcal C_0 h) + \tfrac 12 h^2 \mathcal C_1 \right] \mathbf f_0
+ \mathcal O(h^3)
\\
&= \exp(\int_0^{h} \mathcal C(s)\, ds ) \mathbf f_0 
+ \mathcal O(h^3).
\end{align}
\end{proof}

The nonlinear and linearized equations in the fiber amplifier are both 
of the form
\begin{equation}
\partial_t \mathbf f \,\,=\,\, (\mathcal A(t) + \mathcal B(t)) \mathbf f,
\label{eq:SplitStepDE}
\end{equation}
where for the nonlinear equation (with $\mathbf f = \boldsymbol\psi$),
\begin{equation}
\mathcal A(t) = \mathcal L + g( \boldsymbol\psi(t))\mathcal K
\qquad\text{and}\qquad
\mathcal B(t) = \mathcal M_1(\boldsymbol\psi(t)),
\end{equation}
and for the linear equation (with $\mathbf f = \mathbf u$),
\begin{equation}
\mathcal A(t) = \mathcal L + g( \boldsymbol\psi(t))\mathcal K + 
 \mathcal P(\boldsymbol\psi(t))
\qquad\text{and}\qquad
\mathcal B(t) = \mathcal M_1(\boldsymbol\psi(t)) +
 \mathcal M_2(\boldsymbol\psi(t)).
\end{equation}
Let 
\begin{gather}
\mathcal A_1(h) := \int_t^{t+h/2} \mathcal A(s)\, ds = \widetilde{\mathcal A}_1 h/2
\qquad
\mathcal A_2(h) := \int_{t+h/2}^{t+h} \mathcal A(s)\, ds = \widetilde{\mathcal A}_2 h/2
\label{eq:A1A2}
\\
\mathcal B_0(h) := \int_t^{t+h} \mathcal B(s)\, ds = \widetilde{\mathcal B} h,
\label{eq:B0}
\end{gather}
where the final equalities follow from the mean value theorem for integrals.
In the special case of the NLSE, $\mathcal A(t)=\mathcal A$ 
is $t$-independent, and so the  symmetric split-step scheme
\begin{align}
\boldsymbol\psi(t+h) &= \exp(\mathcal Ah + \mathcal B_0(h))\boldsymbol\psi(t)
+ \mathcal O(h^3) \nonumber
\\
&= \exp(\mathcal Ah/2) \exp(\mathcal B_0(h)) \exp(\mathcal Ah/2)\boldsymbol\psi(t) + \mathcal O(h^3),
\end{align}
holds by two applications of the Baker-Campbell-Haussdorff formula
\begin{equation}
\exp(\mathcal Xh)\exp(\mathcal Yh) 
= 
\exp((\mathcal X + \mathcal Y)h + \frac 12 [\mathcal X, \mathcal Y] h^2) 
+ \mathcal O(h^3), 
\label{eq:BCH}
\end{equation}
where  
$[\mathcal X, \mathcal Y]  = \mathcal X \mathcal Y  - \mathcal Y \mathcal X$.
We note however, that for general operators,  $\widetilde{\mathcal A}_1$ and
 $\widetilde{\mathcal A}_2$, 
 \begin{equation}
 \exp( [\widetilde{\mathcal A}_2 + \widetilde{\mathcal B}  + \widetilde{\mathcal A}_1] h) 
 \neq 
 \exp(\widetilde{\mathcal A}_2 h) 
 \exp(\widetilde{\mathcal B} h) 
 \exp(\widetilde{\mathcal A}_1 h) 
 + \mathcal O(h^3).
 \label{eq:SymmFails}
 \end{equation}
 Nevertheless, we will now show that equality holds 
in \eqref{eq:SymmFails} for the operators in \eqref{eq:A1A2}.
Keeping only terms of order $< h^3$, we find that in general
\begin{equation}
\exp(\mathcal A_2) \exp(\mathcal B_0)\exp(\mathcal A_1)
= \exp(\mathcal A_1+\mathcal A_2+\mathcal B_0 + 
\frac 12 [\mathcal B_0, \mathcal A_1- \mathcal A_2] + 
[\mathcal A_2,\mathcal A_1]).
\end{equation}
Therefore,  it suffices to show that for the operators in \eqref{eq:A1A2},
$\mathcal A_1- \mathcal A_2 = \mathcal O(h^2)$ and
$[\mathcal A_2,\mathcal A_1] = \mathcal O(h^3)$. 
Using Taylor series, these results follow from the formulae
\begin{align}
\mathcal A_1(h) \,\,&=\,\, \tfrac h2 \left[ \mathcal A(t) + \tfrac h4 \mathcal A'(t)\right]
+ \mathcal O(h^3) 
\label{eq:A1approx}
\\
\mathcal A_2(h) \,\,&=\,\, \tfrac h2  \left[ \mathcal A(t+\tfrac h2) + \tfrac h4 \mathcal A'(t
+ \frac h2)\right]
+ \mathcal O(h^3) \\
\mathcal A_2(h) - \mathcal A_1(h) 
\,\,&=\,\, \tfrac h2 \left[ \tfrac h2 \mathcal A'(t) + \tfrac{h^2}{8}\mathcal A''(t)\right]
+ \mathcal O(h^3).
\end{align}
To summarize, the symmetric split-step scheme for \eqref{eq:SplitStepDE}
is given by
\begin{equation}
\mathbf f(t+h) = \exp(\mathcal A_2(h)) \exp(\mathcal B_0(h)) \exp(\mathcal A_1(h))\,\mathbf f(t) + \mathcal O(h^3).
\label{eq:SSS}
\end{equation}
For greater computational efficiency we 
use Richardson extrapolation to combine solutions with step sizes of $h$, $h/2$, 
and $h/4$ to obtain the globally fourth-order accurate
scheme,
\begin{equation}
\mathbf f_{k} 
= \frac{1}{21} \left[
32 \mathbf f_{k}^{h/4} - 12 \mathbf f_{k}^{h/2} + \mathbf f_{k}^{h}\right],
\end{equation}
where $ \mathbf f_{k}^{h}$ is the solution at time step $k$ obtained using \eqref{eq:SSS} with a step size of $h$.

\subsection{Solution operators for the nonlinear equations}

Next, we state two propositions that give analytical formulae for the two solution operators for the nonlinear equation~\eqref{eq:fiber amplifier new} in the fiber amplifier. Setting $g=0$
gives the corresponding results for the single mode fiber segments.

\begin{proposition}\label{prop:KerrTerm}
The solution operator for the Kerr nonlinearity term,
\begin{equation}
\partial_t \boldsymbol\psi \,\,=\,\, \gamma \|  \boldsymbol\psi  \|^2 \mathbf J
\,\boldsymbol\psi,
\label{eq:KerrTerm}
\end{equation}
in the nonlinear equation~\eqref{eq:fiber amplifier new} for the fiber amplifier is given by
\begin{equation}
\boldsymbol\psi (t+h,x) 
\,\,=\,\,
\exp( \gamma   \int_{t}^{t+h}
\| \boldsymbol\psi (s,x) \|^2 \,\mathbf J\, ds) \, \boldsymbol\psi (t,x)
\,\,=\,\,
\mathbf R( \gamma \| \boldsymbol\psi (t,x) \|^2 h)\boldsymbol\psi (t,x)
\label{eq:KerrSolution}
\end{equation}
where 
\begin{equation}
\mathbf R(b) = \begin{bmatrix} \cos b & -\sin b\\ \sin b & \cos b \end{bmatrix}.
\end{equation}
\end{proposition}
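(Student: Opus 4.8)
The plan is to exploit the fact that equation~\eqref{eq:KerrTerm} contains no spatial derivatives, so that for each fixed $x$ it is an ordinary differential equation for the vector $\boldsymbol\psi(\cdot,x)\in\mathbb R^2$. I would freeze $x$ throughout the argument and treat $\boldsymbol\psi$ as a curve in $\mathbb R^2$ parametrized by $t$. The single structural fact driving everything is that $\mathbf J$ is skew-symmetric, $\mathbf J^T=-\mathbf J$, so the quadratic form $\boldsymbol\psi^T\mathbf J\boldsymbol\psi$ vanishes identically for every $\boldsymbol\psi$.

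The first and essential step is to show that the Euclidean norm is conserved along the flow. Differentiating and substituting the equation gives $\tfrac{d}{dt}\norm{\boldsymbol\psi}^2 = 2\boldsymbol\psi^T\partial_t\boldsymbol\psi = 2\gamma\norm{\boldsymbol\psi}^2\,\boldsymbol\psi^T\mathbf J\boldsymbol\psi = 0$, so $\norm{\boldsymbol\psi(s,x)}^2$ is independent of $s$ over the step and equals its value $\norm{\boldsymbol\psi(t,x)}^2$ at the left endpoint. Once the norm is frozen, the scalar coefficient $\gamma\norm{\boldsymbol\psi}^2$ becomes constant in $s$ (for each fixed $x$), and~\eqref{eq:KerrTerm} reduces to the constant-coefficient linear system $\partial_t\boldsymbol\psi = \gamma\norm{\boldsymbol\psi(t,x)}^2\,\mathbf J\,\boldsymbol\psi$, whose solution is the matrix exponential $\boldsymbol\psi(t+h,x) = \exp(\gamma\norm{\boldsymbol\psi(t,x)}^2 h\,\mathbf J)\,\boldsymbol\psi(t,x)$.

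It then remains to identify this exponential with the rotation $\mathbf R$ and to reconcile the two expressions in~\eqref{eq:KerrSolution}. For the former, I would use $\mathbf J^2=-I$ to split the exponential series into its even and odd parts, giving $\exp(b\mathbf J)=\cos b\,I+\sin b\,\mathbf J = \mathbf R(b)$ with $b=\gamma\norm{\boldsymbol\psi(t,x)}^2 h$. For the latter, conservation of the norm makes the integral explicit, $\int_t^{t+h}\norm{\boldsymbol\psi(s,x)}^2\,ds = h\norm{\boldsymbol\psi(t,x)}^2$, so the middle expression collapses onto the right one. I expect the only genuine obstacle to be the conservation step: without it the coefficient would be truly time dependent and the integral could not be evaluated in closed form, and the argument succeeds precisely because skew-symmetry annihilates $\boldsymbol\psi^T\mathbf J\boldsymbol\psi$. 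As a consistency check I would also verify the claimed formula directly, differentiating $\mathbf R(\gamma\norm{\boldsymbol\psi(t,x)}^2 h)\,\boldsymbol\psi(t,x)$ in $h$ and invoking the orthogonality of $\mathbf R$ (which preserves the norm and so keeps the coefficient self-consistent) to recover~\eqref{eq:KerrTerm}.
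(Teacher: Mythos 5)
Your proposal is correct and follows essentially the same route as the paper's proof: conservation of $\norm{\boldsymbol\psi}^2$ along the flow (which the paper states tersely and you justify via skew-symmetry of $\mathbf J$), followed by identifying the resulting constant-coefficient matrix exponential $\exp(b\mathbf J)$ with the rotation $\mathbf R(b)$, which the paper quotes as the identity $\exp(a\mathbf I+b\mathbf J)=e^a\mathbf R(b)$ and you derive from $\mathbf J^2=-\mathbf I$ and the power series. Your version simply fills in the details the paper leaves implicit.
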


\begin{proof}
Applying~\eqref{eq:KerrTerm}, we see that $\| \boldsymbol\psi(s,x) \|^2$ is constant in $s$.
The result now follows from the fact that 
\begin{equation}
\exp(a\mathbf I+ b\mathbf J) \,\,=\,\, e^a\,\mathbf R(b).
\label{eq:expJ}
\end{equation}
\end{proof}

\begin{proposition}
The solution operator for the term,
\begin{equation}
\partial_t \boldsymbol\psi \,\,=\,\, 
\left(\mathcal L + g( \boldsymbol\psi(t,\cdot))\mathcal K
\right)
\,\boldsymbol\psi,
\label{eq:HMELinearTerm}
\end{equation}
in the nonlinear equation~\eqref{eq:fiber amplifier new} for the fiber amplifier is given  by
\begin{align}
\boldsymbol\psi (t+h/2,x) 
\,\,&=\,\,
\exp(  \int_{t}^{t+h/2}
\mathcal L + g( \boldsymbol\psi(s,\cdot))\mathcal K\, ds ) \,
\boldsymbol\psi (t,x)
\nonumber
\\
\,\,&=\,\,
\mathcal F^{-1} \left( e^{G(t,t+h/2)a(\omega)} \,
\mathbf R( b(\omega)h/2) 
\,
\widehat{\boldsymbol\psi} (t,\omega)\right),
\label{eq:LpgKsolution}
\end{align}
where $\mathcal F$ is the Fourier transform,
\begin{equation}
a(\omega) \,\,=\,\, \tfrac 12 \left(1 - \tfrac{\omega^2}{\Omega_g^2}\right)
\qquad\text{and}\qquad
b(\omega) \,\,=\,\, \tfrac 12 \beta\omega^2,
\end{equation}
 and
\begin{equation}
G(t,t+h/2) \,\,=\,\, \int_t^{t+h/2} g(\boldsymbol\psi(s))\, ds.
\label{eq:Gexact}
\end{equation}
Finally, to compute $\boldsymbol\psi(t+h/2,\cdot)$ only
in terms of $\boldsymbol\psi(t,\cdot)$ we employ the
approximation
\begin{equation}
G(t,t+h/2)\,\,=\,\, \tfrac h2 \left( g(t) + \tfrac h4 g_2(t) \right ) + \mathcal O(h^3),
\label{eq:Gapprox}
\end{equation}
where $g(t)=g(\boldsymbol\psi(t,\cdot))$ is given by \eqref{eq:Saturable gain} and 
\begin{equation}
g_2(t) \,\,:=\,\, g'(t)\,\,=\,\,
\frac{-2 g^2(t)}{g_0E_{\emph{sat}}}  \Re
\int_{\\-\infty}^\infty [\widehat{\boldsymbol\psi}(t,\omega)]^*
\left( b(\omega)\mathbf J  + g(t)  a(\omega) \mathbf I \right)
\widehat{\boldsymbol\psi}(t,\omega)\, d\omega.
\label{eq:gprime}
\end{equation}
where $\mathbf v^*$ denotes
the conjugate transpose of a column vector $\mathbf v^*$.
\end{proposition}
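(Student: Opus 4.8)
The plan is to pass to the Fourier domain, where the spatial operators become multiplication operators and the equation decouples into a family of planar linear ODEs indexed by the frequency $\omega$; the algebraic structure of $\mathbf I$ and $\mathbf J$ then solves each ODE exactly. First I would record that under $\mathcal F$ the operator $\partial_x^2$ becomes multiplication by $-\omega^2$, so that $\mathcal K \mapsto a(\omega)\mathbf I$ and $\mathcal L \mapsto b(\omega)\mathbf J$ with $a,b$ as stated. Crucially, since the saturable gain $g(\boldsymbol\psi(t,\cdot))$ depends on $\boldsymbol\psi$ only through the energy $E(t)=E(\boldsymbol\psi(t,\cdot))$, it is a scalar function of $t$ alone, with no $\omega$- or $x$-dependence. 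Hence \eqref{eq:HMELinearTerm} transforms into $\partial_t\widehat{\boldsymbol\psi}(t,\omega) = M(t,\omega)\widehat{\boldsymbol\psi}(t,\omega)$ with symbol $M(t,\omega) = g(t)a(\omega)\mathbf I + b(\omega)\mathbf J$.

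The observation that makes an exact formula possible is that the family $\{M(t,\omega)\}_t$ consists of commuting matrices: each $M(t,\omega)$ is a linear combination of $\mathbf I$ and $\mathbf J$, and the only $t$-dependence sits in the scalar coefficient of $\mathbf I$, which commutes with $\mathbf J$. Consequently the time-ordered exponential collapses to the ordinary exponential of the time-integral, $\widehat{\boldsymbol\psi}(t+h/2,\omega) = \exp(\int_t^{t+h/2}M(s,\omega)\,ds)\,\widehat{\boldsymbol\psi}(t,\omega)$. I would verify this directly by writing the propagator as $\Phi(\tau)=e^{A(\tau)}\mathbf R(B(\tau))$ with $A(\tau)=a(\omega)\int_t^{\tau}g(s)\,ds$ and $B(\tau)=b(\omega)(\tau-t)$, and using \eqref{eq:expJ} together with $\tfrac{d}{dB}\mathbf R(B)=\mathbf J\,\mathbf R(B)$ to check that $\Phi'(\tau)=M(\tau,\omega)\Phi(\tau)$. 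Evaluating the integral gives $\int_t^{t+h/2}M\,ds = G(t,t+h/2)\,a(\omega)\mathbf I + \tfrac h2 b(\omega)\mathbf J$, and one application of \eqref{eq:expJ} yields the factor $e^{G a(\omega)}\mathbf R(b(\omega)h/2)$; inverting $\mathcal F$ produces \eqref{eq:LpgKsolution}.

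For the approximation \eqref{eq:Gapprox} I would Taylor-expand $g(s)$ about $s=t$ and integrate, obtaining $\int_t^{t+h/2}g\,ds = \tfrac h2 g(t) + \tfrac{h^2}{8}g'(t) + \mathcal O(h^3)$, which is the stated expression with $g_2:=g'$. To derive the closed form \eqref{eq:gprime} for $g_2$, I would differentiate the saturable-gain relation \eqref{eq:Saturable gain} by the chain rule, giving $g'(t) = -\tfrac{g^2(t)}{g_0 E_{\text{sat}}}E'(t)$, and then compute the energy rate by differentiating $E(t)=\int|\widehat{\boldsymbol\psi}(t,\omega)|^2\,d\omega$ (Parseval) under the integral sign: $E'(t) = 2\Re\int[\widehat{\boldsymbol\psi}]^*\,\partial_t\widehat{\boldsymbol\psi}\,d\omega = 2\Re\int[\widehat{\boldsymbol\psi}]^*\big(b(\omega)\mathbf J + g(t)a(\omega)\mathbf I\big)\widehat{\boldsymbol\psi}\,d\omega$. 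Substituting this into the expression for $g'$ reproduces \eqref{eq:gprime} exactly.

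I expect the principal point to be conceptual rather than computational, namely the commutativity argument of the second paragraph. The equation has genuinely $t$-dependent coefficients, so a priori its propagator is a time-ordered (Magnus) exponential; it is only because the $\mathbf I$/$\mathbf J$ algebra forces every commutator to vanish that the sub-step propagator reduces to the plain exponential of the integral, making the scheme exact for this step. A minor confirming check is that the antisymmetry $\mathbf J^T=-\mathbf J$ renders $[\widehat{\boldsymbol\psi}]^*\mathbf J\widehat{\boldsymbol\psi}$ purely imaginary, so the $\Re$ in $E'(t)$ discards the dispersive contribution and only the gain-and-filter term changes the energy, consistent with \eqref{eq:gprime} being written in terms of the full symbol $M$.
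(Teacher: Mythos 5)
Your proposal is correct and follows essentially the same route as the paper's proof: pass to the Fourier domain where $\mathcal K\mapsto a(\omega)\mathbf I$ and $\mathcal L\mapsto b(\omega)\mathbf J$, apply the identity $\exp(a\mathbf I+b\mathbf J)=e^a\mathbf R(b)$ to get \eqref{eq:LpgKsolution}, obtain \eqref{eq:Gapprox} by Taylor expansion, and derive \eqref{eq:gprime} from the chain rule on \eqref{eq:Saturable gain} together with the Parseval formula for $E'(t)$ and the evolution equation. Your explicit commutativity argument (the $t$-dependence sits only in the coefficient of $\mathbf I$, so the time-ordered exponential collapses) is a welcome elaboration of a point the paper raises in its operator-splitting discussion but leaves implicit in this proof.
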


\begin{proof}
Equation~\eqref{eq:LpgKsolution} follows from the fact that
\begin{equation}
\mathcal L + g( \boldsymbol\psi(s))\mathcal K 
\,\,=\,\, 
\mathcal F^{-1}\,\circ\, 
\left( b(\omega) h\, \mathbf J \,\,+\,\, G(t,t+h)a(\omega)\,\mathbf I \right)
\, \circ \mathcal F,
\end{equation}
and then applying \eqref{eq:expJ}.
The derivation of \eqref{eq:Gapprox} is the same as that of
\eqref{eq:A1approx}.
Finally, \eqref{eq:gprime} follows from \eqref{eq:Saturable gain}, the formula
\begin{equation}
E'(t) \,\,=\,\, 2\Re \int_{-\infty}^\infty [\widehat{\boldsymbol\psi}(t,\omega)]^* 
\partial_t \widehat{\boldsymbol\psi}(t,\omega)\, d\omega,
\end{equation}
for the derivative of the pulse energy, and \eqref{eq:HMELinearTerm}.
\end{proof}

\begin{rem}
In practice, it is enough to implement a split-step solver for 
the scalar field, $\psi\in\mathbb C$ 
rather than the vector field, $\boldsymbol\psi\in\mathbb R^2$, as was
done in~\cite{wang2013comparison}. The reason for providing the solution
operators, \eqref{eq:KerrSolution} and \eqref{eq:LpgKsolution}, 
in the vector case is that in the next subsection we will use them to derive 
solution operators for the  linearized equation.
\end{rem}

\subsection{The linearized solution operators}

Next, we state two propositions that give analytical formulae for the two solution operators for the linearized equation~\eqref{eq:FA linearized equation}
in the fiber amplifier. Setting $g=0$
gives the corresponding results for the single mode fiber segments.

\begin{proposition}\label{prop:LinKerrTerm}
The solution operator for the linearization,
\begin{equation}
\partial_t \mathbf u \,\,=\,\, 
\left[\mathcal M_1(\boldsymbol\psi) + \mathcal M_2(\boldsymbol\psi)\right]
\mathbf u,
\label{eq:LinKerrTerm}
\end{equation}
 of the Kerr nonlinearity term 
in the linearized equation~\eqref{eq:FA linearized equation} for the fiber amplifier is 
\begin{align}
\mathbf u (t+h,x) 
\,\,&=\,\,
\exp(  \int_{t}^{t+h} \left[\mathcal M_1(\boldsymbol\psi(s)) + \mathcal M_2(\boldsymbol\psi(s))\right] \, ds) 
\, \mathbf u (t,x)
\nonumber
\\
\,\,&=\,\,
\mathbf R( \gamma \| \boldsymbol\psi (t,x) \|^2 h)
\left( \mathbf I + 2 \gamma h \,\mathbf J \boldsymbol\psi(t) \boldsymbol\psi(t)^T\right)
\mathbf u(t,x).
\label{eq:LinKerrSolution}
\end{align}
\end{proposition}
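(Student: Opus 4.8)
The plan is to use the paper's own strategy of \emph{linearizing the solution operators}: the solution operator for the linearized Kerr equation \eqref{eq:LinKerrTerm} is exactly the Fr\'echet derivative, with respect to the initial data, of the nonlinear flow $\Phi_h(\boldsymbol\psi) = \mathbf R(\gamma\norm{\boldsymbol\psi}^2 h)\boldsymbol\psi$ furnished by \cref{prop:KerrTerm}. To make this precise I would first check that the generator on the right of \eqref{eq:LinKerrTerm} is literally the linearization of the nonlinear vector field. Writing $F(\boldsymbol\psi) = \gamma\norm{\boldsymbol\psi}^2\mathbf J\boldsymbol\psi$ for the right-hand side of \eqref{eq:KerrTerm}, the product rule gives $DF(\boldsymbol\psi)\mathbf u = \gamma\norm{\boldsymbol\psi}^2\mathbf J\mathbf u + 2\gamma(\boldsymbol\psi^T\mathbf u)\mathbf J\boldsymbol\psi$, which upon rewriting the last term as $2\gamma\mathbf J\boldsymbol\psi\boldsymbol\psi^T\mathbf u$ is exactly $[\mathcal M_1(\boldsymbol\psi)+\mathcal M_2(\boldsymbol\psi)]\mathbf u$, the generator appearing in \eqref{eq:LinKerrTerm}. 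By the standard theorem on smooth dependence on initial data for the pointwise (in $x$) $2\times2$ system, the time-$h$ flow of this variational equation along the nonlinear trajectory is $D\Phi_h(\boldsymbol\psi(t))$, so it remains only to differentiate the closed form from \cref{prop:KerrTerm}.

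Second, I would carry out that differentiation. Put $b(\boldsymbol\psi)=\gamma\norm{\boldsymbol\psi}^2 h$, so $Db(\boldsymbol\psi)[\mathbf u]=2\gamma h\,\boldsymbol\psi^T\mathbf u$, and apply the product rule to $\Phi_h(\boldsymbol\psi)=\mathbf R(b)\boldsymbol\psi$ to obtain $D\Phi_h[\mathbf u]=\mathbf R(b)\mathbf u+(Db[\mathbf u])\,\mathbf R'(b)\boldsymbol\psi$. The crux is simplifying the second term. From $\mathbf R(b)=\cos b\,\mathbf I+\sin b\,\mathbf J$, as in \eqref{eq:expJ}, one has $\mathbf R'(b)=\mathbf J\mathbf R(b)=\mathbf R(b)\mathbf J$, where the final equality holds because $\mathbf R(b)$ is a polynomial in $\mathbf J$ and hence commutes with it. Pulling this common factor $\mathbf R(b)$ to the front and using $(\boldsymbol\psi^T\mathbf u)\mathbf J\boldsymbol\psi=\mathbf J\boldsymbol\psi\boldsymbol\psi^T\mathbf u$ collapses the two terms into $\mathbf R(b)\left(\mathbf I+2\gamma h\,\mathbf J\boldsymbol\psi\boldsymbol\psi^T\right)\mathbf u$, which is precisely \eqref{eq:LinKerrSolution}.

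I expect the main difficulty to be bookkeeping with two opposite noncommutativity facts: $\mathbf J$ \emph{does} commute with $\mathbf R(b)$, which is what lets me factor $\mathbf R(b)$ out front and land exactly on the stated form, whereas the generators $\mathcal M_1(\boldsymbol\psi(s))+\mathcal M_2(\boldsymbol\psi(s))$ at different times $s$ do \emph{not} commute, so the matrix exponential in the first line of \eqref{eq:LinKerrSolution} should be read as the split-step building block in the $\mathcal O(h^3)$ sense established at the start of \cref{sec:SplitStep}, with the factorized second line being the exact time-$h$ flow. As an independent check that makes the exactness transparent, I would pass to the corotating frame $\mathbf u(s)=\mathbf R(\gamma\norm{\boldsymbol\psi(t)}^2(s-t))\mathbf w(s)$: because \eqref{eq:KerrTerm} keeps $\norm{\boldsymbol\psi(s)}^2$ constant along the step, the $\mathcal M_1$ contribution cancels and $\mathbf w$ satisfies the constant-coefficient system $\partial_s\mathbf w=2\gamma\mathbf J\boldsymbol\psi(t)\boldsymbol\psi(t)^T\mathbf w$. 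Its generator is nilpotent since $\boldsymbol\psi^T\mathbf J\boldsymbol\psi=0$ forces $(\mathbf J\boldsymbol\psi\boldsymbol\psi^T)^2=0$, so the exponential truncates to $\mathbf I+2\gamma h\,\mathbf J\boldsymbol\psi(t)\boldsymbol\psi(t)^T$ and, after undoing the frame rotation, reproduces \eqref{eq:LinKerrSolution} exactly.
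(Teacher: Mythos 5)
Your proposal is correct and takes essentially the same route as the paper: the paper's proof likewise linearizes the closed-form Kerr flow of \cref{prop:KerrTerm}, setting $\boldsymbol\psi_\epsilon = \boldsymbol\psi + \epsilon\mathbf u$, expanding $\mathbf R(\gamma\|\boldsymbol\psi_\epsilon\|^2 h)\boldsymbol\psi_\epsilon$ to first order in $\epsilon$, and using $\mathbf R'(b)=\mathbf R(b)\mathbf J$ together with $(\boldsymbol\psi^T\mathbf u)\mathbf J\boldsymbol\psi=\mathbf J\boldsymbol\psi\boldsymbol\psi^T\mathbf u$ — precisely your product-rule computation of $D\Phi_h$. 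Your corotating-frame check (exploiting $\boldsymbol\psi^T\mathbf J\boldsymbol\psi=0$, hence the nilpotency of $\mathbf J\boldsymbol\psi\boldsymbol\psi^T$) is a bonus not in the paper, and it correctly clarifies that the factored second line of \eqref{eq:LinKerrSolution} is the exact flow, while the exponential-of-the-integral in the first line should be read in the $\mathcal O(h^3)$ split-step sense since the generators at different times do not commute.
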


\begin{proof}
Suppose that $\mathbf u$ solves \eqref{eq:LinKerrTerm}. Then 
$\boldsymbol \psi_\epsilon = \boldsymbol \psi + \epsilon \mathbf u$
solves \eqref{eq:KerrTerm} and so by \cref{prop:KerrTerm}, 
\begin{equation}
\boldsymbol \psi_\epsilon (t+h) \,\,=\,\, F(\epsilon) 
\boldsymbol \psi_\epsilon (t),
\quad\text{where } 
F(\epsilon) = \mathbf R(\theta(\epsilon)) 
\text{ with } 
\theta(\epsilon) = \gamma \| \boldsymbol \psi_\epsilon \|^2 h.
\label{eq:KerrTermFEqn}
\end{equation}
Keeping only those terms that are linear in $\epsilon$ gives
$\mathbf u(t+h) = F(0) \mathbf u(t) + F'(0)\boldsymbol\psi(t)$.
The result  now follows as $F'(\epsilon) = F(\epsilon) \mathbf J  \theta'(\epsilon) 
$ and $\theta'(\epsilon) = 2\gamma h \boldsymbol\psi(t)^T 
\mathbf u(t) + 2 \epsilon \| \mathbf u(t)\|^2$.
\end{proof}

\begin{proposition}\label{prop:LinHMELinearTerm}
The solution operator for the term,
\begin{equation}
\partial_t \mathbf u \,\,=\,\, 
\left[\mathcal L + g( \boldsymbol\psi)\mathcal K + \mathcal P( \boldsymbol\psi) \right] \mathbf u,
\label{eq:LinHMELinearTerm}
\end{equation}
in the linearized equation~\eqref{eq:FA linearized equation}  is given  in the Fourier domain by
\begin{equation}
\widehat{\mathbf u}(t+h/2,\omega) 
\,\,=\,\,
e^{a(\omega) G(t,t+h/2)} \, 
\mathbf R( b(\omega)h/2) 
\left[ a(\omega) \widehat{\boldsymbol\psi}(t,\omega) \, 
\frac{\partial G}{\partial u} \,\,+\,\, \widehat{\mathbf u}(t, \omega)\right],
\label{eq:LinHMELinearTermSoln}
\end{equation}
where the directional derivative of nonlocal gain is given by 
\begin{equation}
\frac{\partial G}{\partial u} 
\,\,=\,\,
\frac h2 \left( \frac{\partial g}{\partial u} +  \frac h4 \frac{\partial g_2}{\partial u} \right) \,\,+\,\, \mathcal O(h^3),
\label{eq:dGduapprox}
\end{equation}
with 
\begin{equation}
\frac{\partial g}{\partial u}  \,\,=\,\,
\frac{-2g^2(\boldsymbol\psi)}{g_0 E_\emph{sat}}
\,\langle \boldsymbol\psi, \mathbf u \rangle
\label{eq:dgdu}
\end{equation}
and
\begin{equation}
\frac{\partial g_2}{\partial u}  \,\,=\,\,
\frac{-2g(\boldsymbol\psi)}{g_0 E_\emph{sat}}
\left[
2g^2(\boldsymbol\psi)
\,\langle \mathcal K\boldsymbol\psi, \mathbf u \rangle\,\,+\,\,
( 3 g(\boldsymbol\psi) 
\,\langle \mathcal K\boldsymbol\psi, \boldsymbol\psi \rangle +
2 \langle \boldsymbol\psi, \mathcal L \boldsymbol\psi \rangle) \frac{\partial g}{\partial u}
\right].
\label{eq:dg2du}
\end{equation}
\end{proposition}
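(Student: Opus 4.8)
The plan is to mirror the linearization argument used in \cref{prop:LinKerrTerm}, now applied to the solution operator \eqref{eq:LpgKsolution} of the preceding proposition rather than to the Kerr operator. The key observation is that the linearized operator $\mathcal L + g(\boldsymbol\psi)\mathcal K + \mathcal P(\boldsymbol\psi)$ in \eqref{eq:LinHMELinearTerm} is exactly the Fr\'echet derivative of the nonlinear operator $\mathcal L + g(\boldsymbol\psi)\mathcal K$ in \eqref{eq:HMELinearTerm}: the terms $\mathcal L$ and $g(\boldsymbol\psi)\mathcal K$ are linear in the factor they multiply, while $\mathcal P(\boldsymbol\psi)\mathbf u = (\partial g/\partial u)\,\mathcal K\boldsymbol\psi$ captures the directional derivative of the gain, since $(1 + \Omega_g^{-2}\partial_x^2) = 2\mathcal K$ and $E'(\boldsymbol\psi)\mathbf u = 2\langle\boldsymbol\psi,\mathbf u\rangle$. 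Consequently, if $\mathbf u$ solves \eqref{eq:LinHMELinearTerm}, then $\boldsymbol\psi_\epsilon := \boldsymbol\psi + \epsilon\mathbf u$ solves the nonlinear equation \eqref{eq:HMELinearTerm} to first order in $\epsilon$.

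First I would apply the solution operator \eqref{eq:LpgKsolution} to $\boldsymbol\psi_\epsilon$, obtaining in the Fourier domain
\[
\widehat{\boldsymbol\psi_\epsilon}(t+h/2,\omega) = e^{a(\omega)G_\epsilon}\,\mathbf R(b(\omega)h/2)\,\widehat{\boldsymbol\psi_\epsilon}(t,\omega),
\]
where $G_\epsilon := G(t,t+h/2)$ is evaluated at $\boldsymbol\psi_\epsilon$. I would then differentiate in $\epsilon$ and retain only the term linear in $\epsilon$. The product rule yields two contributions: differentiating $\widehat{\boldsymbol\psi_\epsilon}(t,\omega)$ produces $\widehat{\mathbf u}(t,\omega)$, while differentiating $e^{a(\omega)G_\epsilon}$ produces a factor $a(\omega)\,(dG_\epsilon/d\epsilon)|_{\epsilon=0}$ multiplying $\widehat{\boldsymbol\psi}(t,\omega)$. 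Identifying $(dG_\epsilon/d\epsilon)|_{\epsilon=0}$ with the directional derivative $\partial G/\partial u$ gives exactly \eqref{eq:LinHMELinearTermSoln}.

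It then remains to establish \eqref{eq:dGduapprox} for $\partial G/\partial u$, which I would obtain by taking the directional derivative of the approximation \eqref{eq:Gapprox}, namely $G = \tfrac h2\bigl(g + \tfrac h4 g_2\bigr) + \mathcal O(h^3)$. The derivative $\partial g/\partial u$ follows at once from \eqref{eq:Saturable gain} together with $E'(\boldsymbol\psi)\mathbf u = 2\langle\boldsymbol\psi,\mathbf u\rangle$, giving \eqref{eq:dgdu}. The computation of $\partial g_2/\partial u$ from \eqref{eq:gprime} is the main obstacle. Writing $g_2 = -\tfrac{2g^2}{g_0 E_{\text{sat}}}\,Q$ with $Q = \Re\int \widehat{\boldsymbol\psi}^*(b\mathbf J + g\,a\mathbf I)\widehat{\boldsymbol\psi}\,d\omega = \langle\boldsymbol\psi,\mathcal L\boldsymbol\psi\rangle + g\,\langle\mathcal K\boldsymbol\psi,\boldsymbol\psi\rangle$ via Parseval, I would apply the product rule while carefully tracking that $\boldsymbol\psi$ enters both explicitly in the quadratic form and implicitly through $g(\boldsymbol\psi)$.

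The delicate bookkeeping is precisely where the nonlocality of the gain makes itself felt. Differentiating the prefactor $g^2$ contributes $2g(\partial g/\partial u)Q$; inside $Q$, the self-adjointness of $\mathcal K$ gives $\partial_u\langle\mathcal K\boldsymbol\psi,\boldsymbol\psi\rangle = 2\langle\mathcal K\boldsymbol\psi,\mathbf u\rangle$, whereas the anti-self-adjointness of $\mathcal L$ forces $\partial_u\langle\boldsymbol\psi,\mathcal L\boldsymbol\psi\rangle = 0$. Thus the $\langle\boldsymbol\psi,\mathcal L\boldsymbol\psi\rangle$ term that survives in \eqref{eq:dg2du} arises solely from differentiating the prefactor, not from the quadratic form itself. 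Collecting the three contributions and factoring out $-\tfrac{2g}{g_0 E_{\text{sat}}}$ then yields \eqref{eq:dg2du}, completing the derivation of \eqref{eq:dGduapprox} and hence of the solution operator \eqref{eq:LinHMELinearTermSoln}.
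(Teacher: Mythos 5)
Your proof is correct and takes essentially the same route as the paper's: perturb $\boldsymbol\psi$ to $\boldsymbol\psi_\epsilon = \boldsymbol\psi + \epsilon\mathbf u$, apply the nonlinear solution operator \eqref{eq:LpgKsolution}, differentiate in $\epsilon$ at $\epsilon = 0$ so that the factor $e^{a(\omega)G_\epsilon}$ produces the $a(\omega)\,\widehat{\boldsymbol\psi}\,\frac{\partial G}{\partial u}$ term, and then compute $\frac{\partial g_2}{\partial u}$ by the product rule on $g_2 = \frac{-2g^2}{g_0E_{\text{sat}}}\langle\boldsymbol\psi,(\mathcal L + g\mathcal K)\boldsymbol\psi\rangle$ using $\mathcal K^* = \mathcal K$ and $\mathcal L^* = -\mathcal L$, exactly as the paper does. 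The only difference is presentational: you spell out the Fr\'echet-derivative identification $\mathcal P(\boldsymbol\psi)\mathbf u = \frac{\partial g}{\partial u}\,\mathcal K\boldsymbol\psi$ and the Parseval step, which the paper leaves implicit.
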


\begin{rem}
The inner products in \eqref{eq:dgdu} and \eqref{eq:dg2du}
are the $L^2$-inner products
$\langle \cdot, \cdot \rangle = 
\langle \cdot, \cdot \rangle_{L^2(\mathbb R, \mathbb C^2)}$.
These can be computed in the frequency domain using the formulae
\begin{align}
\langle \mathcal K\boldsymbol\psi, \mathbf u \rangle
\,\,&=\,\,
\int_{-\infty}^{\infty} a(\omega) \, \widehat{\boldsymbol\psi}^*(\omega)\,
\widehat{\mathbf u}(\omega)
\, d\omega,
\\
\langle \mathcal K\boldsymbol\psi, \boldsymbol\psi \rangle
\,\,&=\,\,
\int_{-\infty}^{\infty} a(\omega) \, \| \widehat{\boldsymbol\psi}(\omega)\|^2\, d\omega,
\\
\langle \boldsymbol\psi, \mathcal L \boldsymbol\psi \rangle
\,\,&=\,\,
\int_{-\infty}^{\infty} b(\omega) \, \widehat{\boldsymbol\psi}^*(\omega)\,
\mathbf J\,
\widehat{\boldsymbol\psi}(\omega)
\, d\omega.
\end{align}
\end{rem}

\begin{proof}
The proof is similar to that of \cref{prop:LinKerrTerm}.
Let $G(\boldsymbol\psi,h) = G(t,t+h)$ as in \eqref{eq:Gexact}.
This time we set
$F(\epsilon)(\omega) = e^{a(\omega) G(\epsilon,h)} \,
\mathbf R( b(\omega)h/2)$, where
$G(\epsilon,h) = G(\boldsymbol\psi + \epsilon\mathbf u,h)$.
Then $F'(\epsilon) = a(\omega) \partial_\epsilon G(\epsilon,h)\, F(\epsilon)$.
Therefore, \eqref{eq:LinHMELinearTermSoln} follows by defining
$\frac{\partial G}{\partial u} =  \partial_\epsilon G(0,h)$, in accordance with the
definition of the directional derivative. 

Next, \eqref{eq:dGduapprox} follows from \eqref{eq:Gapprox}
where 
\begin{equation}
\frac{dg}{du} \,\,:=\,\, \left.\frac{\partial}{\partial\epsilon}\right |_{\epsilon=0}g(\boldsymbol\psi+\epsilon\mathbf u)
\,\,=\,\, 
\frac{-g^2(\boldsymbol\psi)}{g_0E_{\text{sat}}} \frac{\partial E}{\partial u}
\,\,=\,\, 
\frac{-2g^2(\boldsymbol\psi)}{g_0E_{\text{sat}}} \langle \boldsymbol\psi,\mathbf u\rangle,
\end{equation}
and $\frac{dg_2}{du}$ is calculated as follows. First, 
as functions of $x$
we have that
\begin{equation}
g_2(\boldsymbol\psi) = F_1(\boldsymbol\psi)F_2(\boldsymbol\psi),
\label{eq:g2product}
\end{equation}
where 
\begin{equation}
F_1(\boldsymbol\psi) = \frac{-2g^2(\boldsymbol\psi)}{g_0E_{\text{sat}}}
\quad\text{ and }\quad
F_2(\boldsymbol\psi) = \langle \boldsymbol\psi, (\mathcal L + g(\boldsymbol\psi) \mathcal K) \boldsymbol\psi \rangle.
\end{equation}
Now 
\begin{equation}
\frac{\partial F_1}{\partial u} \,\,=\,\, \frac{-4g(\boldsymbol\psi)}{g_0E_{\text{sat}}}
\, \frac{\partial g}{\partial u}
\end{equation}
and
\begin{align}
\frac{\partial F_2}{\partial u} \,\,&=\,\, 
\langle \mathbf u, (\mathcal L + g(\boldsymbol\psi)\mathcal K)
\boldsymbol\psi \rangle
\,\,+\,\,
\langle \boldsymbol\psi, \mathcal K \boldsymbol\psi \rangle 
\frac{\partial g}{\partial u}
\,\,+\,\,
\langle  \boldsymbol\psi, (\mathcal L + g(\boldsymbol\psi)\mathcal K)
\mathbf u \rangle
\\
\,\,&=\,\,
2g(\boldsymbol\psi) \langle \mathcal K \boldsymbol\psi, \mathbf u\rangle
\,\,+\,\,
\langle \boldsymbol\psi, \mathcal K \boldsymbol\psi \rangle 
\frac{\partial g}{\partial u},
\end{align}
since $\mathcal K^* = \mathcal K$ and $\mathcal L^* = -\mathcal L$.
Equation~\eqref{eq:dg2du} now follows by applying the product rule to \eqref{eq:g2product}.
\end{proof}

\subsection{Solution operators for the adjoint linearized equations}

In this subsection, we describe the split-step method we used to solve the
adjoint linearized equation in the fiber amplifier, which is not completely
straightforward due to the nonlocal saturable gain, $g$.

Since the adjoint linearized equation in a fiber segment 
is solved backwards in time, we introduce the backwards time 
variable, $s=L-t$, where $L$ is the length of the segment. 
By \eqref{eq:Adjoint linearized IVP}, in the fiber amplifier the adjoint equation is given by
\begin{equation}
\partial_s \mathbf v \,\,=\,\, 
\left( \mathcal L^T + g(\boldsymbol\psi(t))\mathcal K^T 
+ [\mathcal M_1(\boldsymbol\psi(t))]^T 
+ \mathcal [M_2(\boldsymbol\psi(t))]^T
+ \mathcal [P(\boldsymbol\psi(t))]^T
\right) \, \mathbf v,
\label{eq:AdjointFA}
\end{equation}
where $\mathcal L$ and $\mathcal M_1$ are antisymmetric,
$\mathcal K$ is symmetric, and $\mathcal M_2^T = -2\gamma\boldsymbol\psi\boldsymbol\psi^T\mathbf J$.
Next, we recall from \eqref{eq:OperatorP} that  $\mathcal P(\mathbf u) = 
\frac{-2g^2}{g_0E_{\text{sat}}} \mathcal K\boldsymbol\psi\, \langle \boldsymbol\psi,\mathbf u\rangle$.
A calculation based on the defining formula for the 
adjoint~\eqref{eq:Inner product equality}
shows that
\begin{equation}
\mathcal P^T(\mathbf v) = 
\frac{-2g^2}{g_0E_{\text{sat}}} \boldsymbol\psi\, \langle \mathcal K\boldsymbol\psi , \mathbf v\rangle.
\end{equation}

\begin{proposition}
The solution operator for the adjoint equation~\eqref{eq:AdjointFA}, 
in a fiber amplifier of length, $L$, 
is given up to terms of order $\mathcal O(h^3)$ by
\begin{align}
\mathbf v(s) \,\,&=\,\, \mathcal U^*(s,s-h) \mathbf v(s-h)
\nonumber
\\
\,\,&=\,\,
[\exp( \mathcal A(t+h/2, t))]^*\,
[\exp( \mathcal B(t+h, t))  ]^* \,
[\exp( \mathcal A(t+h, t+h/2))]^*,
\label{eq:AdjointSolnOp}
\end{align}
where $t=L-s$, and the split solution operators are given by
\begin{equation}
[\exp( \mathcal B(t+h, t)) ]^* \,\,=\,\, (\mathbf I - 2\gamma \boldsymbol\psi(t)
\boldsymbol\psi(t)^T \mathbf J) \, \mathbf R(-\gamma \| \boldsymbol\psi(t) \|^2 h)
 \end{equation}
which is most readily computed 
in the fast-time domain, and
\begin{align}
[\exp( \mathcal A(t+h/2, t))]^*\mathbf v \,\,&=\,\,
\nabla G(h/2) \, \langle \mathcal K\boldsymbol\psi(t), 
\mathbf w \rangle +\mathbf w,
\label{eq:AdjointSolnA}
\\
\mathbf w \,\,&=\,\, 
\exp[-\mathcal L h/2 + G(\boldsymbol\psi(t),h/2)\mathcal K]\,\mathbf v,
\nonumber
\end{align}
which is most readily computed in the frequency domain. Here,
\begin{equation}
\nabla G(\boldsymbol\psi , h/2)  \,\,=\,\, \tfrac h2 \left[ \alpha_1 \boldsymbol\psi 
+ \tfrac h4 ( \alpha_2 \mathcal K \boldsymbol\psi  + \alpha_3 \boldsymbol\psi )\right],
\end{equation}
where
\begin{equation}
\alpha_1 = \frac{-2g^2}{g_0E_{\operatorname{sat}}},
\qquad
\alpha_2=2g\alpha_1,
\qquad
\alpha_3 = \frac{-2g}{g_0E_{\operatorname{sat}}}
\left( 3g\langle \mathcal K \boldsymbol\psi, \boldsymbol\psi \rangle
+ 2 \langle \boldsymbol\psi, \mathcal L \boldsymbol\psi \rangle \right) \alpha_1,
\end{equation}
are all evaluated at time, $t$.
\end{proposition}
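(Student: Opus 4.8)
The plan is to obtain the adjoint solution operator as the operator adjoint of the forward symmetric split-step scheme~\eqref{eq:SSS}. Since, up to $\mathcal O(h^3)$, the forward operator over $[t,t+h]$ factors as $\mathcal U = \exp(\mathcal A_2)\exp(\mathcal B_0)\exp(\mathcal A_1)$, and the adjoint of a composition reverses the order of the factors, we have $\mathcal U^* = [\exp(\mathcal A_1)]^*[\exp(\mathcal B_0)]^*[\exp(\mathcal A_2)]^*$. This is precisely the structure asserted in~\eqref{eq:AdjointSolnOp}, and it is the discretization of the adjoint initial value problem~\eqref{eq:Adjoint linearized IVP} that is consistent with marching backwards in $s=L-t$: the factor applied last in the forward march is applied first in the backward march. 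The task therefore reduces to computing the adjoint of each of the two distinct factors, $\exp(\mathcal B_0)$ and $\exp(\mathcal A_1)$, the factor $\exp(\mathcal A_2)$ being of the same form as $\exp(\mathcal A_1)$ but integrated over the second half-step. Throughout I would use the defining relation~\eqref{eq:Inner product equality} with the $L^2(\mathbb R,\mathbb R^2)$ inner product.

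For the Kerr factor I would start from the forward solution operator~\eqref{eq:LinKerrSolution}, which is pointwise in $x$ and equals the matrix $\mathbf R(\gamma\norm{\boldsymbol\psi}^2 h)(\mathbf I + 2\gamma h\,\mathbf J\boldsymbol\psi\boldsymbol\psi^T)$. Its adjoint is the matrix transpose with the two factors reversed, so using $\mathbf R^T(b)=\mathbf R(-b)$ and $\mathbf J^T=-\mathbf J$ immediately gives $(\mathbf I - 2\gamma h\,\boldsymbol\psi\boldsymbol\psi^T\mathbf J)\,\mathbf R(-\gamma\norm{\boldsymbol\psi}^2 h)$, which is the claimed formula and is most cheaply evaluated in the fast-time domain.

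The substantive step is the adjoint of $\exp(\mathcal A_1)$, which carries the nonlocal operator $\mathcal P$. Reading~\eqref{eq:LinHMELinearTermSoln}, I would split the forward operator into a local-in-frequency part, $\mathbf u \mapsto \exp[\mathcal L h/2 + G\mathcal K]\mathbf u$, and a rank-one part, $\mathbf u \mapsto \phi\,\tfrac{\partial G}{\partial u}$, where $\phi = \exp[\mathcal L h/2 + G\mathcal K]\,\mathcal K\boldsymbol\psi$, since $\mathcal K\boldsymbol\psi \leftrightarrow a(\omega)\widehat{\boldsymbol\psi}$ and $\exp[\mathcal L h/2 + G\mathcal K]\leftrightarrow e^{a(\omega)G}\mathbf R(b(\omega)h/2)$ in the frequency domain. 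The directional derivative is a real linear functional of $\mathbf u$; collecting the inner products in~\eqref{eq:dgdu}--\eqref{eq:dg2du} and reading off the coefficients $\alpha_1,\alpha_2,\alpha_3$ lets me write $\tfrac{\partial G}{\partial u}=\langle\nabla G,\mathbf u\rangle$ with $\nabla G = \tfrac h2[\alpha_1\boldsymbol\psi + \tfrac h4(\alpha_2\mathcal K\boldsymbol\psi + \alpha_3\boldsymbol\psi)]$ exactly as stated. The adjoint of the local part uses $\mathcal L^*=-\mathcal L$, $\mathcal K^*=\mathcal K$, and the self-adjointness of multiplication by $e^{a(\omega)G}$, giving $\mathbf w=\exp[-\mathcal L h/2 + G\mathcal K]\mathbf v$. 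The adjoint of the rank-one map $\mathbf u\mapsto\phi\langle\nabla G,\mathbf u\rangle$ is $\mathbf v\mapsto\nabla G\,\langle\phi,\mathbf v\rangle$; moving the local propagator inside $\phi$ over onto $\mathbf v$ converts $\langle\phi,\mathbf v\rangle$ into $\langle\mathcal K\boldsymbol\psi,\mathbf w\rangle$, yielding $[\exp(\mathcal A_1)]^*\mathbf v = \nabla G\,\langle\mathcal K\boldsymbol\psi,\mathbf w\rangle + \mathbf w$, as claimed in~\eqref{eq:AdjointSolnA}.

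I expect the main obstacle to be the bookkeeping in this last step: correctly representing the nonlocal directional derivative $\tfrac{\partial G}{\partial u}$ as the pairing against an explicit vector $\nabla G$, and tracking that because $\phi$ already carries the forward propagator $\exp[\mathcal L h/2 + G\mathcal K]$, its adjoint deposits that propagator on $\mathbf v$ (producing $\mathbf w$), so the surviving inner product is $\langle\mathcal K\boldsymbol\psi,\mathbf w\rangle$ rather than $\langle\phi,\mathbf v\rangle$. The identities $\mathcal L^*=-\mathcal L$ and $\mathcal K^*=\mathcal K$, already exploited in the proof of~\cref{prop:LinHMELinearTerm}, are exactly what make the local propagator and its adjoint differ only through the sign of the $\mathcal L$-term, which is why $\mathbf w$ can be evaluated in the frequency domain as cheaply as the forward half-step.
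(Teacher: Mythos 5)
Your proposal is correct and follows essentially the same route as the paper: take the adjoint of the symmetric split-step factorization (which reverses the order of the three factors), transpose the pointwise Kerr matrices, and handle the nonlocal factor by representing $\partial G/\partial u = \langle \nabla G, \mathbf u\rangle$ and taking the adjoint of the local-propagator-plus-rank-one structure, so that the propagator lands on $\mathbf v$ (producing $\mathbf w$) via $\mathcal L^* = -\mathcal L$, $\mathcal K^* = \mathcal K$ --- exactly the content of the paper's identity $\langle \mathcal T(\mathbf f \langle \mathbf g,\mathbf u\rangle ), \mathbf v \rangle = \langle \mathbf u, \langle \mathbf f, \mathcal T^*\mathbf v \rangle \mathbf g \rangle$. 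Incidentally, your Kerr adjoint correctly retains the factor of $h$ in the rank-one term, $\mathbf I - 2\gamma h\,\boldsymbol\psi\boldsymbol\psi^T\mathbf J$, which the proposition's displayed formula omits (a typo in the statement, not in your argument).
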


\begin{proof}
We recall from \eqref{eq:SSS} that the solution operator
for the linearized equation from time $t$ to $t+h$ is  of the form
\begin{align}
\mathbf u(t+h) \,\,&=\,\, \mathcal U(t+h,t)\,\mathbf u(t)
\nonumber
\\
\,\,&=\,\, \exp( \mathcal A(t+h, t+h/2)) \exp( \mathcal B(t+h, t))  
\exp( \mathcal A(t+h/2, t)) \, \mathbf u(t),
\end{align}
where the operators $\mathcal A$ and $\mathcal B$ are given in \cref{prop:LinKerrTerm} and \cref{prop:LinHMELinearTerm}, respectively.
Since the forward time interval $[t,t+h]$ corresponds to the backward time interval
$[s-h,s]$,  the solution operator for the adjoint equation is given by
\begin{align}
\mathbf v(s) \,\,=\,\, \mathcal U^*(s,s-h) \mathbf v(s-h)
\,\,=\,\, [\mathcal U(t+h,t)]^* \mathbf v(s-h),
\end{align}
from which we obtain~\eqref{eq:AdjointSolnOp}. To establish~\eqref{eq:AdjointSolnA}, we first observe that
the gradient, $\nabla G$, is defined so that 
$\frac{\partial G}{\partial u} \,\,=\,\,\langle \nabla G,\mathbf u\rangle$.
Then, as in~\eqref{eq:LinHMELinearTermSoln}, 
\begin{equation*}
\mathbf u(t+h) \,\,=\,\, \exp( A(t+h/2,t))\, \mathbf u(t) 
 \,\,=\,\, 
 \exp(\mathcal L h/2 + G(\boldsymbol\psi,h/2)\mathcal K)\,
 (\mathbf u(t) + \mathcal K \boldsymbol\psi \langle \nabla G,\mathbf u\rangle ).
\end{equation*}
Equation~\eqref{eq:AdjointSolnA} now follows from the identity
$\langle \mathcal T(\mathbf f \langle \mathbf g,\mathbf u\rangle ), \mathbf v \rangle \,\,=\,\, 
\langle \mathbf u, \langle \mathbf f, \mathcal T^*\mathbf v \rangle \mathbf g \rangle$.
\end{proof}

\section{Spectrum of the Monodromy Operator}\label{sec:Spectrum}

In analogy with the Floquet theory of periodic solutions of nonlinear 
ordinary differential equations~\cite{teschl2012ordinary}, we expect that
the stability of  a periodically
stationary pulse solution, $\boldsymbol\psi$, of  a lumped laser model can be determined by
the spectrum, $\sigma(\mathcal M)$, of the monodromy operator. 
The spectrum of $\mathcal M$ is the union of the essential spectrum, 
$\sigma_{\text{ess}}(\mathcal M)$, and the 
eigenvalues~\cite{zweck2021essential}. 
In~\cite{shinglot2023essential,shinglot2022continuous}
we derived a formula for $\sigma_{\text{ess}}(\mathcal M)$. 
As in \cite{cuevas2017floquet,shinglot2022continuous},  we approximate $\sigma(\mathcal M)$
by the set of eigenvalues of a matrix approximation, $\mathbf M$,
 of the operator $\mathcal M: L^2(\mathbb R, \mathbb R^2) \to L^2(\mathbb R, \mathbb R^2)$. To do so, 
  we first truncate the domain, $\mathbb R$, to a finite interval,
  which we then discretize using $N$ equally-spaced 
points, $x_j$.  Then, any function, $\boldsymbol\psi  = (\psi_1, \psi_2)^T \in  L^2(\mathbb R, \mathbb R^2)$, is approximated by a vector 
$[ \psi_1(x_0), \psi_2(x_0), \cdots,  
\psi_1(x_{N-1}), \psi_2(x_{N-1}]^T \in \mathbb R^{2N}$. As a consequence,  
the operator $\mathcal M$ can be approximated by a linear transformation
$\mathbb M: \mathbb R^{2N} \to \mathbb R^{2N}$. To compute the matrix,
$\mathbf M$,
of $\mathbb M$ in the standard basis, we recall that for each
$k\in \{1,\cdots, 2N\}$, the $k$-th column of  $\mathbf M$ is given by the action
of $\mathbb M$ on the $k$-th standard basis vector, $\mathbf e_k \in 
\mathbb R^{2N}$. That is, using \eqref{eq:Linearization of the round trip operator},
the $k$-th column of  $\mathbf M$
is obtained by numerically solving the linearized equations given in 
\cref{sec:Linearization of the Round Trip Operator} for one round trip of the laser
with an initial condition given by $\mathbf e_k$.

In the remainder of this section we present some theoretical results
about the spectrum of $\mathcal M$.
The linear stability of a stationary pulse solution of the NLSE
is determined by the spectrum of the  linearized differential 
operator, $\mathcal L$. It is well known that $\mathcal L$  
has 
an eigenvalue with algebraic multiplicity four at $\lambda=0$, 
which is due to the phase and 
fast-time translation invariances of the NLSE~\cite{kaup1990perturbation}.  
In this section, we will show that a minor modification of the monodromy operator 
has a multiplicity two eigenvalue at $\lambda=1$. 
As in the case of the NLSE, these eigenvalues are due to the phase and time translation invariances of the lumped laser model.
In analogy with a result of  Haus and Mecozzi~\cite{JQE29p983}
for stationary pulses, we expect that perturbations which couple
into the corresponding eigenfunctions will result in shifts in the phase and position
of the pulse~\cite{JQE29p983}. A result of Lunardi for periodic
solutions of nonlinear parabolic equations~\cite{lunardi2012analytic} 
suggests that,
except for such phase and time shifts, a periodically stationary pulse solution of the lumped model
will behave stably if $\sup\{ | \lambda| \,: \, \lambda \in \sigma(\mathcal M), \lambda \neq 1 \} < 1$. However, we leave the precise formulation and proof of
such a result to a future paper.

We recall that a pulse, $\boldsymbol\psi$, is periodically stationary if $\mathcal R(\psi) = \mathbf R(\theta)\psi$
for some $\theta$, and that the optimization method
in \cref{sec:PeriodicPulse} computes the pair $(\boldsymbol\psi, \theta)$. 
Since Floquet theory only applies to  solutions that are actually periodic, 
we absorb the constant rotation, $\mathbf R(\theta)$, 
into $\mathcal R$ by defining a \emph{modified round trip operator} by
$\widetilde{\mathcal R} := \mathbf R(-\theta) \circ \mathcal R$, so that
$\widetilde{\mathcal R} \boldsymbol\psi =  \boldsymbol\psi$.
We also have a \emph{modified monodromy operator}, 
$\widetilde{\mathcal M} := \mathbf R(-\theta) \circ \mathcal M$.

\begin{proposition}\label{prop:phtr}
Let $\boldsymbol\psi$ be a periodically stationary pulse with 
${\mathcal R} \boldsymbol\psi =  \mathbf R(\theta)\boldsymbol\psi$,
 and 
suppose that $\boldsymbol\psi, \boldsymbol\psi_x \in L^2(\mathbb R, \mathbb R^2)$.
Let 
\begin{equation}
\mathbf u_{\operatorname{ph}} = \mathbf J \boldsymbol\psi
\end{equation}
be the $\pi/2$-rotation of $\boldsymbol\psi$ and let
\begin{equation}
\mathbf u_{\operatorname{tr}} =  \boldsymbol\psi_x
\end{equation}
be the $x$-derivative of $\boldsymbol\psi$. 
Let  $\widetilde{\mathcal M}= \mathbf R(-\theta) \circ \mathcal M$ be the modified monodromy operator. 
Then
\begin{equation}
\widetilde{\mathcal M}\,\mathbf u_{\operatorname{ph}}  = \mathbf u_{\operatorname{ph}} 
\qquad\text{and}\qquad
\widetilde{\mathcal M}\,\mathbf u_{\operatorname{tr}}  = \mathbf u_{\operatorname{tr}}.
\end{equation}
Consequently, $\lambda=1$ is an eigenvalue of $\widetilde{\mathcal M}$
with multiplicity
(at least) two.
\end{proposition}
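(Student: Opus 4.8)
The plan is to exploit the two continuous symmetries of the lumped laser model — global phase rotation and fast-time translation — and to recognize $\mathbf u_{\operatorname{ph}}$ and $\mathbf u_{\operatorname{tr}}$ as the infinitesimal generators of the orbits of $\boldsymbol\psi$ under these symmetries. First I would verify that the round trip operator is equivariant under each symmetry. Writing $\mathbf R(\phi)$ for the constant phase rotation and $T_a$ for the translation $(T_a\boldsymbol\psi)(x)=\boldsymbol\psi(x-a)$, I claim that
\begin{equation}
\mathcal R \circ \mathbf R(\phi) = \mathbf R(\phi) \circ \mathcal R
\qquad\text{and}\qquad
\mathcal R \circ T_a = T_a \circ \mathcal R.
\end{equation}
These are checked one component at a time using \eqref{eq:Round trip operator}. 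Each nonlinearity depends on $\boldsymbol\psi$ only through $\norm{\boldsymbol\psi}^2$, which is phase invariant, and $\mathbf J$ commutes with $\mathbf R(\phi)$; this gives phase equivariance of the NLSE, HME, SA, DCF, and OC transfer functions. Since all coefficients are independent of $x$ and the nonlocal energy $E(\boldsymbol\psi)=\int\norm{\boldsymbol\psi}^2\,dx$ is translation invariant, each component also commutes with $T_a$.

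Next I would differentiate each equivariance relation at the identity. Because $\mathcal M$ is by definition the Fréchet derivative of $\mathcal R$ at $\boldsymbol\psi$, I expand $\mathbf R(\epsilon)\boldsymbol\psi = \boldsymbol\psi + \epsilon\,\mathbf J\boldsymbol\psi + \mathcal O(\epsilon^2)$ and $T_\epsilon\boldsymbol\psi = \boldsymbol\psi - \epsilon\,\boldsymbol\psi_x + \mathcal O(\epsilon^2)$ in $L^2(\mathbb R,\mathbb R^2)$, apply $\mathcal R$ to each side, and match the $\mathcal O(\epsilon)$ terms. Using $\mathcal R(\boldsymbol\psi)=\mathbf R(\theta)\boldsymbol\psi$, this yields
\begin{equation}
\mathcal M(\mathbf J\boldsymbol\psi) = \mathbf J\,\mathbf R(\theta)\boldsymbol\psi
\qquad\text{and}\qquad
\mathcal M(\boldsymbol\psi_x) = \mathbf R(\theta)\boldsymbol\psi_x.
\end{equation}
Applying $\mathbf R(-\theta)$ and using that $\mathbf J$ and $\mathbf R(\theta)=\cos\theta\,\mathbf I+\sin\theta\,\mathbf J$ commute, I obtain $\widetilde{\mathcal M}\mathbf u_{\operatorname{ph}} = \mathbf R(-\theta)\mathbf J\mathbf R(\theta)\boldsymbol\psi = \mathbf J\boldsymbol\psi = \mathbf u_{\operatorname{ph}}$ and $\widetilde{\mathcal M}\mathbf u_{\operatorname{tr}} = \mathbf R(-\theta)\mathbf R(\theta)\boldsymbol\psi_x = \boldsymbol\psi_x = \mathbf u_{\operatorname{tr}}$, as required. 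To upgrade this to multiplicity (at least) two, I would check linear independence of $\mathbf u_{\operatorname{ph}}=\mathbf J\boldsymbol\psi$ and $\mathbf u_{\operatorname{tr}}=\boldsymbol\psi_x$: a nontrivial relation would force either $\boldsymbol\psi\equiv0$ or $\boldsymbol\psi_x=c\,\mathbf J\boldsymbol\psi$ for a constant $c$, the latter giving $\boldsymbol\psi(x)=\mathbf R(x/c)\boldsymbol\psi(0)$, which has constant norm and hence is not in $L^2$ unless $\boldsymbol\psi\equiv0$.

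I expect the main obstacle to be the differentiation step rather than the algebra: I must justify that $\epsilon\mapsto\mathbf R(\epsilon)\boldsymbol\psi$ and $\epsilon\mapsto T_\epsilon\boldsymbol\psi$ are differentiable in $L^2(\mathbb R,\mathbb R^2)$ with the stated derivatives, and that the chain rule applies to the nonlinear map $\mathcal R$. Differentiability of the translation group is exactly where the hypothesis $\boldsymbol\psi_x\in L^2$ enters, since $\lim_{\epsilon\to 0}\epsilon^{-1}(T_\epsilon\boldsymbol\psi-\boldsymbol\psi)=-\boldsymbol\psi_x$ holds in $L^2$ only when $\boldsymbol\psi$ has an $L^2$ derivative. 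The remaining care is in confirming that $\mathcal R$ is genuinely Fréchet differentiable at $\boldsymbol\psi$ with derivative $\mathcal M$ along these directions — in particular that the nonlocal saturable gain term in the HME does not spoil the first-order expansion — which is guaranteed by the smoothness and decay conditions on the pulse from~\cite{shinglot2023essential}.
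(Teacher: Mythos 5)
Your proposal is correct and takes essentially the same approach as the paper: both proofs differentiate the phase-rotation and fast-time-translation orbits through $\boldsymbol\psi$ and invoke the equivariance of the round trip operator under these two symmetries, the only cosmetic difference being that the paper works with $\widetilde{\mathcal R}$ directly (for which $\boldsymbol\psi$ is a fixed point) while you differentiate the relation for $\mathcal R$ and apply $\mathbf R(-\theta)$ at the end. Your explicit linear-independence check of $\mathbf J\boldsymbol\psi$ and $\boldsymbol\psi_x$ (which the paper leaves implicit) and your remarks on $L^2$-differentiability of the translation orbit are sound refinements of, not departures from, the paper's argument.
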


\begin{rem}
We call $\mathbf u_{\operatorname{ph}}$ the phase invariance eigenfunction
and $\mathbf u_{\operatorname{tr}}$ is the translation invariance 
eigenfunction. We note that the $\mathcal M$ itself does not generically have 
any eigenvalues on the unit circle. 
\end{rem}

\begin{rem}
The NLSE has the soliton solution 
\begin{equation}
\psi(t,x) \,\,=\,\, A \sech\{ A[x-x_0 + \Omega t]\} \, \exp\{ i [ \Phi + \tfrac 12
(A^2 - \Omega^2)t - \Omega x]\}.
\end{equation}
Just as in \cref{prop:phtr}, 
the phase  and fast-time invariances of the NLSE give rise
to two eigenvalues at zero with eigenfunctions given by 
$\psi_\Phi$ and $\psi_{x_0}$, respectively. (Here $\psi_p$ denotes the partial
derivative of $\psi$ with respect to a parameter, $p$.) In addition, if
$\mathcal L$ denotes the linearized operator, then
$\mathcal L \psi_A = A \psi_\Phi$ and $\mathcal L \psi_{\Omega} = A \psi_{x_0}$,
which gives rise to two Jordan blocks, one  associated with 
$\{\psi_\Phi, \psi_A\}$  and the second with $\{\psi_{x_0}, \psi_\Omega\}$~\cite{kaup1990perturbation}.
Consequently,   $\lambda=0$ is an eigenvalue with algebraic multiplicity four.
From another perspective,  for the NLSE,
 $\mathcal L$ is a real Hamiltonian operator, which implies that if $\lambda$ is an eigenvalue then so are $-\lambda$ and 
$\pm \overline{\lambda}$~\cite{kevrekidis2009discrete}.  
However, in our situation, although the monodromy operator, 
$\mathcal M$, is real it is not Hamiltonian and the Jordan blocks
involving  the amplitude and frequency eigenfunctions do not exist.
Consequently, the
eigenvalue at $\lambda=1$ only has algebraic multiplicity two.  
Furthermore,  we recall that 
when one linearizes an autonomous ordinary differential equation
about a time periodic solution the resulting monodromy operator
always has an eigenvalue $\lambda=1$ due to the time-invariance of the
nonlinear equation~\cite{teschl2012ordinary}.
In the context of the  Kuznetsov-Ma breather solution of the NLSE
this corresponds
 to an additional pair of eigenvalues at $\lambda=1$~\cite{cuevas2017floquet}.
However, this phenomenon does not occur in our context as the lumped model we are studying is not autonomous.
\end{rem}

\begin{proof}
First,  let $\boldsymbol\psi_\epsilon$ be the perturbation of 
$\boldsymbol\psi$ given by the phase rotation 
$\boldsymbol\psi_\epsilon =  \mathbf R(\epsilon) \boldsymbol\psi_0$, and let
$\mathbf u := \lim\limits_{\epsilon\to 0} \frac{\boldsymbol\psi_\epsilon -
\boldsymbol\psi}{\epsilon}$.  Then 
$\mathbf u =  \mathbf R'(0) 
\boldsymbol\psi = \mathbf J \boldsymbol\psi_0$ is a $\pi/2$-rotation
of $\boldsymbol\psi$. On the other hand, by the phase-shift invariance of each of
the  nonlinear operators, $\mathcal P$, we have that
\begin{equation}
\widetilde{\mathcal M}(\mathbf u) 
\,\,=\,\,  
\lim\limits_{\epsilon\to 0} \frac{\widetilde{\mathcal R}(\boldsymbol\psi_\epsilon) -
\widetilde{\mathcal R}(\boldsymbol\psi)}{\epsilon} 
\,\,=\,\,  
\lim\limits_{\epsilon\to 0} \frac{\boldsymbol\psi_\epsilon - \boldsymbol\psi}{\epsilon}
\,\,=\,\,  
\mathbf u.
\label{eq:MuRu}
\end{equation}
If instead we let   $\psi_\epsilon$ be the time translation of $\boldsymbol\psi$
given by $\boldsymbol\psi_\epsilon(x) = \boldsymbol\psi(x+\epsilon)$, then
$\mathbf u = \boldsymbol\psi_x$ is the $x$-derivative
of $\boldsymbol\psi$ and because of the fast-time translation invariance
of all the operators, $\mathcal P$, we again obtain~\eqref{eq:MuRu}.
\end{proof}

Since $\widetilde{\mathcal M} : L^2(\mathbb{R}, \mathbb{R}^2) \to L^2(\mathbb{R}, \mathbb{R}^2)$ is a real operator, the elements of the spectrum
are either real or come in complex conjugate pairs. 
In~\cite{shinglot2023essential}, we proved that under reasonable
assumptions on the system parameters and on the smoothness and decay  of the pulse,  the essential spectrum is given by 
\begin{equation}
    \sigma_{\operatorname{ess}}(\widetilde{\mathcal{M}}) \,\,=\,\, \{\, \lambda_{\pm}(\omega) \in \mathbb{C} \,\, | \,\, \omega \in \mathbb{R} \,\} \cup \{ 0 \},
    \label{eq:EssSpec1}
\end{equation}
where
\begin{equation}
    \lambda_{\pm}(\omega) = \ell_{\operatorname{OC}}
    (1-\ell_0) \exp\left\{ \frac 12 \left( 1 - \frac{\omega^2}{\Omega_g^2} \right)\int_0^{L_{\operatorname{FA}}} g(\psi(t))dt \right\} \exp\left\{ \pm i
    (\tfrac{\beta_{\text{RT}}}2  \omega^2 - \theta) \right\}.
    \label{eq:EssSpec2}
\end{equation}
Here, 
$\beta_{\text{RT}} = \beta_{\text{SMF1}}\text{L}_{\text{SMF1}} + \beta_{\text{FA}}\text{L}_{\text{FA}} + \beta_{\text{SMF2}}\text{L}_{\text{SMF2}} + \beta_{\text{DCF}}$ is the round trip dispersion. Geometrically, 
$ \sigma_{\operatorname{ess}}(\mathcal{M})$ is a pair of counter-rotating 
spirals which have a Gaussian decay  in the radial direction. 
In~\cite{shinglot2023essential,shinglot2022continuous}, we discuss conditions which guarantee that the essential
spectrum is stable.

 \section{Simulation Results}\label{sec:SimResults}
 
For the simulation results we present here, we choose the parameters 
in the model to be similar to those in the experimental stretched pulse laser of Kim~\cite{kim2014sub}. The saturable absorber is modeled by 
\eqref{eq:Saturable absorber} with $\ell_0 = 0.2$
and $P_\text{sat} = 50$~W. 
The saturable absorber is followed by a segment of single mode fiber, SMF1, 
modeled by \eqref{eq:SMF_JZ}, with $\gamma=2\times10^{-3}$~(Wm)$^{-1}$,
$\beta_{\operatorname{SMF1}}=10$~kfs$^2$/m, (1~kfs$^2$ = $10^{-27}$~s$^2$), and $L_{\operatorname{SMF1}}=0.32$~m,
a fiber amplifier,  modeled by \eqref{eq:fiber amplifier},  with 
$g_0=6$m$^{-1}$, $E_{\text{sat}}=200$~pJ, $\Omega_g=50$~THz, $\gamma=4.4\times10^{-3}$~(Wm)$^{-1}$,
$\beta_{\operatorname{FA}}=25$~kfs$^2$/m, and $L_{\operatorname{FA}}=0.22$~m, and
a second segment of single mode fiber, SMF2, with the same parameters as SMF1, but with $L_{\operatorname{SMF2}}=0.11$~m.
The dispersion, $\beta_{\operatorname{DCF}}$, of the dispersion compensation element is chosen so that the round trip dispersion is $\beta_{\text{RT}}=-1~$kfs$^2$.
Finally the 50\% output coupler is modeled by \eqref{eq:Output coupler} 
with $\ell_{\operatorname{OC}}=\sqrt{0.5}$.
Unless otherwise stated, we used a time window $-L_X/2 \leq x \leq L_X/2$
of size $L_X = 10$~ps discretized with $N=512$ points. 

The algorithms were implemented in MATLAB. We used the 
quasi-Newton BFGS algorithm~\cite{wright2006numerical} as implemented in the function \textsf{fminunc} to find the optimal pulse. 
In particular, the optimization algorithm is provided with the gradient
of the objective function,  computed using the adjoint state method
described in \cref{thm:GradientPoincareMap}.
The computational time to perform the optimization and compute 
the monodromy matrix, $\mathbf M$,  and its spectrum on a 3.5~GHz
Macbook Pro is about 3 minutes. The computation of $\mathbf M$ was done in parallel using 12 processors.

We begin by discussing the accuracy of the numerical solvers for the 
roundtrip operator, $\mathcal R$, and the linearization, $\mathcal M$, of $\mathcal R$.
For these results we use two error measures, the absolute error
\begin{equation}
\mathcal E_{\text{abs}}(\boldsymbol\psi_{\text{approx}},\boldsymbol\psi_{\text{exact}})  \,\,=\,\,
 \left [ \int \| \boldsymbol\psi_{\text{approx}}(x) - \boldsymbol\psi_{\text{exact}} (x)\|_{\mathbb R^2}^2 \, dx\right]^{1/2},
\end{equation}
and the relative error
\begin{equation}
\mathcal E_{\text{rel}}(\boldsymbol\psi_{\text{approx}},\boldsymbol\psi_{\text{exact}})
\,\,=\,\,
\frac{\mathcal E_{\text{abs}}(\boldsymbol\psi_{\text{approx}},\boldsymbol\psi_{\text{exact}})}{
E(\boldsymbol\psi_{\text{exact}})^{1/2}},
\end{equation}
where the pulse energy, $E(\boldsymbol\psi_{\text{exact}})$, is 
given by \eqref{eq:PulseEnergy}.

For this study we used an initial pulse, $\boldsymbol\psi_0$, 
obtained by propagating a Gaussian pulse for ten round trips
of the system. The Gaussian was given by
\begin{equation}
g(x) \,\,=\,\,
\sqrt{P_0} \exp( -(x/\sigma)^2)
\label{eq:Gaussian}
\end{equation}
where $\sigma = \text{FWHM}/2 \sqrt{\log 2}$. By choosing
a peak power of $P_0 = 400$~W
and a full width at half maximum of $\text{FWHM}=300$~fs,
we obtained a reasonable approximation, $\boldsymbol\psi_0$,
to a periodically stationary pulse. 

To assess the accuracy of the numerical solver for the roundtrip operator, $\mathcal R$,
we first computed an exact solution by propagating the initial pulse,
$\boldsymbol\psi_0$, for one roundtrip of the system with a step-size of 
$\Delta t = 10^{-4}$. We then computed approximate solutions 
using step sizes of $\Delta t = 10^{-2}$, $5\times 10^{-3}$, $2\times 10^{-3}$, 
$10^{-3}$, $5\times 10^{-4}$, and $2\times 10^{-4}$, and computed the error between the approximate and exact solutions.
In the left panel of \cref{fig:ErrorVsDeltaz}, we plot the absolute error
in units of J$^{1/2}$ as a function of $\Delta t$. The portion of the curve
with $\Delta t \geq 10^{-3}$ has  a slope of 
$4.02$ as expected for the globally fourth-order method we used.
The floor below an error level of $10^{-16}$ is due to  round-off error,
primarily of the Fourier transform.
The relative error is  approximately $10^5$ times larger than the absolute error. 
So, for example,  $\mathcal E_{\text{rel}}=2.9\times 10^{-8}$ when $\Delta t = 10^{-2}$.

In the center panel of \cref{fig:ErrorVsDeltaz}, we show the corresponding results for the linearized operator,
$\mathcal M$. For each choice of time step, we linearized $\mathcal R$ 
about the pulse obtained
by propagating  $\boldsymbol\psi_0$ with a step-size of $\Delta t$, and 
we chose the initial pulse for the linearized operator to be the phase invariance eigenfunction, $\mathbf u_0 = i\boldsymbol\psi_0$. 
In this case, the plot also has a slope of $4.02$ where 
$\Delta t \geq 10^{-3}$, and 
$\mathcal E_{\text{rel}}=2.9\times 10^{-8}$ when $\Delta t = 10^{-2}$.
For the  translation invariance eigenfunction,
$\mathbf u_0 = \Delta_x \boldsymbol\psi_0$,
for $\Delta t \geq 10^{-3}$, the slope (not shown)  is the same as for the 
phase invariance eigenfunction but the absolute errors are about twice as large.

In the right panel of \cref{fig:ErrorVsDeltaz}, we show the corresponding results for the adjoint of linearized operator,
$\mathcal M^*$. 
For these results, we chose the initial pulse to be $\mathbf v_0 = 
\mathcal R(\boldsymbol\psi_0) - e^{i\theta} \boldsymbol\psi_0$,
where $\boldsymbol\psi_0$ is computed with $\Delta t = 10^{-4}$
and $\theta$ is the angle between $\boldsymbol\psi_0$ and
$\mathcal R(\boldsymbol\psi_0)$. We note that $\max | \boldsymbol\psi_0 | 
= 16.2$ and $\max | \mathbf v_0 | = 1.2$.
Once again, the plot has a slope of $4.02$ where 
$\Delta t \geq 10^{-3}$, and 
$\mathcal E_{\text{rel}}=8.7\times 10^{-8}$ when $\Delta t = 10^{-2}$.

\begin{figure}[H]
    \centering
    \includegraphics[width=0.32\linewidth]{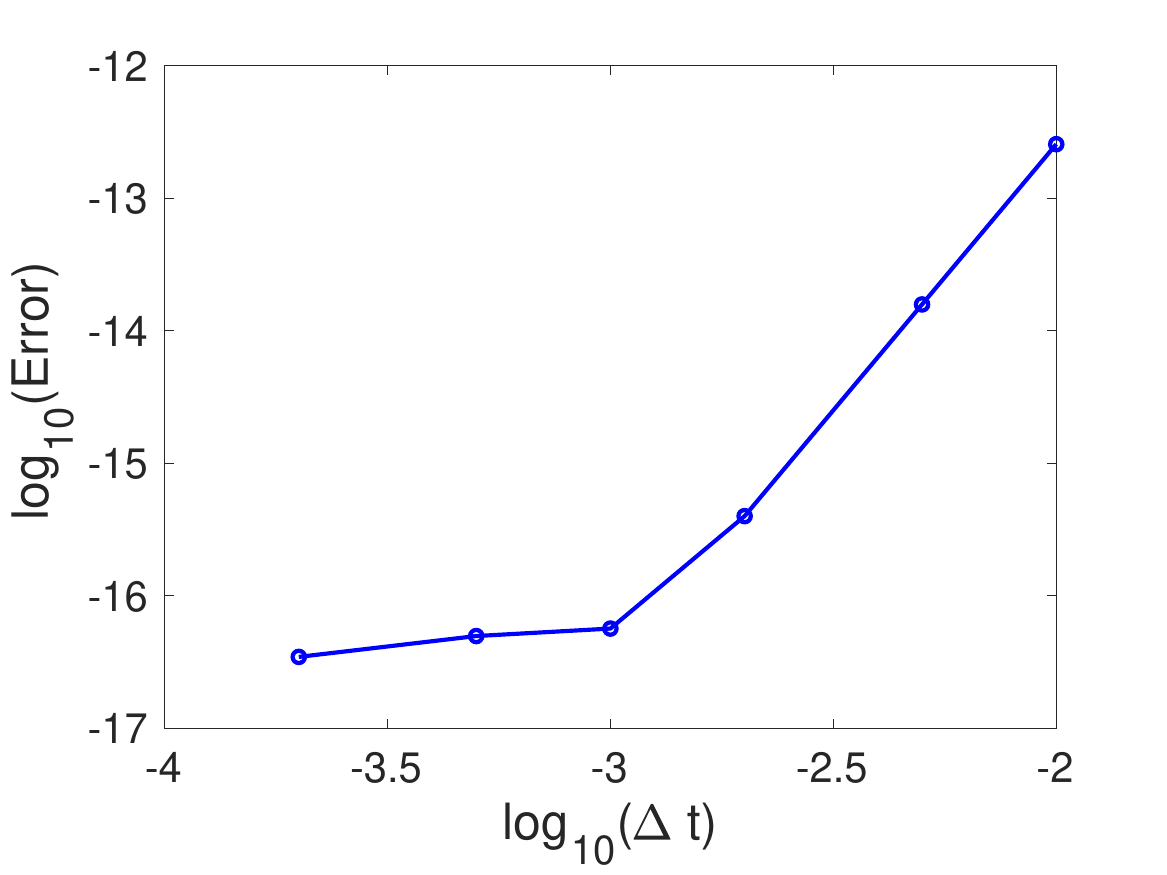}
    \includegraphics[width=0.32\linewidth]{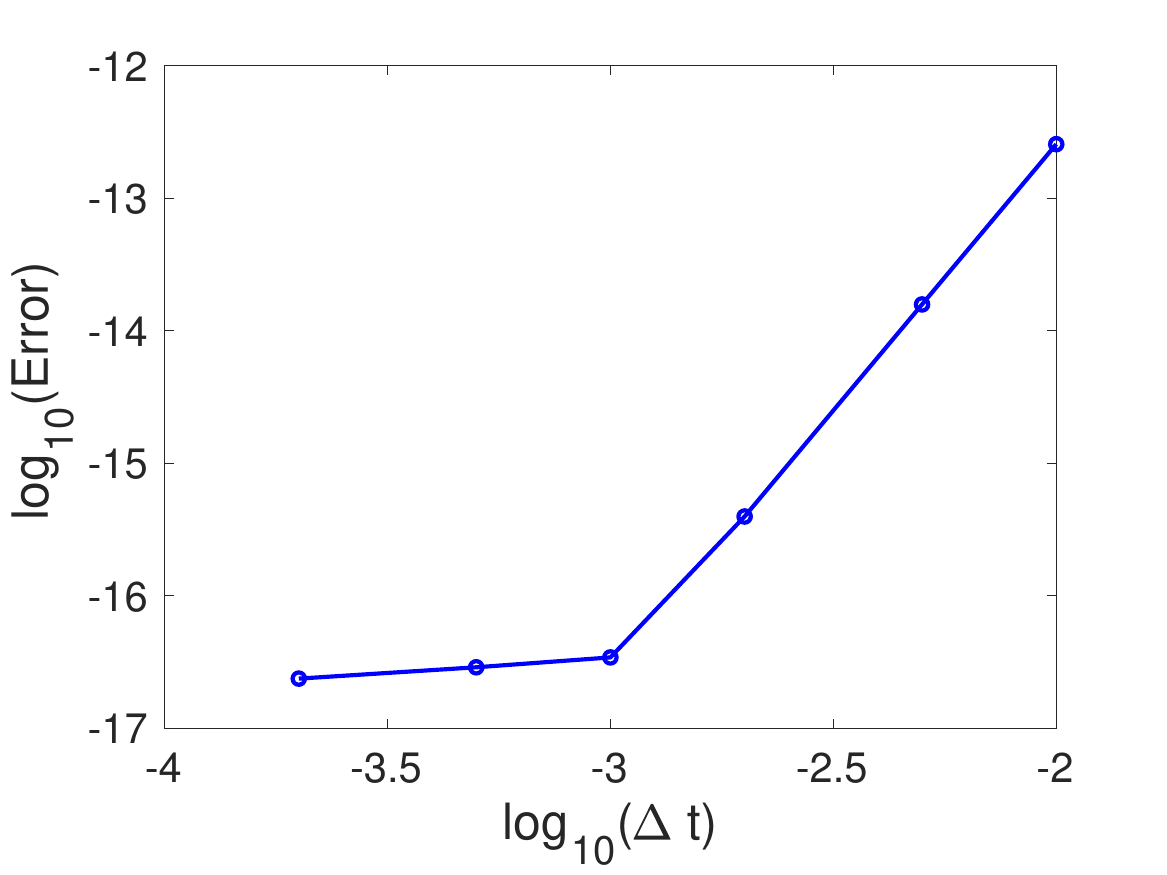}
    \includegraphics[width=0.32\linewidth]{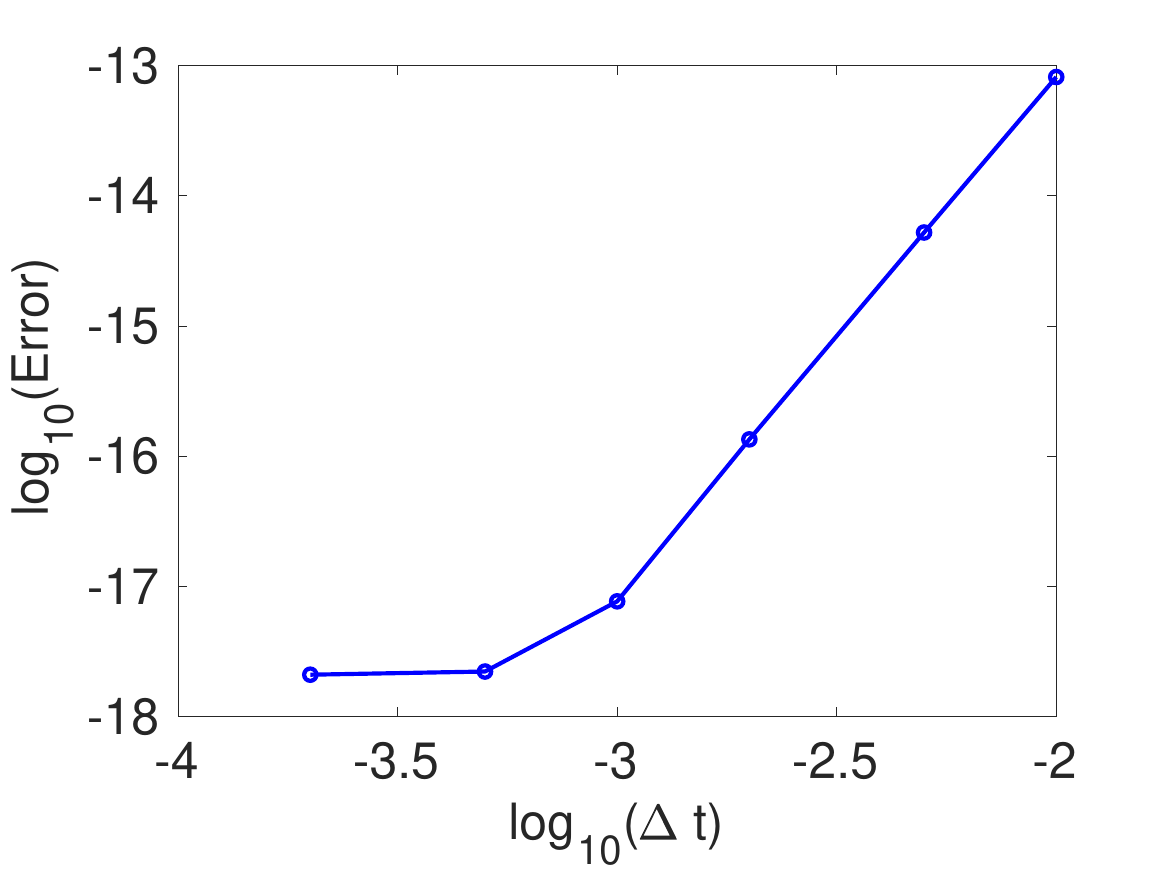}    
    \caption{Absolute error between an exact solution 
    (as computed with $\Delta t = 10^{-4}$) and
  approximate solutions with step size $\Delta t$
    for propagation over one round trip of the laser.
    Left: Result for the roundtrip operator, $\mathcal R$.
    Center: Result for the linearization, $\mathcal M$, of $\mathcal R$.
    Right: Result for the adjoint linearization, $\mathcal M^*$.
    }
    \label{fig:ErrorVsDeltaz}
\end{figure}

Even though the linearized round trip solver has the correct order of accuracy it is
nevertheless possible that the solution is not correct. To verify that 
the linearized round trip operator has been correctly derived and implemented
we must verify that
\begin{equation}
\mathcal M(\mathbf u_0) \,\,=\,\, \lim\limits_{\epsilon\to 0}
\frac{\mathcal R(\boldsymbol\psi_0 + \epsilon \mathbf u_0) - \mathcal R(\boldsymbol\psi_0)}{\epsilon} \,\,=:\,\, D_{\boldsymbol\psi_0}\mathcal R
(\mathbf u_0).
\label{eq:MviaRFinDiff}
\end{equation}
If we let $f(\epsilon) := \mathcal R(\boldsymbol\psi_0+\epsilon \mathbf u_0)(x) : \mathbb R \to\mathbb R^2$, then the directional derivative is given by 
$D_{\boldsymbol\psi_0}\mathcal R(\mathbf u_0)(x) = f'(0)$.
Due to round-off errors, standard finite difference approximations of $f'(0)$ 
are not accurate when $\epsilon$ is small. 
A commonly employed method is to use a complex step derivative approximation~\cite{lyness1967numerical}, 
which requires that $f$ is real valued. However 
this is not actually the case for the numerically computed 
$f$ because of small imaginary round-off errors in the computation
of the discrete Fourier transforms. Instead, we use a
spectral differentiation method of Fornberg~\cite{fornberg1981numerical}.
With this method, Cauchy's integral formula is applied to show that if $f:\mathbb C\to\mathbb C$ is  complex analytic
in a disc of radius  $R$ about a point $z_0\in \mathbb C$ then, for any $r\in [0,R]$,
\begin{equation}
f'(z_0) = \frac{1}{2\pi r}\int_0^{2\pi} F(t) e^{-it}\, dt,
\end{equation}
where $F(t) = f(z_0+re^{it})$.
Then $f'(z_0) = \frac{c_1}r$ where $c_1$ is the first Fourier coefficient in the Fourier series of $F$.
Using a discrete Fourier transform approximation with $M$ points, we find that
\begin{equation}
f'(z_0) \,\,\approx\,\, \frac 1{rM} \sum\limits_{m=0}^{M-1} F_m w^{-m},
\end{equation}  
where $w= e^{i2\pi/M}$ and   $F_m = F(w^m)$.

\begin{figure}[!tbh]
    \centering
    \includegraphics[width=0.3\linewidth]{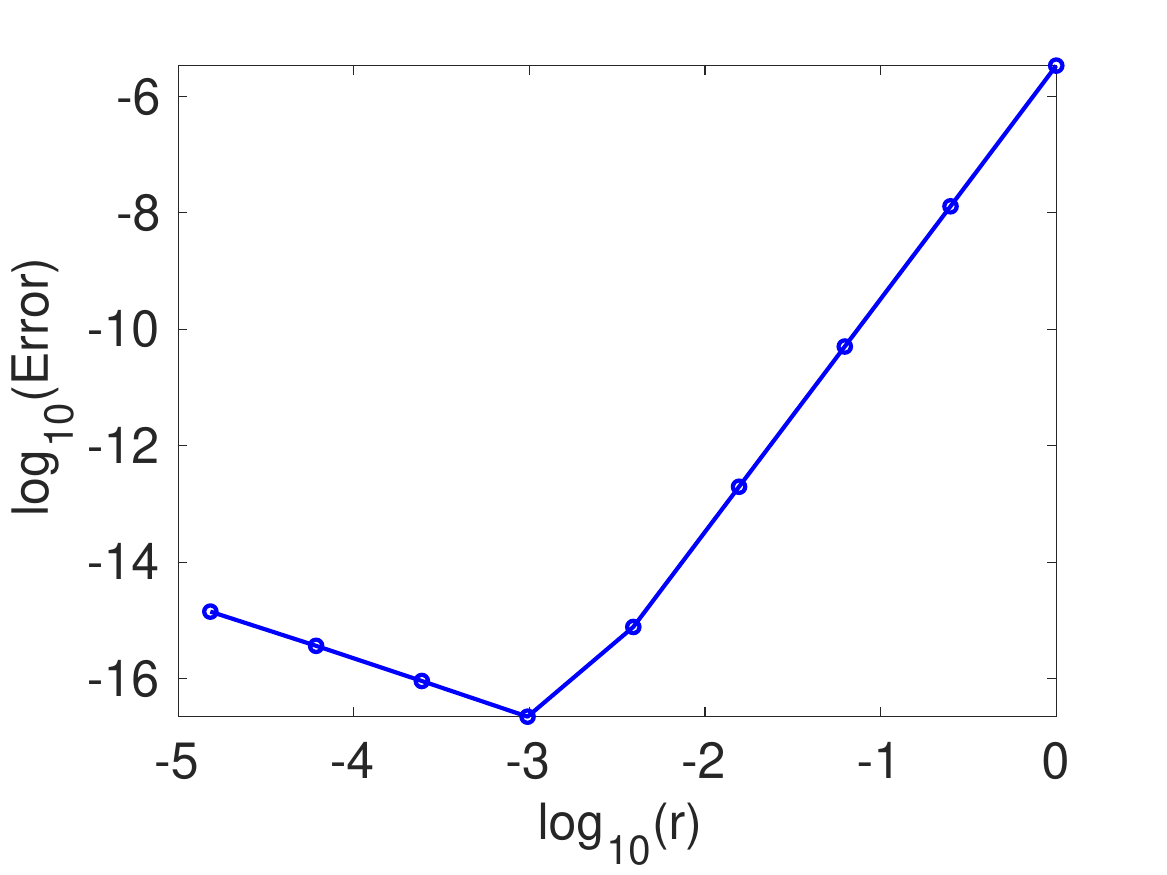}
    \hskip20pt
    \includegraphics[width=0.3\linewidth]{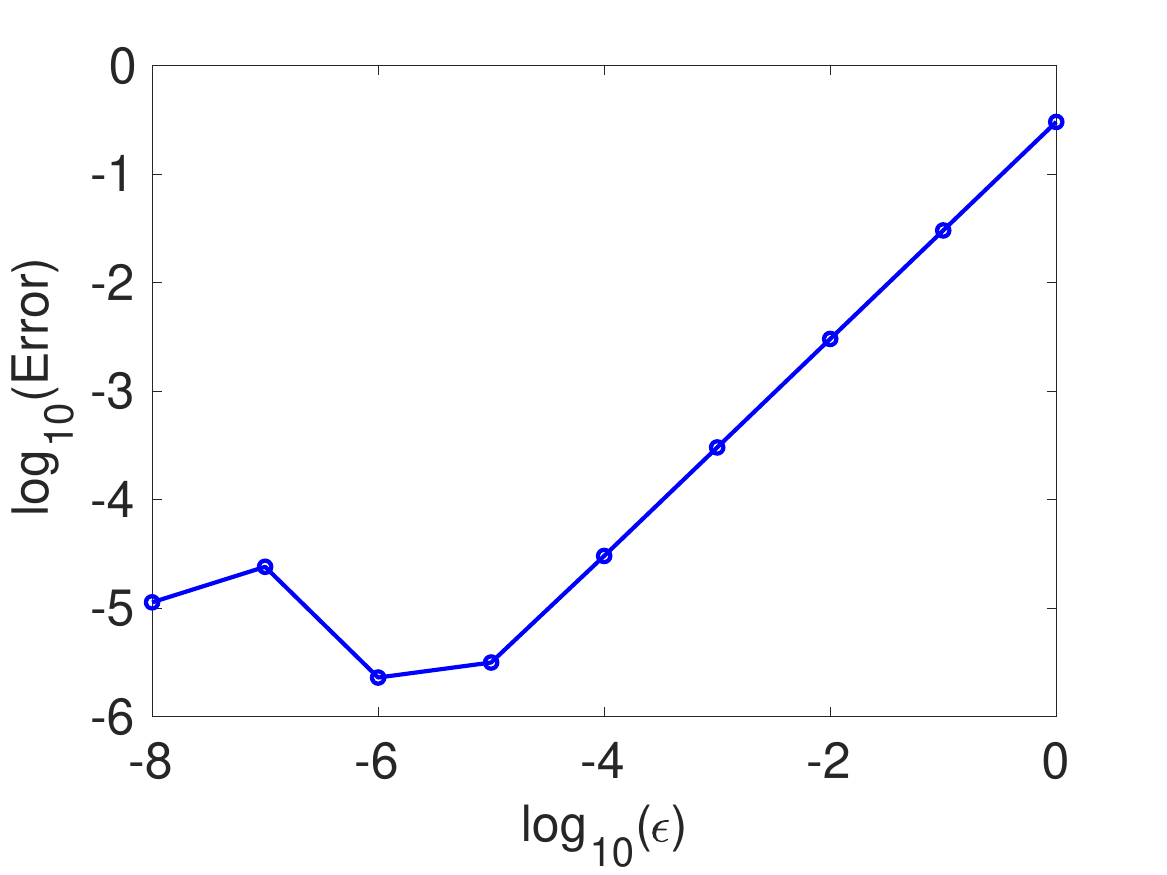}
    \caption{Left: Absolute error between the numerical solution of the linearized operator, $\mathcal M(\mathbf u_0)$,
     and the spectral  approximation  of the directional derivative, 
     $D_{\boldsymbol\psi_0}\mathcal R(\mathbf u_0)$, for the theoretical phase-shift 
     eigenfunction, $\mathbf u_0 = \mathbf J\boldsymbol\psi_0$.
     Right: Relative error between the
directional derivative of $\mathcal E$ computed in terms of the 
adjoint of $\mathcal M$ via \eqref{eq:Variational derivative of Poincare map} and \eqref{eq:Gradient of Poincare map wrt pulse}
and using a finite difference with increment, $\epsilon$.}
    \label{fig:SDError}
\end{figure}

To verify \eqref{eq:MviaRFinDiff},  we first extended 
$ f$ to a vector-valued complex analytic function $ f: \mathbb C \to\mathbb C^2$~\cite{martins2003complex}. 
To minimize the truncation error in the discretization of the Fourier series
we need $M$ to be sufficiently large. For the results presented here it was sufficient to choose $M=4$. Furthermore, to avoid round-off error
in the computation of the $F_m$ we do not want  $r$ to be too small~\cite{Boisgerault}.
 In the left panel of \cref{fig:SDError}, for the phase invariance eigenfunction,
 $\mathbf u_0 = i\boldsymbol\psi_0$,
 we plot the absolute error 
between $\mathcal M(\mathbf u_0)$ and the spectral derivative approximation of $f'(0)$ as a function of $r$. The minimum error is $2.4\times10^{-17}$ at  $r=2^{-10}$. 
Similar results were obtained for the translation invariance eigenfunction,
$\mathbf u_0 = \Delta_x\boldsymbol\psi_0$. These results were obtained using a time step of $\Delta t = 10^{-2}$. Decreasing the time step to $\Delta t = 10^{-3}$
we obtained a similar plot, except that the minimum error increased to 
$7.8\times10^{-17}$,  likely due to the larger accumulation of roundoff errors
in the numerical solution of the system model.

As a second test of the adjoint solver we examine the accuracy of 
the computation of the directional derivative, $D_{\boldsymbol\psi_0}
\mathcal E (\mathbf u_0)
= \langle \frac{\delta\mathcal E}{\delta \boldsymbol\psi_0}\, , \, 
\mathbf u_0 \rangle$, where 
the variational derivative, $\frac{\delta\mathcal E}{\delta \boldsymbol\psi_0}$, 
is given in terms of the
adjoint of $\mathcal M$ by \eqref{eq:Gradient of Poincare map wrt pulse}. For simplicity,  for this verification we approximate the directional derivative using a finite difference. So that roundoff errors do  not dominate, 
we need to ensure that the directional derivative is nonzero.
To ensure the variational derivative is not too close to zero we 
choose $\boldsymbol\psi_0$ to be a Gaussian with FWHM = 50~fs and $P_0 = 200$~W, which is not a periodically stationary pulse. 
In addition, we choose $\mathbf u_0$ so that the $L^2$-inner product is not zero. 
In the right panel of \cref{fig:SDError}, we show the relative error between the
directional derivative computed using a finite difference with increment, $\epsilon$, and the computation based on the adjoint of $\mathcal M$. 
For $\epsilon > 10^{-5}$, the slope of the error plot is 0.997, as expected
for a standard finite difference, which provides strong evidence for the
accuracy of the implementation  of the gradient of $\mathcal E$.

\begin{figure}[t]
    \centering
    \includegraphics[width=0.49\linewidth]{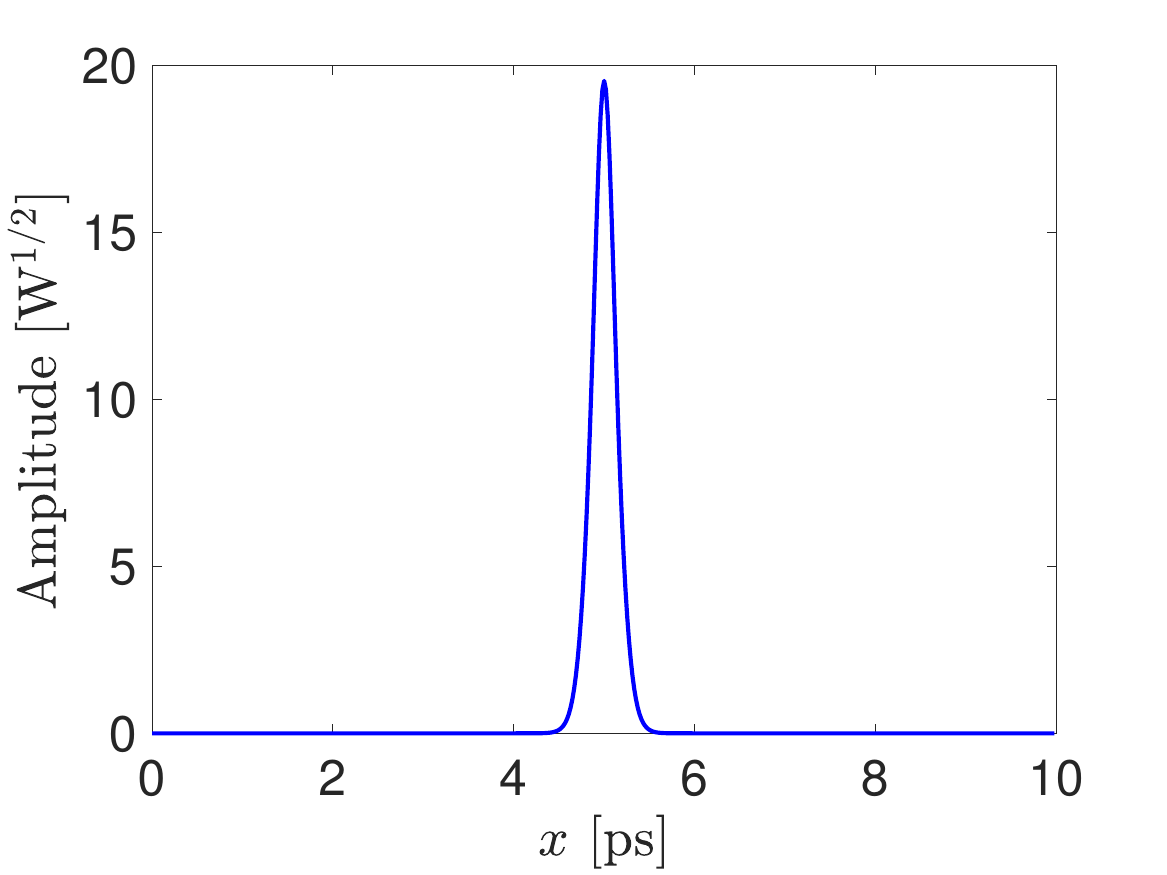}
    \includegraphics[width=0.5\linewidth]{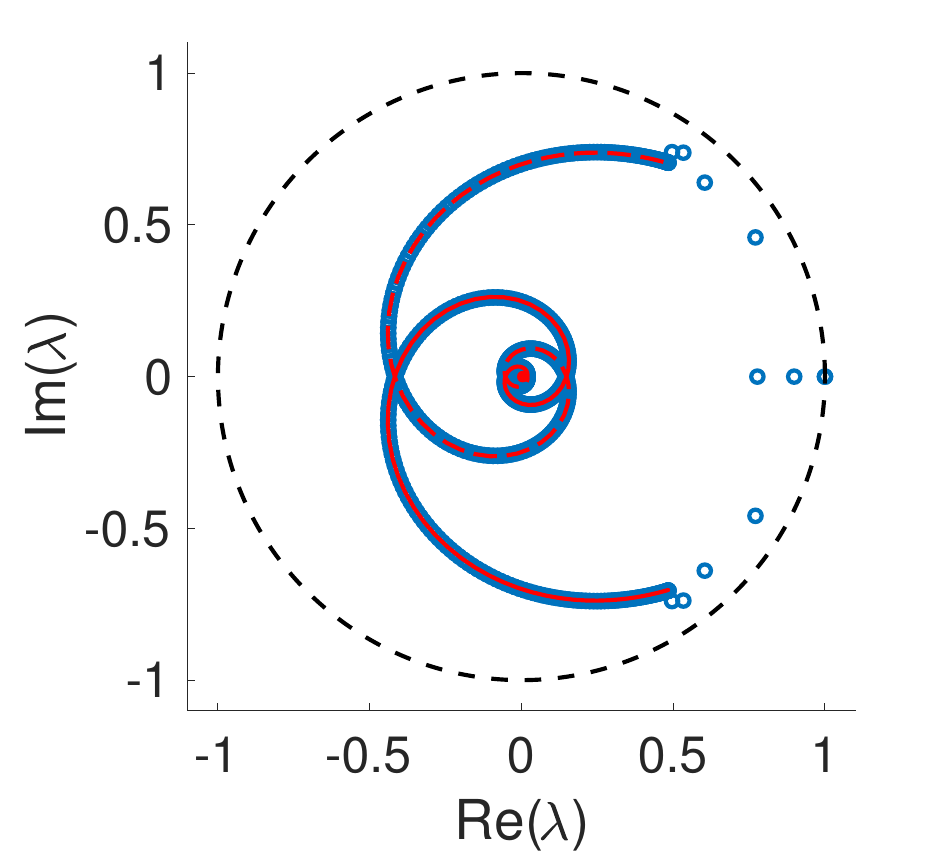}
      \caption{Left: Optimal pulse for the 
    periodically stationary pulse obtained using the parameters given 
    at the beginning of \cref{sec:SimResults}.
    Right:
    Spectrum of the monodromy operator for the optimal pulse shown in the left panel.  The eigenvalues of the discretized 
    operator are shown with blue circles and the essential spectrum obtained
    using \eqref{eq:EssSpec2} is shown with the solid red line.  
    }
    \label{fig:Spectrum}
\end{figure}

In  \cref{fig:Spectrum} (left panel), we show the instantaneous power
of the optimal pulse after the output coupler (also see \cref{fig:Short pulse laser model}, right, for a plot showing the evolution of this pulse through the laser).
The initial pulse for the optimization was obtained by evolving the Gaussian
~\eqref{eq:Gaussian} for
ten round trips at which point the value of the objective function in \eqref{eq:PoinObjFun}
was $\widetilde{\mathcal E}=8\times 10^{-3}$, which the optimization method then reduced 
to $\widetilde{\mathcal E}=5\times 10^{-27}$ in 36 iterations.
 In \cref{fig:Spectrum} (right panel), we show the numerically
computed  spectrum of the modified monodromy operator, 
$\widetilde{\mathcal M}$, with blue circles.  
A portion of this spectrum agrees with the essential spectrum obtained using \eqref{eq:EssSpec2}, which is shown with the solid red line. 
In addition, counting multiplicities, there are 12 eigenvalues that are not part of the essential spectrum. We label them, $\lambda_1, \cdots, \lambda_{12}$,
in order of decreasing magnitude.
First, there
is a multiplicity two eigenvalue at $\lambda=1$, which agrees with the
theoretical predictions in \cref{prop:phtr}.
The error in the phase invariance eigenvalue is  $10^{-13}$,
while that in the translation invariance eigenvalue is  $4\times10^{-11}$.
In \cref{fig:PhaseShiftEigenfunctions} we plot the amplitude,
$A(x) :=\| \mathbf u(x)\|_{\mathbb R^2}$,  of the 
corresponding 
phase invariance eigenfunction (left panel) and translation invariance 
eigenfunction (right panel). The numerically computed eigenfunctions
are shown with the blue dots and the (normalized) theoretical eigenfunctions
in \cref{prop:phtr} are shown with black solid lines. 
 The excellent agreement with both the essential spectrum and the theoretically predicted eigenvalues and eigenfunctions at $\lambda=1$ provides strong validation of the  numerical method.

There are two additional eigenvalues on the real axis at
$\lambda_5=0.8987$ and  $\lambda_{12}=0.7773$. 
The amplitude of the eigenfunction corresponding to
 $\lambda_5$, which  is shown with the red-dashed line in 
 the right panel of \cref{fig:PhaseShiftEigenfunctions},
 is very similar to the translation invariance eigenfunction.
 Similarly, the eigenfunction corresponding to $\lambda_{12}$,
 which  is shown with the red-dashed line in 
 the left panel of \cref{fig:PhaseShiftEigenfunctions},
 is very similar to the phase invariance eigenfunction.
 Finally, there are four eigenvalues near the edge of the upper arm
 of the essential spectrum. The corresponding eigenfunctions 
 are shown in \cref{fig:EigenfunctionsCloseFar}. We observe that the number of oscillations in the amplitude of these 
 eigenfunctions increases as the distance from the eigenvalue 
to the edge of the essential spectrum decreases.
 
 \begin{figure}[!t]
    \centering
    \includegraphics[width=0.49\linewidth]{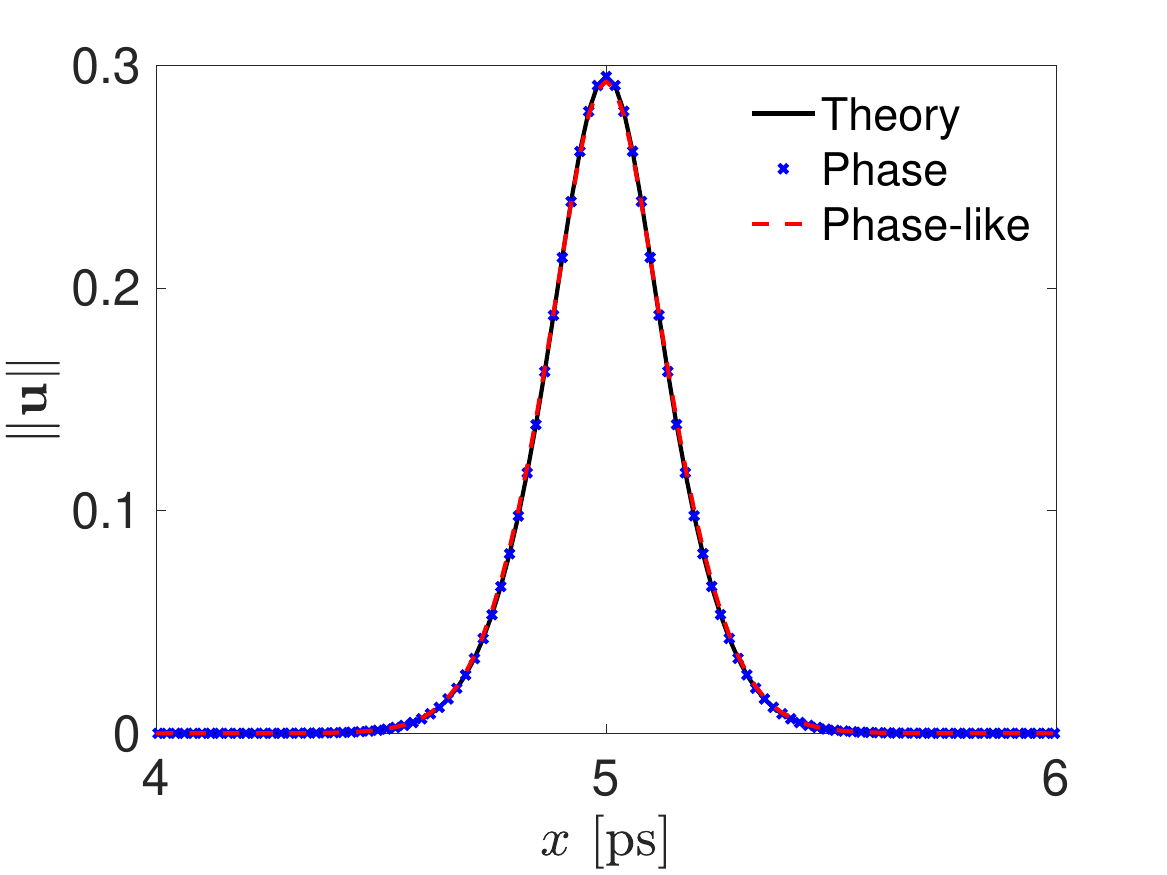}
    \includegraphics[width=0.49\linewidth]{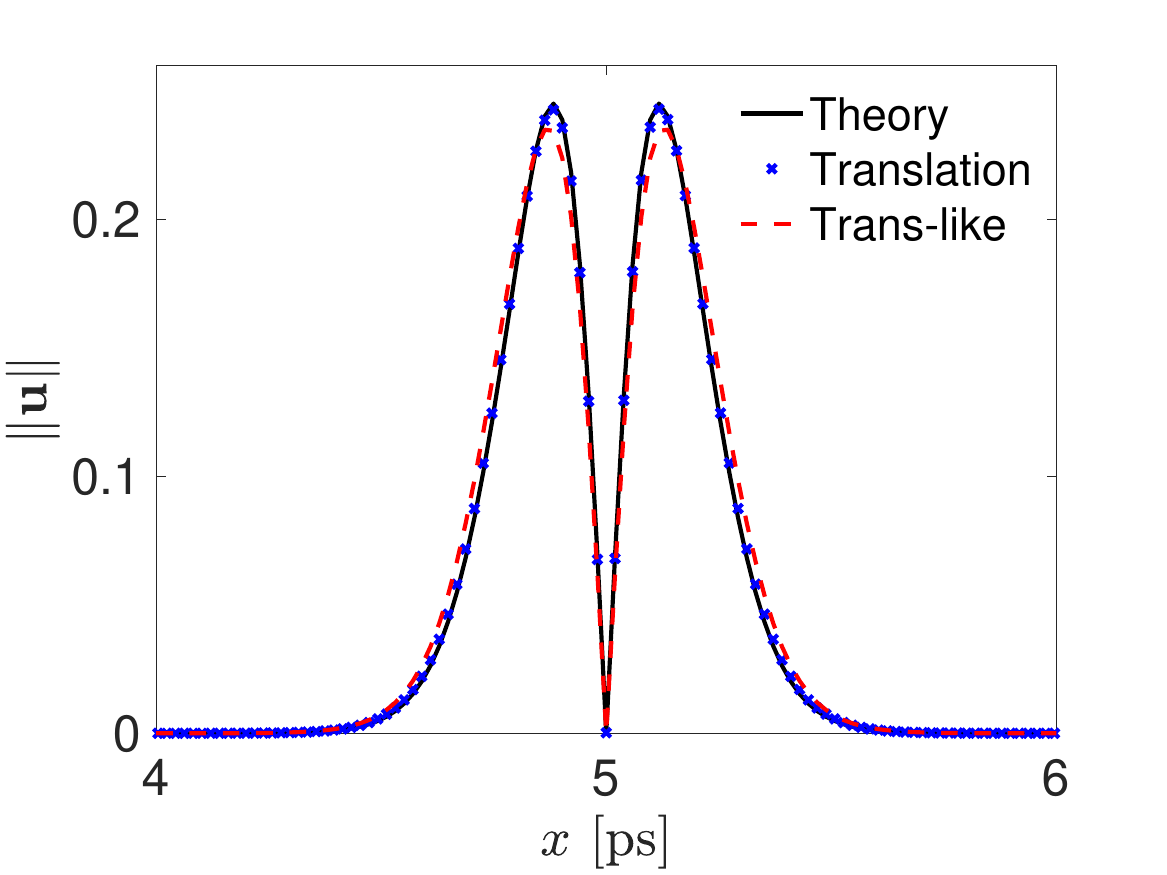}
    \caption{Left: Phase invariance eigenfunctions:
    theoretical  (black solid line) and numerical (blue dots) eigenfunctions
    with $\lambda=1$, and numerical eigenfunction corresponding to 
    $\lambda=0.7773$ (red dashed line).
     Right: Translation invariance eigenfunctions:
    theoretical  (black solid line) and numerical (blue dots) eigenfunctions
    with $\lambda=1$, and numerical eigenfunction corresponding to 
    $\lambda=0.8987$ (red dashed line).
    }
    \label{fig:PhaseShiftEigenfunctions}
\end{figure}
 
 \begin{figure}[!tbh]
    \centering
        \includegraphics[width=0.49\linewidth]{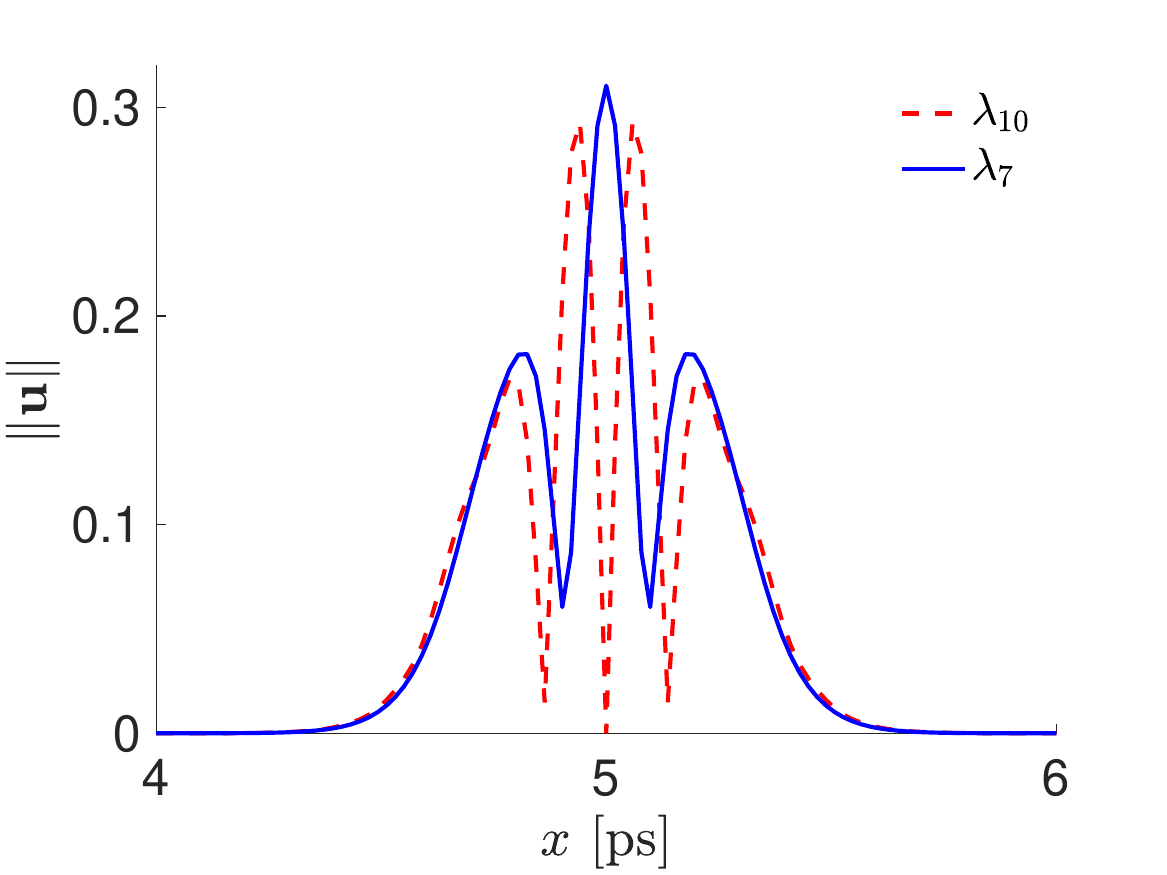}
            \includegraphics[width=0.49\linewidth]{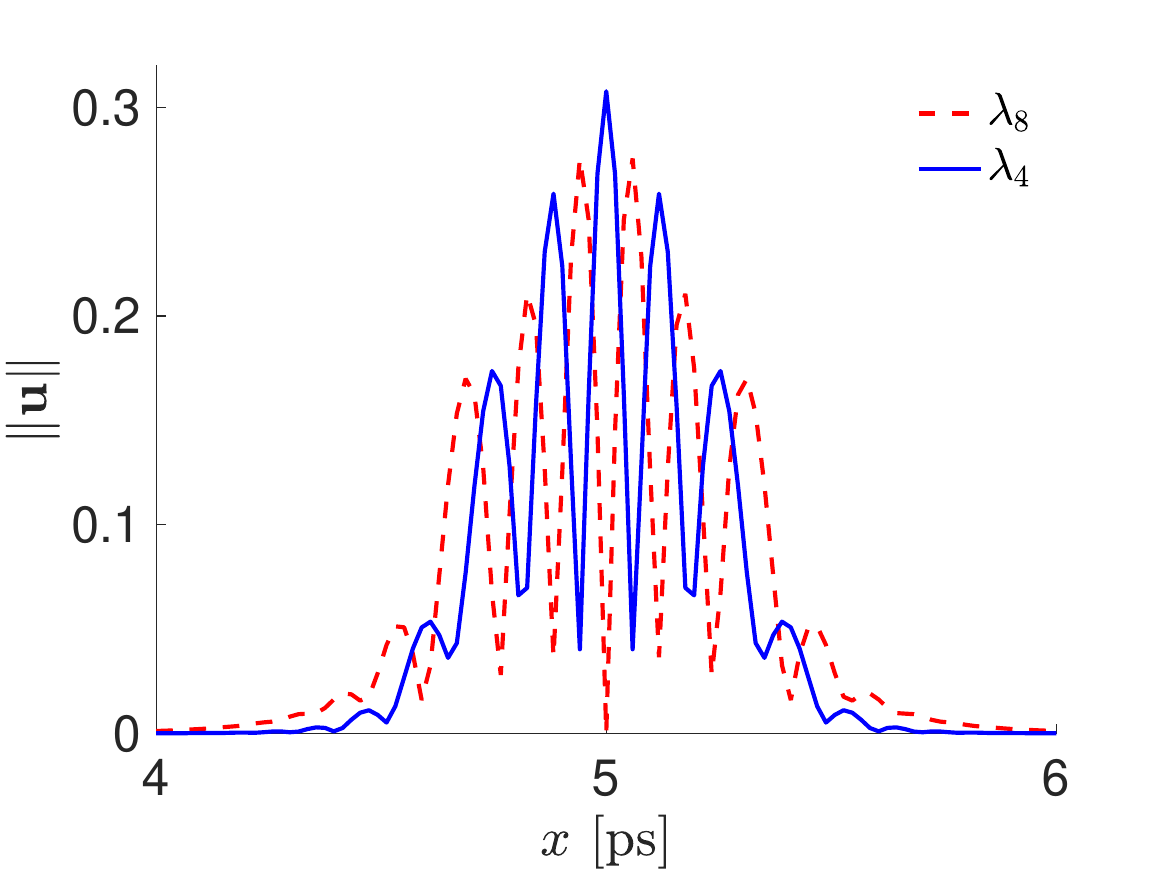}
    \caption{
       Left: Eigenfunctions corresponding to the  eigenvalues, $\lambda_{10} = 0.6040+0.6393i$  and $\lambda_7 =0.7711 + 0.4587i $.
    Right: Eigenfunctions corresponding to the  eigenvalues, $\lambda_8 = 0.4961 + 0.7397i$  and $\lambda_4 =0.5335 + 0.7379i $,
    that are closest and next to closest to the edge of the upper arm of the essential spectrum.
    }
    \label{fig:EigenfunctionsCloseFar}
\end{figure}

To investigate the extent of the region in parameter space where stable
pulses exist, we performed three parameter continuation studies.  
In~\cite{shinglot2022continuous}, we reported on how the parameters in the saturable absorber affect the essential spectrum. Here we focus on the parameters in the fiber amplifier.
Starting from the system parameters given above, we first increased
the unsaturated gain from $g_0=6$ to $g_0=7$ in increments of 
$0.1$. During this parameter continuation the peak power 
of the pulse increased linearly from 382~W to 493~W and the 
root mean square (RMS) pulse width increased linearly 
from 95~fs to 108~fs. In the left panel of \cref{fig:ParContSpecA} 
we show the essential spectrum at the final value $g_0=7$. 
In general, the edge of the upper arm of the essential spectrum is located at 
$\lambda_+(0)$, where $\lambda_+(\omega)$ is given in \eqref{eq:EssSpec2}. 
In particular, $|\lambda_+(0)|$ is determined by the balance of saturable 
gain and loss in the system, and $\arg(\lambda_+(0))=\theta$ is the optimized phase angle in \eqref{eq:Poincare map functional}. 
Just as for the standard soliton, as the peak power of the pulse increases,
(due to the increase in $g_0$)
the angle, $\theta$, increases, rotating the upper arm of the essential spectrum
counter-clockwise. In addition, the four complex 
eigenvalues in the first quadrant rotate in the same direction, 
approximately maintaining their distance from the unit circle.  
Significantly, at $g_0=7$, there is a fifth eigenvalue located just above 
$\lambda_+(0)$. This eigenvalue  bifurcates
out of the edge of essential spectrum when $g_0=6.5$.
Finally, the phase-like eigenvalue moves slightly in and the
translation-like eigenvalue does not move.

Next, returning to the original set of parameters, we increased the saturation energy from $E_{\text{sat}}=200$~pJ to 
$E_{\text{sat}}=260$~pJ in increments of $5$~pJ.
The peak power 
of the pulse increased linearly from 382~W to 461~W and the 
RMS pulse width increased linearly 
from 95~fs to 104~fs. In the right panel of \cref{fig:ParContSpecA} 
we show the essential spectrum at $E_{\text{sat}}=260$~pJ. 
Qualitatively, the same changes occur in  the spectrum  as when 
we increased $g_0$, except that the 
amount of rotation is not quite as large since the final peak power is lower.

Finally, we increased the fiber amplifier bandwidth from
$\Omega_g=50$~THz to $\Omega_g=145$~THz in increments of $0.5$~THz
and then jumped to $\Omega_g=500$~THz. 
During this parameter continuation the peak power 
 decreased from 382~W to 379~W at $\Omega_g=70$~THz 
and then increased to 382~W at $\Omega_g=500$~THz.
The RMS pulse width increased  from 95~fs to 110~fs.
In the left panel of \cref{fig:ParContSpecB} 
we show the essential spectrum at $\Omega_g=500$~THz.
Although the peak power does not change much, the wider filter
still results in a more nonlinear system, which results in $\theta$ increasing from 
$56^\circ$ to $67^\circ$. In addition, the essential spectrum spirals
much more slowly into the origin (we only show the first few rotations in the red curves). We also see that the
translation-like eigenvalue on the real axis has moved out to $\lambda = 0.999$,
while the phase-like eigenvalue moves inwards slightly, crossing the expanding
essential spectrum curve.
Meanwhile, the four discrete eigenvalues in the first quadrant move outward
towards the unit circle, slowing down significantly once $\Omega_g > 140$~THz.
In the right panel, we see that once $\Omega_g = 65$~THz a fifth eigenvalue has bifurcated out of the essential spectrum.
In the left panel of \cref{fig:OmegaJob3}, we see this eigenvalue 
 emerging from the essential spectrum at $\Omega_g=60$~THz, slightly behind the edge.

 \begin{figure}[!t]
    \centering
    \includegraphics[width=0.49\linewidth]{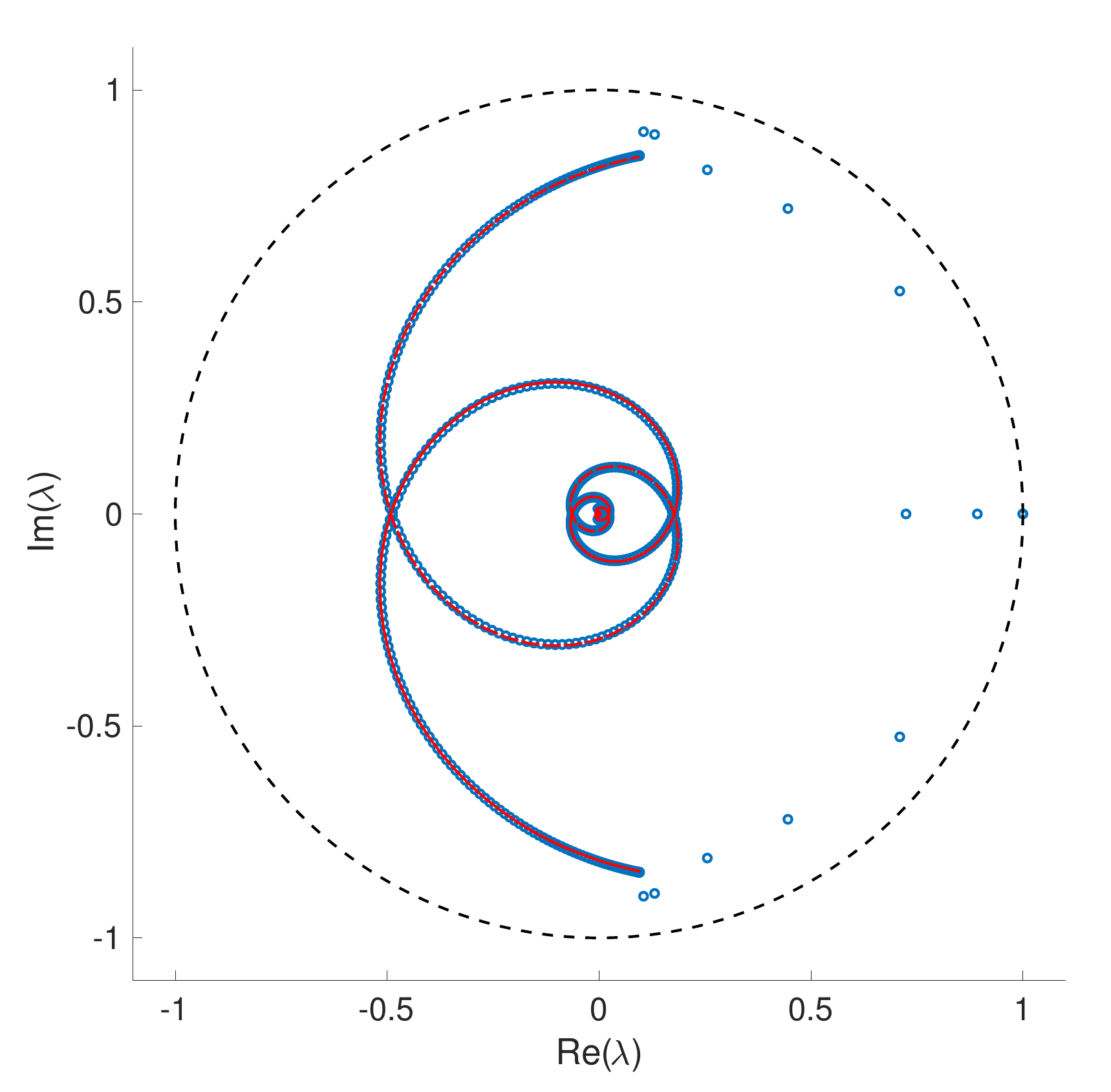}
    \includegraphics[width=0.49\linewidth]{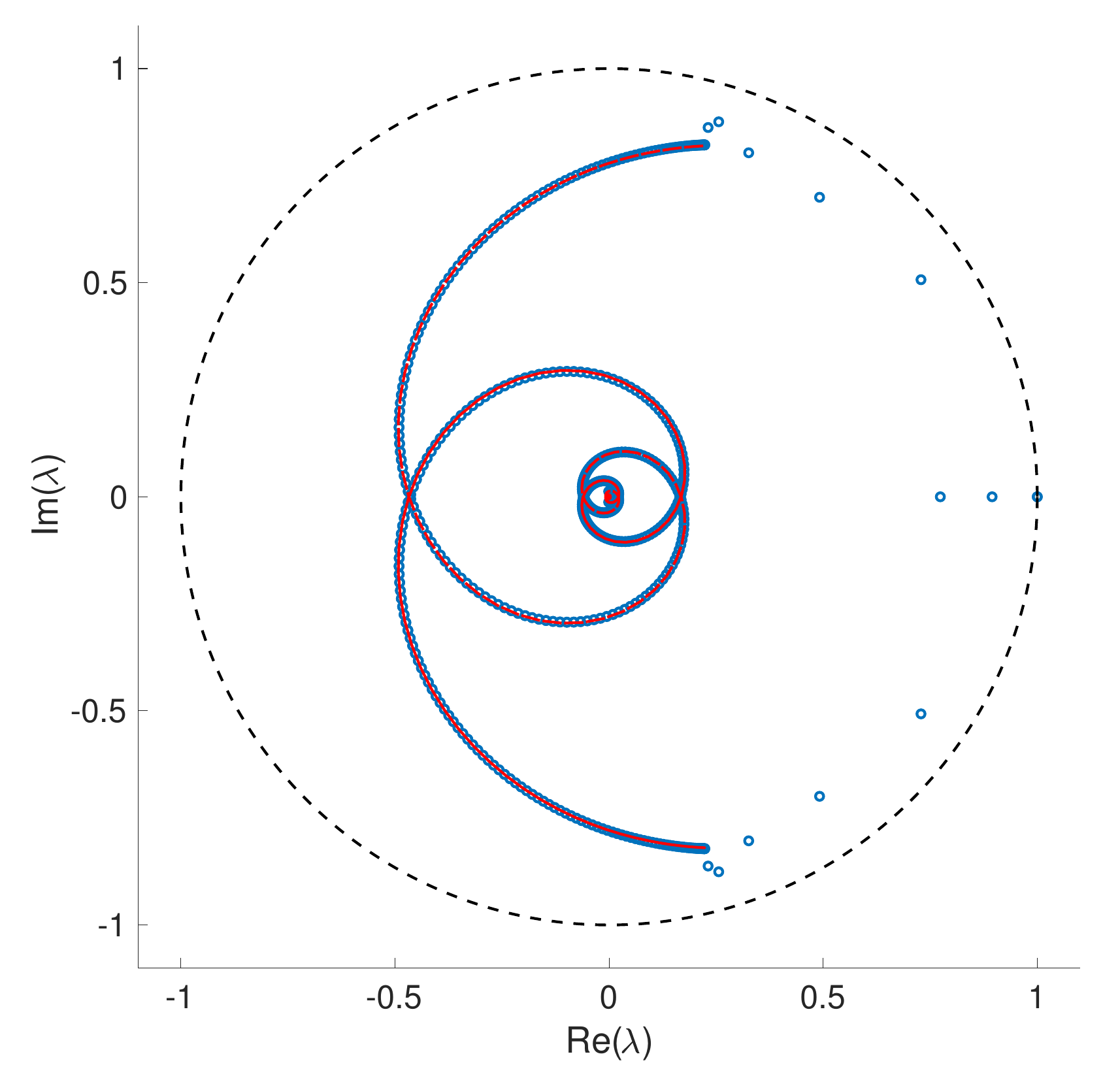}
    \caption{Spectra of the monodromy operator for $g_0=7$ (left) 
    and $E_{\text{sat}}=260$~pJ (right).}
    \label{fig:ParContSpecA}
\end{figure}

 \begin{figure}[!t]
    \centering
       \includegraphics[width=0.49\linewidth]{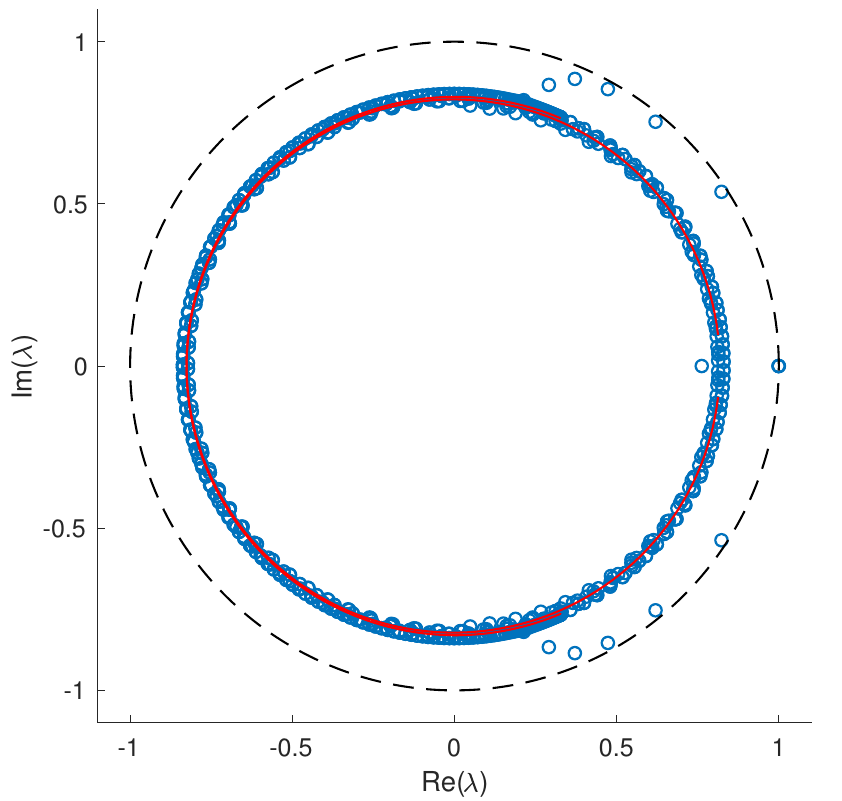}
             \includegraphics[width=0.49\linewidth]{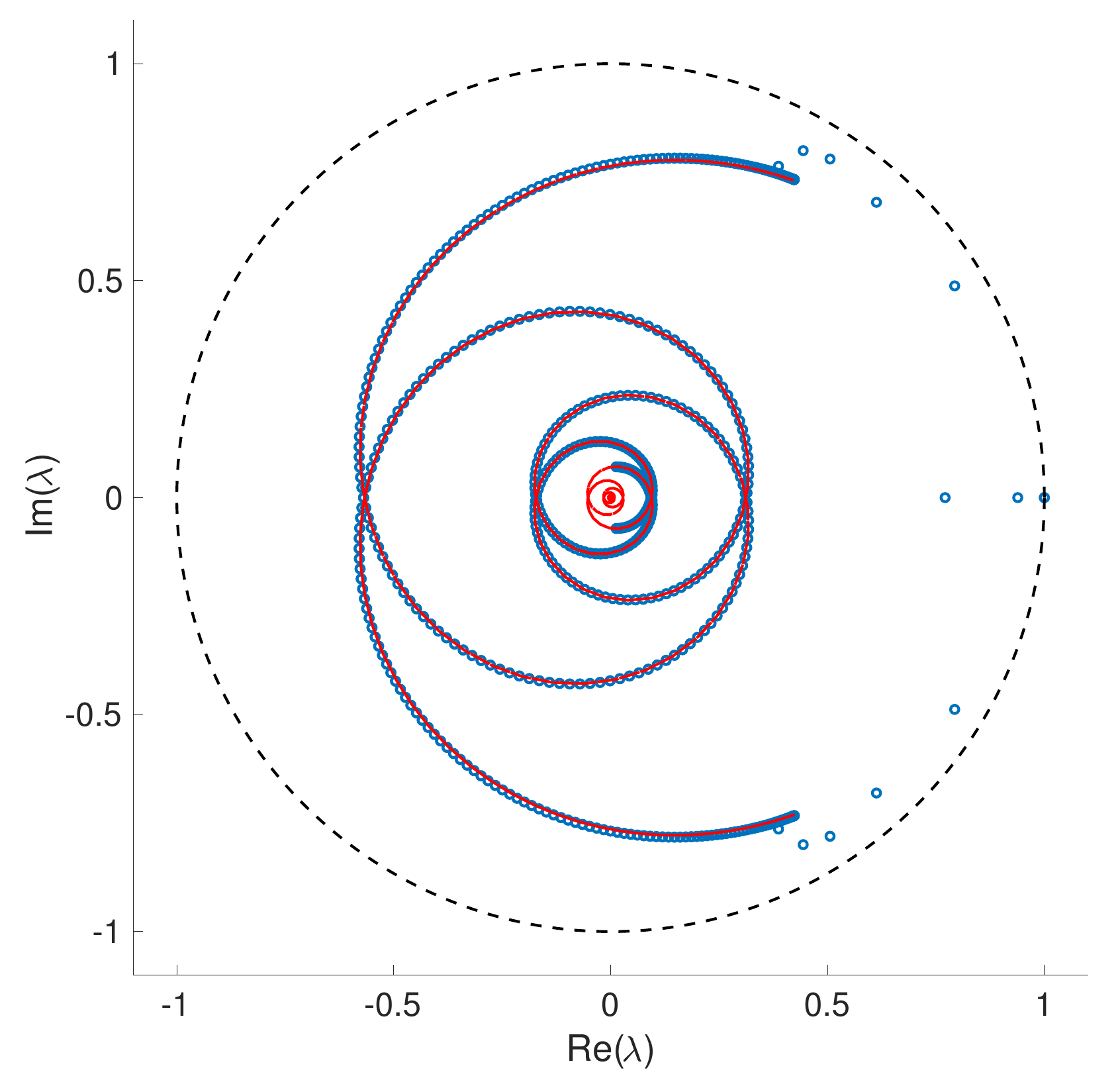}
    \caption{Spectra of the monodromy operator for $\Omega_g = 500$~THz (left) 
    and $\Omega_g = 65$~THz (right).}
    \label{fig:ParContSpecB}
\end{figure}

For stationary pulses, it is well known~\cite{barashenkov2000oscillatory,bridges2002stability} that there can be
significant errors when  the spectrum
of the linearized operator, $\mathcal L$, is approximated 
by the set of eigenvalues of a  matrix approximation, 
$\mathbf L$.
Specifically,  if an
 eigenfunction decays very slowly
there can be a large error in the
corresponding eigenvalue  due to windowing effects. 
This phenomenon only occurs for eigenvalues that are  sufficiently 
close to the essential spectrum.
In addition, the portion of the spectrum of $\mathbf L$
that corresponds to the essential spectrum may not agree with an
analytical formula for $\sigma_{\text{ess}}(\mathcal L)$.
For the eigenvalues of $\mathcal L$, the issue can be resolved by using computational Evans function
methods~\cite{bridges2002stability,SINUM53p2329} or by the iterative
solution of an appropriately formulated nonlinear eigenproblem~\cite{shen2016spectra,wang2014boundary}.
However, for periodically stationary pulses, even those obtained as solutions of
constant coefficient nonlinear wave equations, there is currently no 
numerical method for addressing this problem. 
Although it is no guarantee of accuracy, the best one can do is to double  the time window, $L$, and the number of points, $N$, and look for changes in  the location of the eigenvalues near the
essential spectrum and in the decay rates of the corresponding eigenfunctions.
Indeed, we verified that 
the location of the eigenvalue bifurcating out of the essential spectrum 
in the left panel of \cref{fig:OmegaJob3} does not change, and,
as we see in the right panel, neither does
the decay rate of the corresponding eigenfunction. 
In the left panel, we do however see some discrepancy between the 
analytic formula for the essential spectrum and its discrete approximation.
Similar differences occur near the edge of the essential spectrum for all
the simulations we performed. However, they are only evident on the larger
scale in the left panel of  \cref{fig:ParContSpecB}.
 
  \begin{figure}[!tbh]
    \centering
       \includegraphics[width=0.33\linewidth]{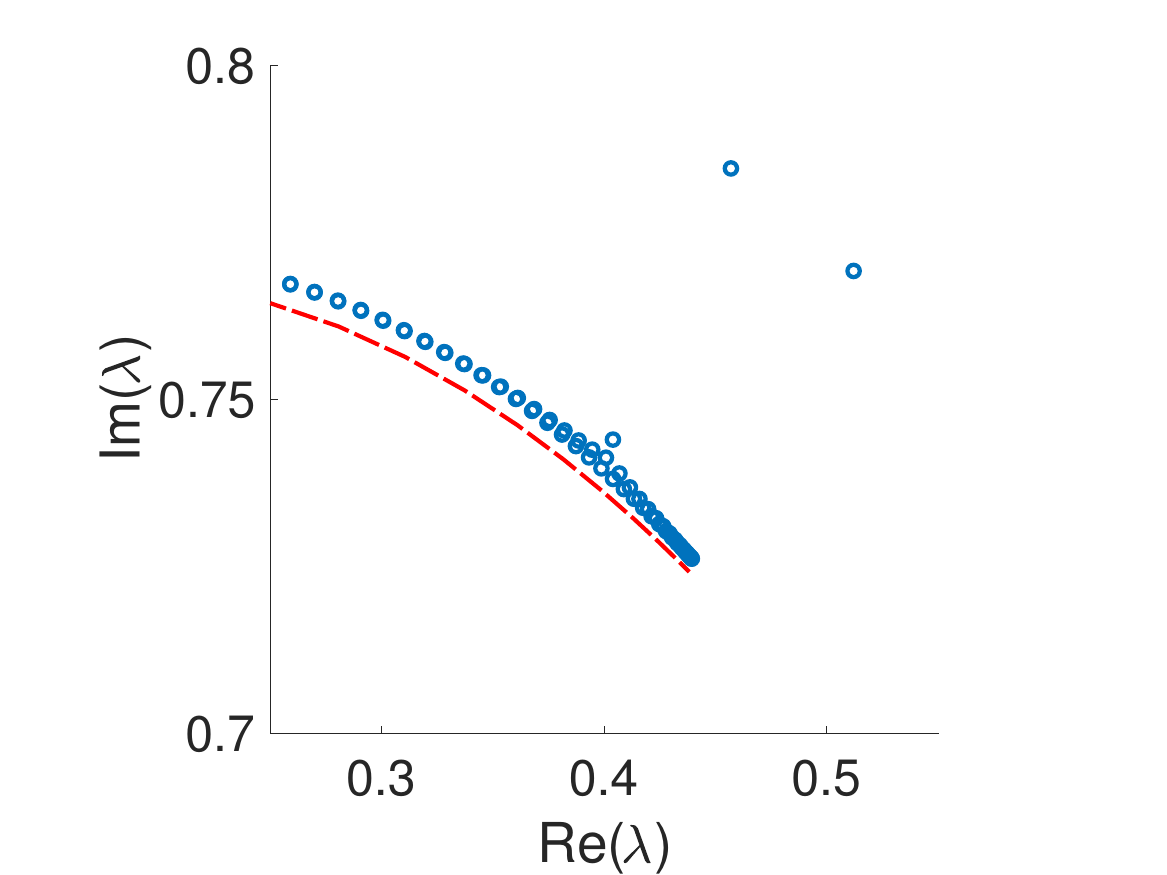}  
    \includegraphics[width=0.33\linewidth]{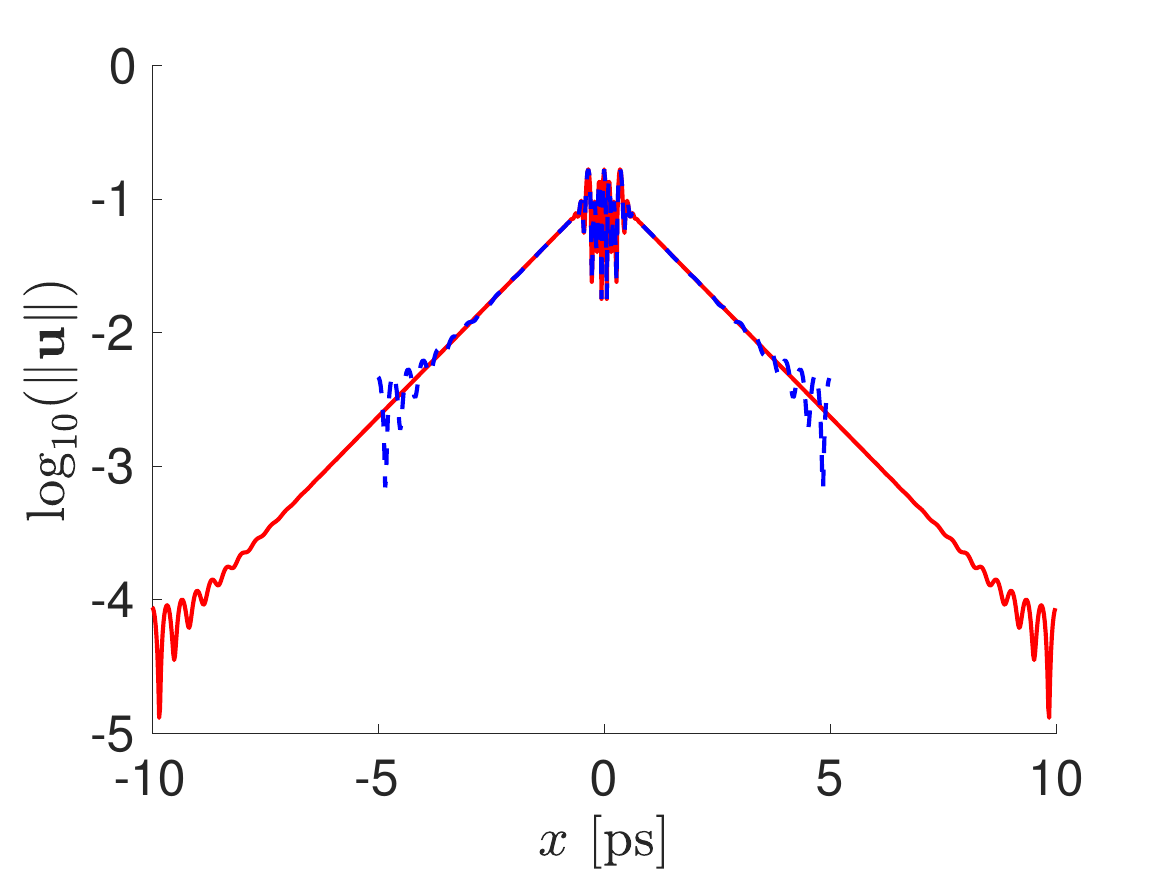}
    \caption{Left: Detail of the spectrum near the upper arm of the essential spectrum for $\Omega_g = 60~THz$. Right: Eigenfunction corresponding to
    the eigenvalue that bifurcates out of the essential spectrum on the left computed
    using $L=10$~ps, $N=512$ (blue dashed line) and $L=20$~ps, $N=1024$ (red solid line).}
    \label{fig:OmegaJob3}
\end{figure}

 \section{Conclusions}

 In this paper, we described and validated  accurate and efficient 
 computational methods 
 to discover periodically stationary pulses in a lumped model of a fiber laser
 and to assess their stability using the spectrum of  a monodromy operator. In particular, we demonstrated excellent agreement between the
 numerically computed spectrum on the one hand 
 and theoretical formulae for the essential
 spectrum and a multiplicity two eigenvalue on the other hand.  
Our simulations suggest that there is a large region in the parameter space 
of the fiber amplifier in which the Kim laser operates stably. An advantage
of the spectral approach to stability over the traditional evolution approach
used in the engineering community is that changes in the  spectrum 
can be used to predict the onset of an instability.
However, an unresolved theoretical problem is to establish a result relating spectral stability to linear stability in this context. 

To be useful for quantitative modeling of experimental lasers,
 the methods described here need to be extended to more realistic
 models of saturable absorbers (semiconductor saturable absorber mirrors)~\cite{menyuk2016spectral} and to erbium-doped fiber amplifiers modeled by multilevel rate equations~\cite{Giles1991ModelingEF}.
  In particular, we plan to apply our approach to the 
Mamyshev oscillator~\cite{rochette2008multiwavelength,sidorenko2018self,tarasov2016mode}, which has extremely large pulse variations
in which one half of the pulse is destroyed each round trip before being regenerated.
As is well appreciated by practitioners in the field, a major challenge of optimizing for  stationary and periodically stationary
pulses is the need for very good initial guesses. Further research on
parameter continuation methods  for pulse solutions of
nonlinear wave equations and lumped models is required to address this challenge~\cite{uecker2019hopf}. 
In addition, since the majority of computational time is devoted to computing
the monodromy matrix, it may prove advantageous to employ a
 matrix-free iterative method
to compute only the handful of eigenvalues that are not already identified by the
theory. 

A major challenge in the modeling of fiber lasers is to quantify the effects that 
quantum mechanical and technical noise sources have on the performance
of the system~\cite{paschotta2004noiseI,paschotta2004noiseII}. Traditionally this has been accomplished using theory for idealized models and highly
computationally intensive Monte Carlo simulations for more realistic ones. Building on classical
results of soliton perturbation theory, Menyuk has shown how to efficiently
quantify the system performance of stationary pulses in an averaged laser model 
by integration of the noise probability density function against numerically computed  eigenfunctions~\cite{menyuk2016spectral}. An important next step is to extend this approach to periodic stationary pulses.

\section*{Acknowledgments} We thanks  Y. Latushkin, J.  Marzuola, C.R.  Menyuk, and S. Wang for generously sharing their expertise with us.
The authors acknowledge computational resources provided by the Office of Information Technology Cyberinfrastructure Research Computing (CIRC) at The University of Texas at Dallas.
Finally, we thank the reviewers for their comments which improved the paper.

\bibliographystyle{siamplain}
\bibliography{VSTheory}

\begin{thebibliography}{10}

\bibitem{agrawal2006nonlinear}
{\sc G.~P. Agrawal}, {\em Nonlinear fiber optics}, Elsevier, 2006.

\bibitem{ambrose2012computing}
{\sc D.~M. Ambrose and J.~Wilkening}, {\em Computing time-periodic solutions of
  nonlinear systems of partial differential equations}, in Hyperbolic Problems:
  Theory, Numerics and Applications (In 2 Volumes), World Scientific, 2012,
  pp.~273--280.

\bibitem{bao2002time}
{\sc W.~Bao, S.~Jin, and P.~A. Markowich}, {\em On time-splitting spectral
  approximations for the {S}chr{\"o}dinger equation in the semiclassical
  regime}, Journal of Computational Physics, 175 (2002), pp.~487--524.

\bibitem{barashenkov2000oscillatory}
{\sc I.~Barashenkov and E.~Zemlyanaya}, {\em Oscillatory instabilities of gap
  solitons: a numerical study}, Computer Physics Communications, 126 (2000),
  pp.~22--27.

\bibitem{Boisgerault}
{\sc S.~Boisg\'erault}, {\em Complex analysis and applications}, 2019,
  \url{https://eul.ink/complex-analysis/}.
\newblock Accessed on May 25th 2023.

\bibitem{bridges2002stability}
{\sc T.~J. Bridges, G.~Derks, and G.~Gottwald}, {\em Stability and instability
  of solitary waves of the fifth-order {KdV} equation: a numerical framework},
  Physica D: Nonlinear Phenomena, 172 (2002), pp.~190--216.

\bibitem{cuevas2017floquet}
{\sc J.~Cuevas-Maraver, P.~G. Kevrekidis, D.~J. Frantzeskakis, N.~I.
  Karachalios, M.~Haragus, and G.~James}, {\em Floquet analysis of
  {K}uznetsov-{M}a breathers: {A} path towards spectral stability of rogue
  waves}, Physical Review E, 96 (2017), p.~012202.

\bibitem{diddams2010evolving}
{\sc S.~A. Diddams}, {\em The evolving optical frequency comb}, J. Opt. Soc.
  Am. B, 27 (2010), pp.~B51--B62.

\bibitem{fermann2000self}
{\sc M.~E. Fermann, V.~I. Kruglov, B.~C. Thomsen, J.~M. Dudley, and J.~D.
  Harvey}, {\em Self-similar propagation and amplification of parabolic pulses
  in optical fibers}, Phys. Rev. Lett., 84 (2000), pp.~6010--6013.

\bibitem{fornberg1981numerical}
{\sc B.~Fornberg}, {\em Numerical differentiation of analytic functions}, {ACM}
  {T}ransactions on {M}athematical {S}oftware, 7 (1981), pp.~512--526.

\bibitem{fu2018several}
{\sc W.~Fu, L.~G. Wright, P.~Sidorenko, S.~Backus, and F.~W. Wise}, {\em
  Several new directions for ultrafast fiber lasers}, Opt. Express, 26 (2018),
  pp.~9432--9463.

\bibitem{Giles1991ModelingEF}
{\sc R.~Giles and E.~Desurvire}, {\em Modeling erbium-doped fiber amplifiers},
  J. Lightwave Technol., 9 (1991), pp.~271--283.

\bibitem{hartl2007cavity}
{\sc I.~Hartl, T.~R. Schibli, A.~Marcinkevicius, D.~C. Yost, D.~D. Hudson,
  M.~E. Fermann, and J.~Ye}, {\em Cavity-enhanced similariton {Y}b-fiber laser
  frequency comb: $3\times 10^{14}$ {W}/cm$^2$ peak intensity at 136 {MH}z},
  Opt. Lett., 32 (2007), pp.~2870--2872.

\bibitem{haus1975theory}
{\sc H.~A. Haus}, {\em Theory of mode locking with a fast saturable absorber},
  Journal of Applied Physics, 46 (1975), pp.~3049--3058.

\bibitem{haus2000mode}
{\sc H.~A. Haus}, {\em Mode locking of lasers}, {IEEE} J.\ Sel.\ Top. Quantum\
  Electron., 6 (2000), pp.~1173--1185.

\bibitem{JQE29p983}
{\sc H.~A. Haus and A.~Mecozzi}, {\em Noise of mode-locked lasers}, J. Quantum
  Electron., 29 (1993), pp.~983--996.

\bibitem{SINUM53p2329}
{\sc J.~Humpherys and J.~Lytle}, {\em Root following in {Evans} function
  computation}, SIAM J. Numer. Anal., 53 (2015), pp.~2329--2346.

\bibitem{kapitula2004evans}
{\sc T.~Kapitula, N.~Kutz, and B.~Sandstede}, {\em The {E}vans function for
  nonlocal equations}, Indiana University Mathematics Journal, 53 (2004),
  pp.~1095--1126.

\bibitem{kapitula1998instability}
{\sc T.~Kapitula and B.~Sandstede}, {\em Instability mechanism for bright
  solitary-wave solutions to the cubic-quintic {G}inzburg-{L}andau equation},
  J. Opt. Soc. Am. B, 15 (1998), pp.~2757--2762.

\bibitem{kapitula1998stability}
{\sc T.~Kapitula and B.~Sandstede}, {\em Stability of bright solitary-wave
  solutions to perturbed nonlinear {S}chr{\"o}dinger equations}, Physica D:
  Nonlinear Phenomena, 124 (1998), pp.~58--103.

\bibitem{kaup1990perturbation}
{\sc D.~Kaup}, {\em Perturbation theory for solitons in optical fibers},
  Physical Review A, 42 (1990), pp.~5689--5694.

\bibitem{kevrekidis2009discrete}
{\sc P.~G. Kevrekidis}, {\em The discrete nonlinear Schr{\"o}dinger equation:
  mathematical analysis, numerical computations and physical perspectives},
  vol.~232, Springer Science \& Business Media, 2009.

\bibitem{kim2014sub}
{\sc H.~Kim, P.~Qin, Y.~Song, H.~Yang, J.~Shin, C.~Kim, K.~Jung, C.~Wang, and
  J.~Kim}, {\em Sub-20-attosecond timing jitter mode-locked fiber lasers}, IEEE
  J. Sel. Top. Quantum Electron, 20 (2014), pp.~260--267.

\bibitem{kutz2006mode}
{\sc J.~N. Kutz}, {\em Mode-locked soliton lasers}, SIAM review, 48 (2006),
  pp.~629--678.

\bibitem{lu2015strang}
{\sc J.~Lu and J.~Marzuola}, {\em Strang splitting methods for a quasilinear
  {S}chr{\"o}dinger equation: convergence, instability, and dynamics},
  Communications in Mathematical Sciences, 13 (2015), pp.~1051--1074.

\bibitem{lubich2008splitting}
{\sc C.~Lubich}, {\em On splitting methods for {S}chr{\"o}dinger-{P}oisson and
  cubic nonlinear {S}chr{\"o}dinger equations}, Mathematics of computation, 77
  (2008), pp.~2141--2153.

\bibitem{lunardi2012analytic}
{\sc A.~Lunardi}, {\em Analytic semigroups and optimal regularity in parabolic
  problems}, Springer Science \& Business Media, 2012.

\bibitem{lyness1967numerical}
{\sc J.~N. Lyness and C.~B. Moler}, {\em Numerical differentiation of analytic
  functions}, SIAM Journal on Numerical Analysis, 4 (1967), pp.~202--210.

\bibitem{martins2003complex}
{\sc J.~R. Martins, P.~Sturdza, and J.~J. Alonso}, {\em The complex-step
  derivative approximation}, {ACM} {T}ransactions on {M}athematical {S}oftware,
  29 (2003), pp.~245--262.

\bibitem{menyuk2016spectral}
{\sc C.~R. Menyuk and S.~Wang}, {\em Spectral methods for determining the
  stability and noise performance of passively modelocked lasers},
  Nanophotonics, 5 (2016), pp.~332--350.

\bibitem{mollenauer1984soliton}
{\sc L.~F. Mollenauer and R.~H. Stolen}, {\em The soliton laser}, Opt. Lett., 9
  (1984), pp.~13--15.

\bibitem{mu2000comparison}
{\sc R.-M. Mu, V.~Grigoryan, C.~R. Menyuk, G.~Carter, and J.~Jacob}, {\em
  Comparison of theory and experiment for dispersion-managed solitons in a
  recirculating fiber loop}, IEEE Journal of Selected Topics in Quantum
  Electronics, 6 (2000), pp.~248--257.

\bibitem{muslu2003split}
{\sc G.~Muslu and H.~Erbay}, {\em A split-step fourier method for the complex
  modified {K}orteweg-de {V}ries equation}, Computers \& Mathematics with
  Applications, 45 (2003), pp.~503--514.

\bibitem{paschotta2004noiseI}
{\sc R.~Paschotta}, {\em Noise of mode-locked lasers ({Part I}): {N}umerical
  model}, Appl. Phys. B: Lasers Opt., 79 (2004), pp.~153--162.

\bibitem{paschotta2004noiseII}
{\sc R.~Paschotta}, {\em Noise of mode-locked lasers ({Part II}): {T}iming
  jitter and other fluctuations}, Appl. Phys. B: Lasers Opt., 79 (2004),
  pp.~163--173.

\bibitem{rochette2008multiwavelength}
{\sc M.~Rochette, L.~R. Chen, K.~Sun, and J.~Hernandez-Cordero}, {\em
  Multiwavelength and tunable self-pulsating fiber cavity based on regenerative
  {SPM} spectral broadening and filtering}, IEEE Photonics Technology Letters,
  20 (2008), pp.~1497--1499.

\bibitem{shen2016spectra}
{\sc Y.~Shen, J.~Zweck, S.~Wang, and C.~R. Menyuk}, {\em Spectra of short pulse
  solutions of the cubic--quintic complex {G}inzburg--{L}andau equation near
  zero dispersion}, Stud. Appl. Math., 137 (2016), pp.~238--255.

\bibitem{shinglot2023essential}
{\sc V.~Shinglot and J.~Zweck}, {\em The essential spectrum of
  periodically-stationary pulses in lumped models of short-pulse fiber lasers},
  Studies in Applied Mathematics, 150 (2023), pp.~218--253.

\bibitem{shinglot2022continuous}
{\sc V.~Shinglot, J.~Zweck, and C.~Menyuk}, {\em The continuous spectrum of
  periodically stationary pulses in a stretched pulse laser}, Opt. Lett., 47
  (2022), pp.~1490--1493.

\bibitem{sidorenko2018self}
{\sc P.~Sidorenko, W.~Fu, L.~G. Wright, M.~Olivier, and F.~W. Wise}, {\em
  Self-seeded, multi-megawatt, {M}amyshev oscillator}, Opts. Express, 43
  (2018), pp.~2672--2675.

\bibitem{sinkin2003optimization}
{\sc O.~V. Sinkin, R.~Holzl{\"o}hner, J.~Zweck, and C.~R. Menyuk}, {\em
  Optimization of the split-step {F}ourier method in modeling optical-fiber
  communications systems}, J. Lightwave Technol., 21 (2003), p.~61.

\bibitem{tamura199377}
{\sc K.~Tamura, E.~P. Ippen, H.~A. Haus, and L.~E. Nelson}, {\em 77-fs pulse
  generation from a stretched-pulse mode-locked all-fiber ring laser}, Opt.
  Lett., 18 (1993), pp.~1080--1082.

\bibitem{tarasov2016mode}
{\sc N.~Tarasov, A.~M. Perego, D.~V. Churkin, K.~Staliunas, and S.~K.
  Turitsyn}, {\em Mode-locking via dissipative {F}araday instability}, Nature
  Communications, 7 (2016), pp.~1--5.

\bibitem{teschl2012ordinary}
{\sc G.~Teschl}, {\em Ordinary differential equations and dynamical systems},
  vol.~140, American Mathematical Soc., 2012.

\bibitem{turitsyn2012dispersion}
{\sc S.~K. Turitsyn, B.~G. Bale, and M.~P. Fedoruk}, {\em Dispersion-managed
  solitons in fibre systems and lasers}, Physics reports, 521 (2012),
  pp.~135--203.

\bibitem{uecker2019hopf}
{\sc H.~Uecker}, {\em Hopf bifurcation and time periodic orbits with
  \textsf{pde2path} -- {A}lgorithms and applications}, Commun. Comput. Phys.,
  25 (2019), pp.~812--852.

\bibitem{wang2013comparison}
{\sc S.~Wang, A.~Docherty, B.~S. Marks, and C.~R. Menyuk}, {\em Comparison of
  numerical methods for modeling laser mode locking with saturable gain}, J.
  Opt. Soc. Am. B, 30 (2013), pp.~3064--3074.

\bibitem{wang2014boundary}
{\sc S.~Wang, A.~Docherty, B.~S. Marks, and C.~R. Menyuk}, {\em Boundary
  tracking algorithms for determining the stability of mode-locked pulses}, J.
  Opt. Soc. Am. B, 31 (2014), pp.~2914--2930.

\bibitem{wang2016comparison}
{\sc S.~Wang, B.~S. Marks, and C.~R. Menyuk}, {\em Comparison of models of fast
  saturable absorption in passively modelocked lasers}, Opts. Express, 24
  (2016), pp.~20228--20244.

\bibitem{weideman1986split}
{\sc J.~Weideman and B.~Herbst}, {\em Split-step methods for the solution of
  the nonlinear schr{\"o}dinger equation}, SIAM Journal on Numerical Analysis,
  23 (1986), pp.~485--507.

\bibitem{wright2006numerical}
{\sc S.~Wright and J.~Nocedal}, {\em Numerical optimization}, Springer Science
  + BusinessMedia, New York, N.Y., 2006.

\bibitem{yang2010nonlinear}
{\sc J.~Yang}, {\em Nonlinear waves in integrable and nonintegrable systems},
  SIAM, 2010.

\bibitem{zweck2021essential}
{\sc J.~Zweck, Y.~Latushkin, J.~Marzuola, and C.~Jones}, {\em The essential
  spectrum of periodically stationary solutions of the complex
  {G}inzburg-{L}andau equation}, J. Evol. Equ., 21 (2021), pp.~3313--3329.

\end{thebibliography}
\end{document}